\numberwithin{equation}{section}
\theoremstyle{plain}
\newtheorem{theorem}{Theorem}[section]
\newtheorem{corollary}[theorem]{Corollary}
\newtheorem{lemma}[theorem]{Lemma}
\newtheorem{proposition}[theorem]{Proposition}
\theoremstyle{definition}
\newtheorem{definition}[theorem]{Definition}
\newtheorem{remark}[theorem]{Remark}
\theoremstyle{remark}
\newenvironment{statement}
{	\itshape
	\vspace{1mm}
	\begin{adjustwidth}{5mm}{5mm}
		\begin{center}
}
{ 
		\end{center}
	\end{adjustwidth}
	\vspace{1mm}
}
\newcommand{\N}{\mathbb{N}}
\newcommand{\Z}{\mathbb{Z}}
\newcommand{\R}{\mathbb{R}}
\newcommand{\RP}{{\mathbb{R}\mathrm{P}}}
\newcommand{\join}{\mathrm{join}}
\DeclareMathOperator{\cro}{cr} 
\newcommand{\pah}[1]{\raisebox{0pt}[0pt][0pt]{$#1^\vee_{\scalebox{0.6}[1]{$\bm{-}$}}$}}
\newcommand{\pav}[1]{#1^\vee_{\,\bm{\shortmid}}}
\newcommand{\pa}[1]{#1^\vee}
\newcommand{\p}{\varphi}
\renewcommand{\a}{\alpha}
\renewcommand{\b}{\beta}
\DeclareMathOperator{\North}{
	\begin{tikzpicture}[scale=1.4, line cap=round]%
		\draw (0,0) -- (0,1ex) -- (1ex,1ex) -- (1ex,0) -- (0,0);
		\fill (0ex,0.5ex) rectangle ++(1ex,0.5ex);
	\end{tikzpicture}\kern-1pt
}
\DeclareMathOperator{\East}{
	\begin{tikzpicture}[scale=1.4, line cap=round]
		\draw (0,0) -- (0,1ex) -- (1ex,1ex) -- (1ex,0) -- (0,0);
		\fill (0.5ex,0ex) rectangle ++(0.5ex,1ex);
	\end{tikzpicture}\kern-1pt
}
\DeclareMathOperator{\South}{
	\begin{tikzpicture}[scale=1.4, line cap=round]
		\draw (0,0) -- (0,1ex) -- (1ex,1ex) -- (1ex,0) -- (0,0);
		\fill (0ex,0ex) rectangle ++(1ex,0.5ex);
	\end{tikzpicture}\kern-1pt
}
\DeclareMathOperator{\West}{
	\begin{tikzpicture}[scale=1.4, line cap=round]
		\draw (0,0) -- (0,1ex) -- (1ex,1ex) -- (1ex,0) -- (0,0);
		\fill (0ex,0ex) rectangle ++(0.5ex,1ex);
	\end{tikzpicture}\kern-1pt
}
\DeclareMathOperator{\NE}{
	\begin{tikzpicture}[scale=1.4, line cap=round]
		\draw (0,0) -- (0,1ex) -- (1ex,1ex) -- (1ex,0) -- (0,0);
		\fill (0.5ex,0.5ex) rectangle ++(0.5ex,0.5ex);
	\end{tikzpicture}\kern-1pt
}
\DeclareMathOperator{\SE}{
	\begin{tikzpicture}[scale=1.4, line cap=round]
		\draw (0,0) -- (0,1ex) -- (1ex,1ex) -- (1ex,0) -- (0,0);
		\fill (0.5ex,0) rectangle ++(0.5ex,0.5ex);
	\end{tikzpicture}\kern-1pt
}
\DeclareMathOperator{\SW}{
	\begin{tikzpicture}[scale=1.4, line cap=round]
		\draw (0,0) -- (0,1ex) -- (1ex,1ex) -- (1ex,0) -- (0,0);
		\fill (0ex,0ex) rectangle ++(0.5ex,0.5ex);
	\end{tikzpicture}\kern-1pt
}
\DeclareMathOperator{\NW}{
\begin{tikzpicture}[scale=1.4, line cap=round]
	\draw (0,0) -- (0,1ex) -- (1ex,1ex) -- (1ex,0) -- (0,0);
	\fill (0ex,0.5ex) rectangle ++(0.5ex,0.5ex);
\end{tikzpicture}\kern-1pt
}
\DeclareMathOperator{\WEmid}{
	\begin{tikzpicture}[scale=1.4, line cap=round]
		\draw (0,0) -- (0,1ex) -- (1ex,1ex) -- (1ex,0) -- (0,0);
		\fill (0.25ex,0ex) rectangle ++(0.5ex,1ex);
	\end{tikzpicture}\kern-1pt
}
\DeclareMathOperator{\NSmid}{
	\begin{tikzpicture}[scale=1.4, line cap=round]
		\draw (0,0) -- (0,1ex) -- (1ex,1ex) -- (1ex,0) -- (0,0);
		\fill (0ex,0.25ex) rectangle ++(1ex,0.5ex);
	\end{tikzpicture}\kern-1pt
}
\DeclareMathOperator{\WE}{
\begin{tikzpicture}[scale=1.4, line cap=round]
	\draw (0,0) -- (0,1ex);%
	\draw (0,1ex) -- (1ex,1ex);%
	\draw (1ex,1ex) -- (1ex,0);%
	\draw (1ex,0) -- (0,0);%
	\draw (0.5ex,0) -- (0.5ex,1ex);%
\end{tikzpicture}\kern-1pt
}
\DeclareMathOperator{\NS}{
\begin{tikzpicture}[scale=1.4, line cap=round]
	\draw (0,0) -- (0,1ex);%
	\draw (0,1ex) -- (1ex,1ex);%
	\draw (1ex,1ex) -- (1ex,0);%
	\draw (1ex,0) -- (0,0);%
	\draw (0ex,0.5ex) -- (1ex,0.5ex);%
\end{tikzpicture}\kern-1pt
}
\tikzstyle{bvert}=[draw,circle,fill=black,minimum size=5pt,inner sep=0pt]
\tikzstyle{blvert}=[draw,circle,fill=blue,draw=blue,minimum size=5pt,inner sep=0pt]
\tikzstyle{wvert}=[draw,circle,fill=white,minimum size=5pt,inner sep=0pt]
\tikzstyle{blwvert}=[draw,circle,draw=blue,fill=white,minimum size=5pt,inner sep=0pt]
\tikzset{circle through 3 points/.style n args={3}{%
insert path={let    \p1=($(#1)!0.5!(#2)$),
                    \p2=($(#1)!0.5!(#3)$),
                    \p3=($(#1)!0.5!(#2)!1!-90:(#2)$),
                    \p4=($(#1)!0.5!(#3)!1!90:(#3)$),
                    \p5=(intersection of \p1--\p3 and \p2--\p4)
                    in
                 node at (\p5) [draw,circle through= {(#1)}]{}}
}}
\title{Discrete Kœnigs nets and finite Laplace sequences}
\author{Niklas Christoph Affolter
     \thanks{TU Wien, Institute of Discrete Mathematics and Geometry, Wiedener Hauptstr. 8-10/104, A-1040 Vienna, Austria.
       \textit{E-mail address}: \texttt{affolter@posteo.net}} \footnotemark[2]\ ,
      Alexander Yves Fairley
     \thanks{TU Berlin, Institute of Mathematics, Strasse des 17. Juni 136, 10623 Berlin, Germany.
	\textit{E-mail address}: \texttt{fairley@tu-berlin.de}} ,
}
\date{August 4, 2025} 
\begin{document}

\maketitle

\begin{abstract}
	Q-nets are maps from the square grid to projective space that have planar faces. 
	We consider the \emph{Laplace sequences} of Q-nets, which are determined by iterating a discrete time dynamics called \emph{Laplace transformations}. In general, the Laplace sequences are bi-infinite. However, there are special cases in which a Laplace transform degenerates to a curve. In these cases we say that the sequences terminates. In this paper, we consider two special cases of Q-nets which are both called \emph{(discrete) Kœnigs nets}. For these Kœnigs nets we show that if the sequence terminates, then the sequence is finite. More specifically, we show that if the Laplace transform is Laplace degenerate (or Goursat degenerate) after $m$ steps in one direction, then it is Laplace degenerate after $m+1$ (or $m+2$) steps in the other direction. 
\end{abstract}

\setcounter{tocdepth}{1}
\tableofcontents

\newpage

\section{Introduction}

Consider a \emph{Q-net}, which is a map $P\colon \Z^2 \rightarrow \RP^n$ such that the image of each unit-square is contained in a plane \cite{sauer1970differenzen, BS2008DDGbook}. Given a Q-net $P$ we define the \emph{(forwards) Laplace transform} \cite{doliwa1997geometricToda}
\begin{align}
	\mathcal{L}_{+}P(i,j) := (P(i,j)\vee P(i,j+1))\cap (P(i+1,j)\vee P(i+1,j+1)),
\end{align}
and the \emph{(backwards) Laplace transform}
\begin{align}
	\mathcal{L}_{-}P(i,j):= (P(i,j) \vee P(i+1,j))\cap (P(i,j+1) \vee P(i+1,j+1)).	
\end{align}
It turns out that both Laplace transforms are Q-nets. Thus, Laplace transformations can be iterated. In fact, $\mathcal{L}_+$ and $\mathcal{L}_-$ are mutually inverse transformations up to an index shift. Therefore, the iterated Laplace transforms $\mathcal{L}_{+}^{m}P$ and $\mathcal{L}_{-}^{m}P$ constitute a sequence called the \emph{Laplace sequence} of $P$ \cite{doliwa1997geometricToda}.
However, there are special cases when $\mathcal{L}_{+}^{m}P$ degenerates to a discrete curve for some $m \in \N$. In this case, we say that the Laplace sequence \emph{terminates} because $\mathcal{L}_{+}^{m+1}P$ is not well-defined. Sometimes the Laplace sequence terminates in both directions, in which case the sequence is \emph{finite}.

In this paper, we study terminating Laplace sequences of a special class of Q-nets, called \emph{(discrete) Kœnigs nets} \cite{BS2009Koenigsnets}.
Let us give an elementary characterization of Kœnigs nets. For this, note that each quad of a Q-net has a diagonal intersection point. 
The \emph{diagonal intersection net} of a Q-net consists of the diagonal intersection points. A Kœnigs net is a Q-net  such that the diagonal intersection net is also a Q-net. There are other algebraic and geometric characterizations of Kœnigs nets, see \cite{BS2008DDGbook}. Since Kœnigs nets are special cases of Q-nets, we may study the special properties of the Laplace sequences of Kœnigs nets. In particular, we prove the following new result.
\begin{statement}
	If the Laplace sequence of a Kœnigs net terminates, then the sequence is finite.
\end{statement}
Additionally, we are able to quantify this statement: if the Laplace sequence terminates after $m$ steps in one direction, then it terminates after at most $m+2$ steps in the other direction. We also prove the statement for a second type of Kœnigs nets, which was introduced in \cite{doliwa2003}. To distinguish the two types, we will refer to the first type as \emph{BS-Kœnigs net} and to the second type as \emph{D-Kœnigs net} in the bulk of the paper. Essentially, a Q-net is a D-Kœnigs net if it is the diagonal intersection net of a Q-net.

There are only two other cases where it is known that a terminating Laplace sequence must be finite. The first case deals with Q-nets in quadrics \cite{bobenkofairley2023circularnets}, which also covers circular Q-nets. In fact, we obtained our results by investigating local lifts of Kœnigs nets to degenerate quadrics and then adapting the techniques in \cite{bobenkofairley2023circularnets}. The other case is the \emph{pentagram map} \cite{schwartzaxisaligned} and related maps \cite{gdevron, yao, adtmdskpgeometry}. The common property of these maps is that they can be viewed as Laplace sequences of Q-nets with additional \emph{periodicity constraints} on the Q-net.

We believe our results are interesting purely from the viewpoint of discrete geometry. However, there is additional motivation coming from \emph{discrete differential geometry}, which is a branch of pure mathematics that studies structure-preserving discretizations of differential geometry, see \cite{BS2008DDGbook} for an introduction. In particular, (discrete) Kœnigs nets were introduced as discretizations of smooth Kœnigs nets, and they have many analogous properties. Indeed, although our results are new in the discrete setting, analogous results are known in the smooth theory \cite{Tzitzeica1924geometrie, darboux1915lecons}. Thus, our results give further evidence that the two discretizations of Kœnigs nets are appropriate - since they also behave analogously to smooth Kœnigs nets with respect to their Laplace sequences. 

That said, there is an interesting difference between the discrete and the smooth theory: if the Laplace sequence of a discrete Kœnigs net terminates in one direction, then in the other direction it generally terminates one step later than in the smooth theory. We propose two ways to resolve this difference. First, we prove the existence of special discrete Kœnigs nets for which terminations occurs after the same number of steps as in the smooth theory (and we show how to construct such Kœnigs nets). 
The second way is a change of perspective: we show that each discrete Kœnigs net belongs to a suitable pair of discrete Kœnigs nets (a special case of \emph{Kœnigs binets}, which are the subject of an upcoming publication \cite{adtbinets}). Viewed as pairs, the terminations occur after the same number of steps as in the smooth theory.

We believe it may be interesting to investigate how our results are affected if one considers the special case of \emph{discrete isothermic surfaces} \cite{bpdisosurfaces}, which are circular Kœnigs nets \cite{BS2008DDGbook}. Moreover, it is remarkable that there is a similar difference between the discrete and smooth theories for the Laplace sequences of circular nets with spherical parameter lines, see \cite{bobenkofairley2023circularnets}. Additionally, there has been renewed interest concerning (discrete and smooth) isothermic surfaces with spherical (or planar) parameter lines \cite{hoffmannSzewieczek2024isothermic, bobenko2023isothermic}. Further interesting examples of Q-nets with terminating Laplace sequences have recently appeared in \cite{KMT2023conenets, ppscone}. Another article by the authors concerning Kœnigs nets with constrained parameter spaces and finite Laplace sequences is to appear soon \cite{afconstrained}.

\subsection*{Plan of the paper}

In Section~\ref{sec:basics} we give a short review of the necessary fundamentals of projective geometry. In Section~\ref{sec:results} we give a summary of the main definitions and results. In Section~\ref{section:liftsofQnets} we develop a technique to lift Q-nets into maximal dimensions. This technique is crucial for the later proofs of the main results. The technique also allows us to make genericity assumptions precise. In Section~\ref{sec:laplaceseq} we collect general statements about Laplace invariants and Laplace sequences. In Section~\ref{sec:terminating} we prove our main results concerning finite Laplace sequences. In Section~\ref{sec:symterminating} we show how to construct Kœnigs nets with Laplace sequences that terminate after the same number of steps in both directions. Finally, in Section~\ref{sec:outlook} we give a few concluding remarks connecting our results to the literature. 
\subsection*{Acknowledgments}

N.~C.~Affolter and A.~Y.~Fairley were supported by the Deutsche Forschungsgemeinschaft (DFG) Collaborative Research Center TRR 109 ``Discretization in Geometry and Dynamics''. We would like to thank Christian Müller and Jan Techter for invaluable discussions.

\section{Geometry of $\RP^n$}\label{sec:basics}

We briefly review some basic notions of projective geometry necessary for the understanding of this paper, see also \cite{casas2014} for more background. 

For $x,y \in \R^{n+1} \setminus \{0\}$ we use the equivalence relation $x \sim y$ if and only if $x= \lambda y$ for some non-zero $\lambda \in \R$. We define \emph{projective space} $\RP^n$ as
\begin{align}
	\RP^n = (\R^{n+1} \setminus \{0\})\ /\ {\sim} = \{ [x] \ | \ x \in \R^{n+1} \setminus \{0\} \}.
\end{align}
For a point $[x] \in \RP^n$ we call $x \in \R^{n+1}\setminus \{0\}$ a \emph{representative vector} of $[x]$. Every $d$-dimensional projective subspace $A \subset \RP^n$ is the projectivization of a $(d+1)$-dimensional linear subspace $a \subset \R^{n+1}$, that is
\begin{align}
	A = [a] = \{[x] \mid x \in a \setminus \{0\}\}.
\end{align}
For two projective subspaces $A = [a]$, $B= [b]$ the \emph{join} $A \vee B$ is
\begin{align}
	A \vee B = [ \mbox{span}\{a,b\}].
\end{align}
Two projective subspaces $A, B \subset \RP^n$ are called \emph{supplementary} if
\begin{align}
	A \cap B = \emptyset \quad \mbox{and} \quad A \vee B = \RP^n.
\end{align}
Given two supplementary subspaces  $E, C \subset \RP^n$ the \emph{(central) projection} with center $C$ onto $E$ is the map 
\begin{align}
	\pi\colon \RP^n \setminus C \to E, \quad P \mapsto (P \vee C) \cap E.
\end{align}
For four points $P_1,P_2,P_3,P_4 \in \RP^1$ given by representative vectors $p_1,p_2,p_3,p_4\in \R^2$, the \emph{cross-ratio} is given by
\begin{align}
	\cro(P_1,P_2,P_3,P_4) = \frac{\det(p_1, p_2) \det(p_3, p_4)}{\det(p_2, p_3) \det(p_4,p_1)}.
\end{align}
The cross-ratio is independent of the choice of representative vectors, invariant under projective transformations, and invariant under central projections. For four points on a line $\RP^1 \subset \RP^n$ the cross-ratio is defined analogously by restricting to $\RP^1$.

Given a (non-trivial) symmetric bilinear form $\p \colon \R^{n+1} \times \R^{n+1} \to \R$, the associated \emph{quadric} is the set
\begin{align}
	\mathcal{Q} := \{[x] \in \RP^n \mid \p(x,x) = 0\}.
\end{align}
Non-zero scalar multiples of $\p$ define the same quadric $\mathcal Q$. Two points $[x], [y] \in \RP^n$ are \emph{conjugate} relative to $\mathcal{Q}$ if $\p(x,y) = 0$. The \emph{polar} of a point $P = [p] \in \RP^n$ with respect to $\mathcal{Q}$ is
\begin{align}
	P^\perp := \{[x] \in \RP^n \mid \p (p,x) = 0\}.
\end{align}
A point $P$ in $\mathcal{Q}$ is \emph{singular} if $P^\perp = \RP^n$. A non-empty quadric is \emph{non-degenerate} if it has no singular points. A projective subspace $A \subset \RP^n$ is an \emph{isotropic} subspace of a quadric $\mathcal{Q}$ if $A \subset \mathcal{Q}$.

\section{Definitions and results}\label{sec:results}

Throughout, we let $m \in \N$ be a positive integer and for $a,b \in \N$ we write
\begin{align}
	\Sigma_m := \{0,1, \ldots, m\}, \quad  \Sigma_{a,b} := \Sigma_a \times \Sigma_b.
\end{align}
We write $\Sigma$ when a statement makes sense for both the finite and the infinite case, that is $\Sigma$ represents $\Sigma_{a,b}$ or $\Z^2$. Thus, Q-nets are defined with the domain $\Sigma$.

\begin{definition}
	A \emph{Q-net} is a map $\Sigma \rightarrow \RP^n$ such that the image of each face is contained in a plane.
\end{definition}
We are mostly interested in generic Q-nets in the sense of the following definition.
\begin{definition}\label{def:nondegqnet}
	A \emph{non-degenerate} Q-net $P: \Sigma \rightarrow \RP^n$ is a Q-net such that
	\begin{enumerate}
		\item for every edge the two corresponding points do not coincide, and
		\item for any three vertices of any face the corresponding points of $P$ span a plane. 
	\end{enumerate}
\end{definition}
Note that the second condition implies the first condition unless the domain of $P$ is $\Sigma_{a,0}$ or $\Sigma_{0,b}$. Next, we are interested in a type of discrete-time dynamics for Q-nets, given by iterating \emph{Laplace transformations}.
Laplace transformations of Q-nets were introduced by Doliwa \cite{doliwa1997geometricToda} as follows.
\begin{figure}
	\centering
	\begin{tikzpicture}[line cap=round,line join=round,>=triangle 45,x=1.0cm,y=1.0cm,scale=0.8]
		\definecolor{qqwuqq}{rgb}{0.,0.39215686274509803,0.}
		\clip(-2.06,-2.3) rectangle (12.42,5.56);
		\draw [line width=1pt,densely dashed] (5.64,2.46)-- (-1.62,-1.);
		\draw [line width=1pt,densely dashed] (-1.62,-1.)-- (5.5,5.16);
		\draw [line width=1pt,densely dashed] (5.5,5.16)-- (5.64,2.46);
		\draw [line width=1pt,densely dashed] (5.64,2.46)-- (12.16,-1.48);
		\draw [line width=1pt,densely dashed] (12.16,-1.48)-- (5.5,5.16);
		\draw [line width=1pt,densely dashed] (5.64,2.46)-- (5.3,0.74);
		\draw [line width=1pt,densely dashed] (5.3,0.74)-- (0.5278102388403072,0.023612042202129757);
		\draw [line width=1pt,densely dashed] (5.3,0.74)-- (10.566791115099843,-0.5172326677137082);
		\draw [line width=1pt,densely dashed] (3.749048234236283,3.6451316183842)-- (3.905296931048227,1.6332682343563176);
		\draw [line width=1pt,densely dashed] (3.905296931048227,1.6332682343563176)-- (3.8914355385497297,0.5285501900662476);
		\draw [line width=1pt,densely dashed] (7.633390140055851,0.182997786225348)-- (7.151434597262734,1.546648418218532);
		\draw [line width=1pt,densely dashed] (7.151434597262734,1.546648418218532)-- (7.819811895183212,2.847154506904426);
		\draw [line width=1.5pt,color=qqwuqq] (0.5278102388403072,0.023612042202129757)-- (-1.62,-1.);
		\draw [line width=1.5pt,color=qqwuqq] (-1.62,-1.)-- (12.16,-1.48);
		\draw [line width=1.5pt,color=qqwuqq] (12.16,-1.48)-- (10.566791115099843,-0.5172326677137082);
		\draw [line width=1.5pt,color=qqwuqq] (10.566791115099843,-0.5172326677137082)-- (0.5278102388403072,0.023612042202129757);
		\draw [line width=1.5pt] (3.749048234236283,3.6451316183842)-- (5.5,5.16);
		\draw [line width=1.5pt] (5.5,5.16)-- (5.64,2.46);
		\draw [line width=1.5pt] (5.64,2.46)-- (3.905296931048227,1.6332682343563176);
		\draw [line width=1.5pt] (3.905296931048227,1.6332682343563176)-- (3.749048234236283,3.6451316183842);
		\draw [line width=1.5pt] (3.905296931048227,1.6332682343563176)-- (3.8914355385497297,0.5285501900662476);
		\draw [line width=1.5pt] (3.8914355385497297,0.5285501900662476)-- (5.3,0.74);
		\draw [line width=1.5pt] (5.3,0.74)-- (5.64,2.46);
		\draw [line width=1.5pt] (5.3,0.74)-- (7.633390140055851,0.182997786225348);
		\draw [line width=1.5pt] (7.633390140055851,0.182997786225348)-- (7.151434597262734,1.546648418218532);
		\draw [line width=1.5pt] (7.151434597262734,1.546648418218532)-- (5.64,2.46);
		\draw [line width=1.5pt] (5.5,5.16)-- (7.819811895183212,2.847154506904426);
		\draw [line width=1.5pt] (7.819811895183212,2.847154506904426)-- (7.151434597262734,1.546648418218532);
		\begin{small}
			\draw [fill=black] (5.64,2.46) circle (2.5pt);
			\draw[color=black] (6.4,2.73) node {$P(1,1)$};
			\draw [fill=qqwuqq] (-1.62,-1.) circle (2.5pt);
			\draw[color=qqwuqq] (-1.0,-1.4) node {$P_{-1}(0,1)$};
			\draw [fill=black] (5.5,5.16) circle (2.5pt);
			\draw[color=black] (6.5,5.17) node {$P(1,2)$};
			\draw [fill=qqwuqq] (12.16,-1.48) circle (2.5pt);
			\draw[color=qqwuqq] (11.29,-1.76) node {$P_{-1}(1,1)$};
			\draw [fill=black] (5.3,0.74) circle (2.5pt);
			\draw[color=black] (6.2,0.9) node {$P(1,0)$};
			\draw [fill=qqwuqq] (0.5278102388403072,0.023612042202129757) circle (2.5pt);
			\draw[color=qqwuqq] (1.0,-0.4) node {$P_{-1}(0,0)$};
			\draw [fill=qqwuqq] (10.566791115099843,-0.5172326677137082) circle (2.5pt);
			\draw[color=qqwuqq] (9.67,-0.8) node {$P_{-1}(1,0)$};
			\draw [fill=black] (3.749048234236283,3.6451316183842) circle (2.5pt);
			\draw[color=black] (2.9,3.83) node {$P(0,2)$};
			\draw [fill=black] (3.905296931048227,1.6332682343563176) circle (2.5pt);
			\draw[color=black] (3,1.83) node {$P(0,1)$};
			\draw [fill=black] (3.8914355385497297,0.5285501900662476) circle (2.5pt);
			\draw[color=black] (3.9,0.1) node {$P(0,0)$};
			\draw [fill=black] (7.633390140055851,0.182997786225348) circle (2.5pt);
			\draw[color=black] (6.8,-0.05) node {$P(2,0)$};
			\draw [fill=black] (7.151434597262734,1.546648418218532) circle (2.5pt);
			\draw[color=black] (8,1.69) node {$P(2,1)$};
			\draw [fill=black] (7.819811895183212,2.847154506904426) circle (2.5pt);
			\draw[color=black] (8.4,3.21) node {$P(2,2)$};
		\end{small}
	\end{tikzpicture}
	\caption{The Laplace transform $\mathcal L_-P = P_{-1}$ (green) of a Q-net $P$ (black).}
	\label{fig:laplacetransform}
\end{figure}
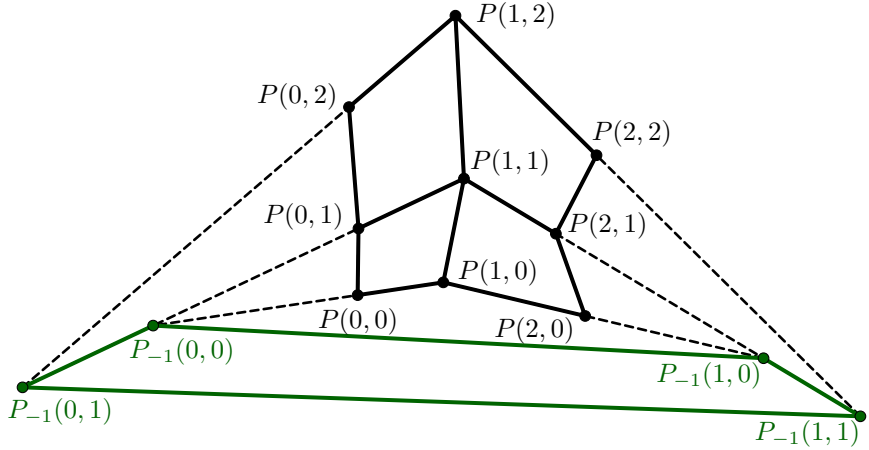
\begin{definition}\label{def:laplace}
	For a non-degenerate Q-net $P\colon \Sigma \rightarrow \RP^n$, the \emph{Laplace transforms} $\mathcal{L}_{+}P$ and $\mathcal{L}_{-}P$ are defined by 
	\begin{align}
		\mathcal{L}_{+}P(i,j) := (P(i,j)\vee P(i,j+1))\cap (P(i+1,j)\vee P(i+1,j+1)), \label{eq:laplaceforward}\\
		\mathcal{L}_{-}P(i,j):= (P(i,j) \vee P(i+1,j))\cap (P(i,j+1) \vee P(i+1,j+1)),\label{eq:laplacebackward}
	\end{align}
    see also Figure~\ref{fig:laplacetransform}.
\end{definition}
The intersection points in Definition~\ref{def:laplace} exist since the corresponding two lines are contained in the plane of the corresponding quad of the Q-net $P$, see Figure~\ref{fig:laplacetransform}. They are well-defined since $P$ is non-degenerate.  Note that for a Q-net $P$ defined on a finite patch $\Sigma_{a,b}$ the Laplace transforms $\mathcal{L}_\pm P$ are defined on the smaller patch $\Sigma_{a-1,b-1}$.

It is straightforward to check that the Laplace transforms $\mathcal{L}_+ P$ and $\mathcal{L}_- P$ are Q-nets, see for example \cite[Exericse 2.13]{BS2008DDGbook}.
Assuming that the Laplace transforms are non-degenerate, Laplace transformations can be iterated. To lighten the notation, for $m \geq 0$ we write
\begin{align}
	P_m := (\mathcal L_+)^m P, \quad \mbox{and} \quad P_{-m} := (\mathcal L_-)^{m}P,
\end{align}
for the $m$-times iterated Laplace transforms. 

It readily follows from Definition~\ref{def:laplace} that the Laplace transformations $\mathcal{L}_+$ and $\mathcal{L}_-$ are mutually inverse up to an index shift, that is:

\begin{align}
    \mathcal{L}_+ \circ \mathcal{L}_- P (i,j) = \mathcal{L}_- \circ \mathcal{L}_+ P (i,j) = P(i+1,j+1).
\end{align}

The iterated Laplace transforms determine the {\em Laplace sequence}
\begin{align}
    \ldots  \leftarrow P_{-3} \leftarrow  P_{-2}  \leftarrow P_{-1}  \leftarrow P \rightarrow P_{1} \rightarrow P_{2} \rightarrow P_{3} \rightarrow \ldots
\end{align}

Generically, if $P$ is defined on $\Z^2$, the Laplace sequence is infinite in both directions. If an iterated Laplace transform is degenerate, then the Laplace transformation (as defined in Definition~\ref{def:laplace}) is not defined anymore and we say that the Laplace sequence \emph{terminates}. Note that in the present paper our main theorems only deal with cases of terminating Laplace sequences, as we explain below.
\begin{remark}
    Note that the non-degeneracy condition for a Q-net $P$ guarantees that both $\mathcal L_+ P$ and $\mathcal L_- P$ are well-defined. However, there are degenerate cases where $\mathcal L_+ P$ is well-defined but $\mathcal L_- P$ is not (or vice versa). These cases may also be interesting, but we do not consider them in this paper. 
\end{remark}

There are two types of degeneracy that we investigate in the paper, see Figure~\ref{fig:simpledegenerations} (see also \cite{fairley2023thesis}). The first type of degeneracy is the following.

\begin{figure}[tb]
	\centering
	\includegraphics[width=0.4\textwidth]{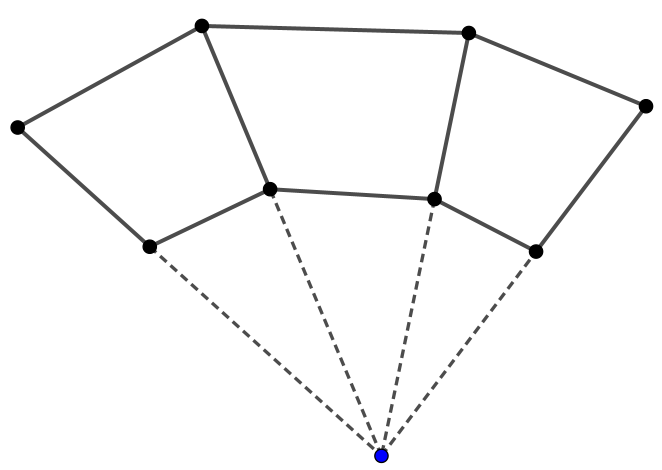}
	\hspace{10mm}
	\includegraphics[width=0.4\textwidth]{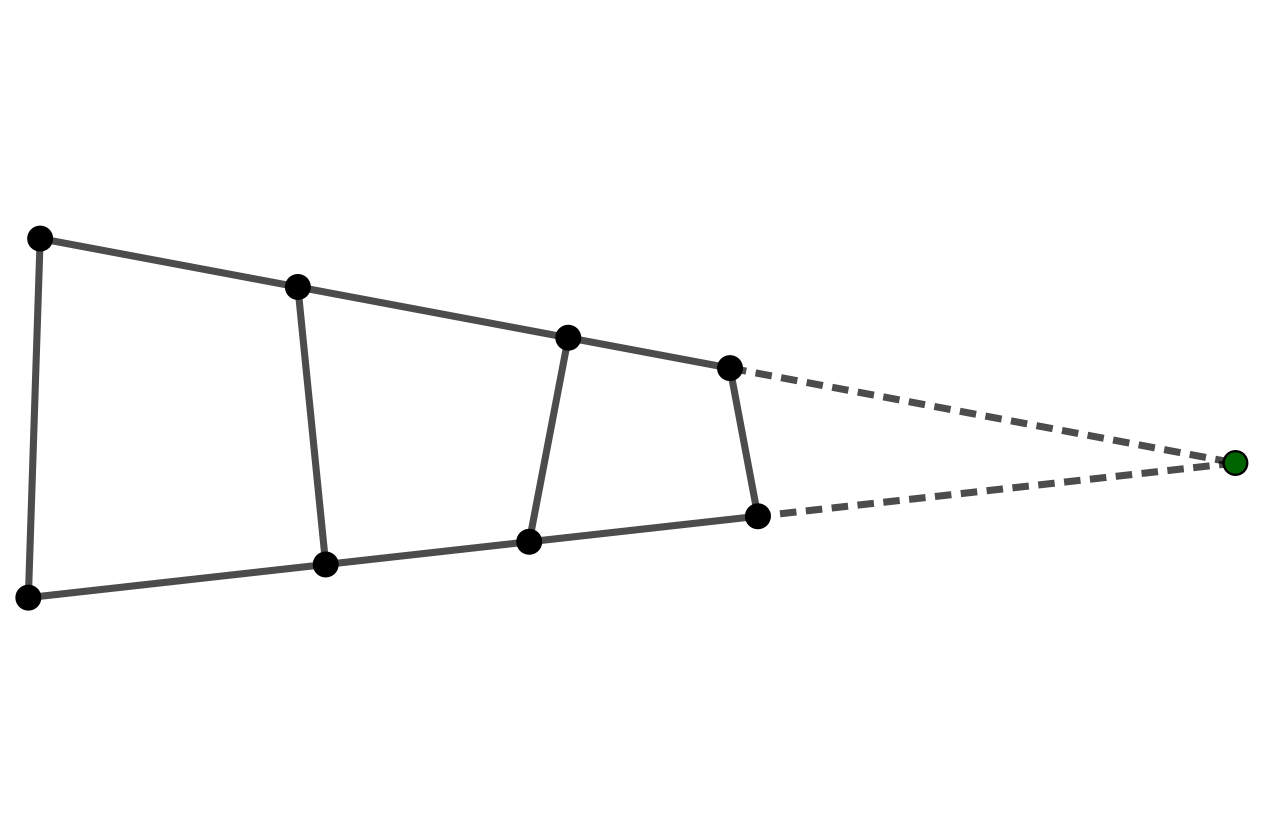}
	\caption{Left: a Q-net $P$ (black) such that $P_1$ (blue point) is Laplace degenerate. Right: a Q-net $P$ (black) such that $P_{-1}$ (green point) is Goursat degenerate.}
	\label{fig:simpledegenerations}
\end{figure}

\begin{definition}\label{defn:laplace}
	Let $P\colon \Sigma \to \RP^n$ be a Q-net, then
	\begin{enumerate}
		\item $P_{m}$ is \emph{Laplace degenerate} if $P_{m} (i,j)$  is independent of $i$ for all $j$,
		\item $P_{-m}$ is \emph{Laplace degenerate} if $P_{-m} (i,j)$  is independent of $j$ for all $i$.
	\end{enumerate}
\end{definition}
For Q-nets that are not Laplace generate, we introduce the following genericity condition.
\begin{definition}\label{defn:nowherelaplace}
	Let $P\colon \Sigma \to \RP^n$ be a non-degenerate Q-net.
	\begin{enumerate}
		\item $P_m$ is \emph{nowhere Laplace degenerate} if $P_m(i,j) \neq P_m(i+1,j)$ for all $i,j$.
		\item $P_{-m}$ is \emph{nowhere Laplace degenerate} if $P_{-m}(i,j) \neq P_{-m}(i,j+1)$ for all $i,j$.
	\end{enumerate}
\end{definition}
We make use of this genericity condition in the second type of degeneracy, which is defined as follows.
\begin{definition}\label{defn:goursat}
	Let $P\colon \Sigma \to \RP^n$ be a Q-net, then
	\begin{enumerate}	
		\item $P_{m}$ is \emph{Goursat degenerate} if $P_{m} (i,j)$  is independent of $j$ for all $i$, and $P_m$ is nowhere Laplace degenerate, 
		\item $P_{-m}$ is \emph{Goursat degenerate} if $P_{-m} (i,j)$ is independent of $i$ for all $j$,  and $P_{-m}$ is nowhere Laplace degenerate.
	\end{enumerate}
\end{definition}
If we know that $P$ is a Q-net in some Laplace sequence such that  $P(i,j)$ is independent of $i$ for all $j$, that is \emph{not} enough to distinguish whether $P$ is Laplace degenerate or Goursat degenerate. To determine this we also need to know whether $P$ was reached by forwards or backwards Laplace transforms.
Also note that if $P_{m}$ is Laplace or Goursat degenerate, that means that $P_m$ is a discrete curve -- either $(P_m(i,j))_j$ or $(P_m(i,j))_i$. 

Moreover, we will see that we may treat Goursat degeneracy as a special case of Laplace degeneracy by considering lifts of Goursat degenerate Q-nets to a projective space with higher dimension. These lifts turn out to be non-degenerate Q-nets. However, the lifts are Laplace degenerate after one more step of Laplace transformations (Lemma~\ref{lem:goursatliftstolaplace}).

\begin{remark}\label{rem:mixedtype}
	Note that we included a genericity condition in the definition of Goursat degeneracy. Without the genericity constraint, one could consider Q-nets that are both Laplace \mbox{and} Goursat degenerate, sometimes called \emph{degenerate of mixed type}. However, as we will see later, Laplace degeneracy dominates Goursat degeneracy. Thus, our exposition is simplified by excluding the Laplace degenerate case from the Goursat degenerate case in the definition, instead of the lemmas and theorems.
\end{remark}

Next, we introduce the Laplace invariants \cite{doliwa1997geometricToda}, which are projective invariants of a Q-net, and obey rational recursion equations (see Section~\ref{sec:laplaceseq}).

\begin{definition}\label{def:discreteLaplaceinvariants}
	Let $P\colon \Z^2  \to \RP^n$ be a non-degenerate Q-net. The \emph{Laplace invariants} $H\colon \Z^2 \to \R$ and $K\colon \Z^2 \to \R$ are defined as
	\begin{align}
	       	H(i,j)&:= \cro(P(i,j), P_{1}(i,j), P(i,j+1), P_{1}(i-1,j)), \\
		   K(i,j) &:= \cro(P(i,j), P_{-1}(i,j) ,P(i+1,j) , P_{-1}(i,j-1)).
	\end{align}
\end{definition}
For both cross-ratios in Definition~\ref{def:discreteLaplaceinvariants}, the four points in the argument are on a line as required, see also Figure~\ref{fig:laplacetransform}. Moreover, combinatorially we think of $H(i,j)$ as being assigned to the vertical edge joining $(i,j)$ and $(i,j+1)$, whereas $K(i,j)$ is assigned to the horizontal edge joining $(i,j)$ and $(i+1,j)$ -- see also Figure~\ref{fig:combinatorialinvariants}.

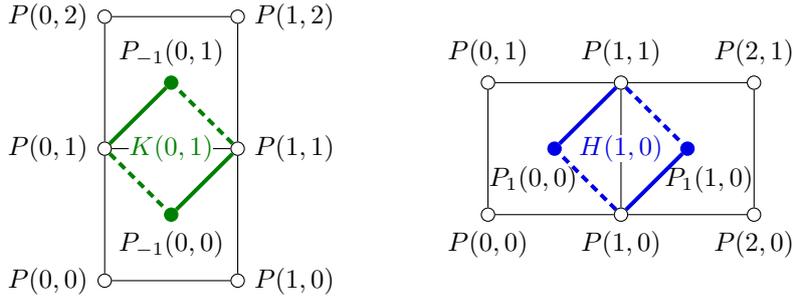
\begin{figure}[tb]
	\centering
	\begin{tikzpicture}[scale=1.75,baseline=(current bounding box.center)]
		\small
		\node[wvert, label={left:$P(0,0)$}] (p00) at (0,0) {};
		\node[wvert, label={left:$P(0,1)$}] (p01) at (0,1) {};
		\node[wvert, label={left:$P(0,2)$}] (p02) at (0,2) {};
		\node[wvert, label={right:$P(1,0)$}] (p10) at (1,0) {};
		\node[wvert, label={right:$P(1,1)$}] (p11) at (1,1) {};
		\node[wvert, label={right:$P(1,2)$}] (p12) at (1,2) {};
		\node[bvert, black!50!green, label={below:$P_{-1}(0,0)$}] (f00) at (0.5,0.5) {};
		\node[bvert, black!50!green, label={above:$P_{-1}(0,1)$}] (f01) at (0.5,1.5) {};			
		\draw[-]
			(p00) -- (p01) -- (p02) -- (p12) -- (p11) -- (p10) -- (p00)
			(p01) edge[] node [fill=white, inner sep=0, text=black!50!green] {$K(0,1)$}  (p11)
		;
		\draw[-, line width=1.5, black!50!green]
			(p01) edge[-] (f01) edge[densely dashed] (f00)
			(p11) edge[-] (f00) edge[densely dashed] (f01)
		;
	\end{tikzpicture}
	\hspace{10mm} 
	\begin{tikzpicture}[scale=1.75,baseline=(current bounding box.center)]
		\small
		\node[wvert, label={below:$P(0,0)$}] (p00) at (0,0) {};
		\node[wvert, label={above:$P(0,1)$}] (p01) at (0,1) {};
		\node[wvert, label={below:$P(2,0)$}] (p20) at (2,0) {};
		\node[wvert, label={below:$P(1,0)$}] (p10) at (1,0) {};
		\node[wvert, label={above:$P(1,1)$}] (p11) at (1,1) {};
		\node[wvert, label={above:$P(2,1)$}] (p21) at (2,1) {};
		\node[bvert, black!10!blue, label={[xshift=14,yshift=-1]below left:$P_{1}(0,0)$}] (f00) at (0.5,0.5) {};
		\node[bvert, black!10!blue, label={[xshift=-14,yshift=-1]below right:$P_{1}(1,0)$}] (f10) at (1.5,0.5) {};			
		\draw[-]
			(p00) -- (p10) -- (p20) -- (p21) -- (p11) -- (p01) -- (p00)
			(p10) edge[] node [fill=white, inner sep=0, text=black!10!blue] {$H(1,0)$}  (p11)
		;
		\draw[-, line width=1.5, black!10!blue]
			(p10) edge[-] (f10) edge[densely dashed] (f00)
			(p11) edge[-] (f00) edge[densely dashed] (f10)
		;
	\end{tikzpicture}	
	\caption{Combinatorial picture of the Laplace invariants $H$ (left) and $K$ (right).}
	\label{fig:combinatorialinvariants}
\end{figure}

In this paper, we study the Laplace sequence for special types of Q-nets: \emph{discrete Kœnigs nets}, which are a  discretization of \emph{(smooth) Kœnigs nets} \cite{BS2008DDGbook}.
Kœnigs nets are studied in differential geometry for several reasons, maybe most importantly because if a surface has a curvature-line parametrization that is also a Kœnigs net, then the surface is an isothermic surface.
There are two definitions of discrete Kœnigs nets in the literature. The more established discretization in discrete differential geometry we call \emph{BS-Kœnigs nets}. These were introduced in \cite{BS2009Koenigsnets}. They have previously appeared in \cite{Doliwa2007} and \cite{sauer1933wackelige}, but without the interpretation as a discretization of Kœnigs nets.
\begin{definition}\label{defn:BSKoenigs}
	Let $P\colon \Sigma \to \RP^n$ be a non-degenerate Q-net. We call $P$ a {\em BS-Kœnigs net} if
	\begin{align}\label{eq:BScrossratioequ}
		H(i,j) \cdot H(i,j+1) = K(i,j+1) \cdot K(i-1,j+1).
	\end{align}
	for all $(i,j) \in \Sigma$.
\end{definition}
Note that by our definition a BS-Kœnigs net is always a non-degenerate Q-net. Also, see \cite{BS2009Koenigsnets} for several other characterizations of BS-Kœnigs nets. The second definition of discrete Kœnigs nets we call \emph{D-Kœnigs nets} \cite{doliwa2003}.

\begin{definition}\label{defn:DoliwaKoenigs}
	Let $D \colon \Sigma \to \RP^n$ be a non-degenerate Q-net. We call $D$ a {\em D-Kœnigs net} if 
	\begin{align}\label{eq:Dcrossratioequ}
		H(i,j) \cdot H(i+1,j) = K(i,j) \cdot K(i,j+1)
	\end{align} 
	for all $(i,j)\in \Sigma$.
\end{definition}
Both definitions of discrete Kœnigs nets involve the Laplace invariants of quadruples of edges, see also Figure~\ref{figure:twotypesofkoenigsnets}. 
Let us also note that there are other characterization of D-Kœnigs nets, for example the property that the six points
\begin{align}
	D_{1}(i-1,j),\ D_{1}(i,j),\ D_{1}(i+1,j),\ D_{-1}(i,j-1),\ D_{-1}(i,j),\ D_{-1}(i,j+1),
\end{align}
are contained in a conic. The equivalence to Definition~\ref{defn:DoliwaKoenigs} is shown in \cite[Exercise 2.21]{BS2008DDGbook}.

\begin{remark}
    In differential geometry, Kœnigs nets are characterized among the conjugate nets (the smooth analogue of Q-nets) by the condition that the smooth Laplace invariants $h, k$ satisfy $h = k$. That is, the horizontal and vertical Laplace invariants coincide. However, for discrete Kœnigs nets there are no horizontal and vertical Laplace invariants at the same edge, since an edge is either horizontal or vertical. Thus, it is not natural to require something like $H = K$ in the discrete theory. Instead, both discretizations are defined via an ``averaged'' version of $H=K$. 
\end{remark}

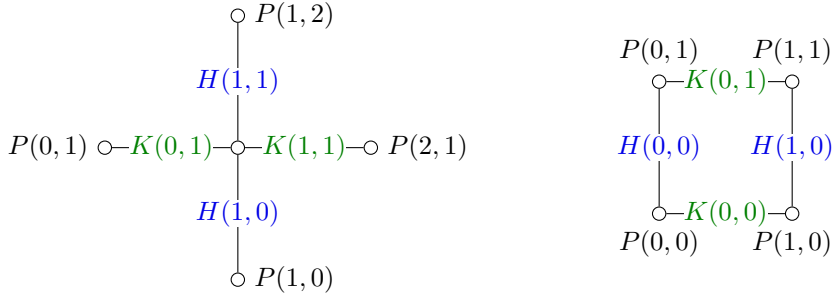
\begin{figure}[tb]
	\centering
	\begin{tikzpicture}[scale=1.75,baseline=(current bounding box.center)]
		\small
		\node[wvert, label={left:$P(0,1)$}] (p01) at (0,1) {};
		\node[wvert, label={right:$P(1,0)$}] (p10) at (1,0) {};
		\node[wvert] (p11) at (1,1) {}; 
		\node[wvert, label={right:$P(2,1)$}] (p21) at (2,1) {};
		\node[wvert, label={right:$P(1,2)$}] (p12) at (1,2) {};
		\draw[-]
			(p11) edge[] node [fill=white, inner sep=0, text=black!10!blue] {$H(1,0)$} (p10) 
			edge[] node [fill=white, inner sep=0, text=black!10!blue] {$H(1,1)$} (p12) 
			edge[] node [fill=white, inner sep=0, text=black!50!green] {$K(0,1)$} (p01) 
			edge[] node [fill=white, inner sep=0, text=black!50!green] {$K(1,1)$} (p21)
		;
	\end{tikzpicture}
	\hspace{15mm} 
	\begin{tikzpicture}[scale=1.75,baseline=(current bounding box.center)]
		\small
		\node[wvert, label={below:$P(0,0)$}] (p00) at (0,0) {};
		\node[wvert, label={above:$P(0,1)$}] (p01) at (0,1) {};
		\node[wvert, label={below:$P(1,0)$}] (p10) at (1,0) {};
		\node[wvert, label={above:$P(1,1)$}] (p11) at (1,1) {};
		\draw[-]
			(p00) edge[] node [fill=white, inner sep=0, text=black!50!green] {$K(0,0)$} (p10) edge[] node [fill=white, inner sep=0, text=black!10!blue] {$H(0,0)$} (p01)
			(p11) edge[] node [fill=white, inner sep=0, text=black!50!green] {$K(0,1)$} (p01) edge[] node [fill=white, inner sep=0, text=black!10!blue] {$H(1,0)$} (p10)
		;
	\end{tikzpicture}
	\hspace{5mm}
	\caption{If the product of the two $H$ equals the product of the two $K$ everywhere, the Q-net is a BS-Kœnigs net (left) or D-Kœnigs net (right) respectively.}
	\label{figure:twotypesofkoenigsnets}
\end{figure}

We have now all the ingredients to present our first main result, that if the Laplace sequence of a Kœnigs net terminates in one direction due to a Laplace or Goursat degeneration, then the sequence is finite.
\begin{theorem}\label{thm:BSandDKoenigslaplacedegunsymmetric}
	Let $P\colon \Sigma \to \RP^n$ be either a BS-Kœnigs net or a D-Kœnigs net. 
	\begin{enumerate}
		\item If $P_{m}$ is Laplace degenerate, then $P_{-m-1}$ is Laplace degenerate (assuming it exists). 
		\item If $P_{m}$ is Goursat degenerate, then $P_{-m-2}$ is Laplace degenerate (assuming it exists).
	\end{enumerate}
\end{theorem}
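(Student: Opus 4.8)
The plan is to pass from the geometric degeneracy conditions to purely algebraic conditions on the Laplace invariants, to reduce the Goursat case to the Laplace case using the lifting lemma, and then to prove the remaining Laplace-to-Laplace implication via the lift of a Kœnigs net to a quadric, adapting the strategy of \cite{bobenkofairley2023circularnets}.

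\textbf{Reduction to Laplace invariants.} Write $H^{(n)}, K^{(n)}$ for the Laplace invariants of $P_n$. Applying Definition~\ref{def:discreteLaplaceinvariants} to $P_{m-1}$ gives
\[
	H^{(m-1)}(i,j)=\cro\bigl(P_{m-1}(i,j),\,P_m(i,j),\,P_{m-1}(i,j+1),\,P_m(i-1,j)\bigr).
\]
A cross-ratio of four collinear points equals $1$ exactly when its first and third entries coincide or its second and fourth entries coincide; since $P_{m-1}$ is non-degenerate, the first and third never coincide, so $H^{(m-1)}(i,j)=1$ if and only if $P_m(i-1,j)=P_m(i,j)$. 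Hence $H^{(m-1)}\equiv 1$ if and only if $P_m$ is Laplace degenerate, and dually $K^{(-m)}\equiv 1$ if and only if $P_{-m-1}$ is Laplace degenerate. Thus part~(1) is equivalent to the implication $H^{(m-1)}\equiv 1 \Rightarrow K^{(-m)}\equiv 1$.

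\textbf{Goursat case from the Laplace case.} If $P_m$ is Goursat degenerate, then by Lemma~\ref{lem:goursatliftstolaplace} the Laplace sequence lifts to that of a non-degenerate Q-net $\hat P$ with $\pi\hat P_n = P_n$ for a suitable central projection $\pi$, and with $\hat P_{m+1}$ Laplace degenerate. Since central projections preserve cross-ratios, $\hat P$ has the same Laplace invariants as $P$ and is therefore again a Kœnigs net of the same type. Applying part~(1) to $\hat P$ at level $m+1$ shows that $\hat P_{-(m+1)-1} = \hat P_{-m-2}$ is Laplace degenerate, and projecting down by $\pi$ shows that $P_{-m-2}$ is Laplace degenerate. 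The only thing requiring care here is the index bookkeeping of the lift, which is part of the construction of Section~\ref{section:liftsofQnets}.

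\textbf{The core implication and the main obstacle.} Following the idea announced in the introduction, I would lift the Kœnigs net locally to a Q-net lying on a quadric $\mathcal Q$, the type of degeneracy of $\mathcal Q$ distinguishing the BS-case from the D-case. For a Q-net on a quadric, the Laplace transforms lie again on quadrics obtained from $\mathcal Q$ by polarity, so the Laplace sequence is accompanied by a bi-infinite family of quadrics that carries a reflection symmetry exchanging the two ends of the sequence; this is the mechanism exploited for Q-nets in quadrics in \cite{bobenkofairley2023circularnets}. Under this symmetry a Laplace degeneration after $m$ forward steps corresponds to a Laplace degeneration after $m+1$ backward steps, the extra step reflecting the index shift by which $\mathcal L_+$ and $\mathcal L_-$ fail to be exact inverses, which is exactly why the discrete termination occurs one step later than in the smooth theory. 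A more elementary alternative, to be kept as a fallback, is to argue directly with the rational recursions for $H^{(n)}, K^{(n)}$ of Section~\ref{sec:laplaceseq} together with the Kœnigs relation \eqref{eq:BScrossratioequ} or \eqref{eq:Dcrossratioequ}, showing that this relation forces the value $1$ to propagate from $H^{(m-1)}$ all the way to $K^{(-m)}$. The main obstacle, in either approach, is the degeneracy: the quadric carrying a lift of a Kœnigs net is necessarily degenerate, unlike the honest quadrics (spheres) treated in \cite{bobenkofairley2023circularnets}, so one must verify that polarity, the family of quadrics, and the reflection symmetry all still make sense and have the required properties through the locus where the nets degenerate; equivalently, the rational recursions for the Laplace invariants develop $0/0$ indeterminacies precisely at a degeneration, so one cannot merely substitute $H^{(m-1)}=1$ and evaluate, and it is the quadric lift that resolves these indeterminacies uniformly. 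Establishing that Kœnigs nets of both types admit such lifts and that the lift is compatible with Laplace transformations is the step I expect to require the most care.
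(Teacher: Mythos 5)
Your reduction of the Goursat case to the Laplace case via Lemma~\ref{lem:goursatliftstolaplace} is exactly the paper's argument, and your translation of the degeneracies into $H^{(m-1)}\equiv 1$ and $K^{(-m)}\equiv 1$ is Lemma~\ref{lem:Laplaceterminatemstepsalgebraic}. But the core implication is only announced, not proved, and this is where the real content lies. For the BS-case the paper lifts to an extensive net lying on the degenerate quadric $\mathcal U = U_1\cup U_2$ and then argues concretely: by the conjugacy theorem for Q-nets in quadrics, $P_{-m}(0,j)$ and $P_{-m}(1,j)$ lie in the polar hyperplane $P_m^\perp(j)$, so the line $\ell(j)$ they span does too; Laplace degeneracy of $P_m$ makes the polars $P_m^\perp(j)$ pairwise distinct hyperplanes all containing the singular locus $U_1\cap U_2$, so each $P_{-m-1}(0,j)=\ell(j)\cap\ell(j+1)$ is a \emph{singular} point of $\mathcal U$; and two distinct singular points on $\ell(j)$ would force $\ell(j)$ to be isotropic, contradicting the fact (itself proved by a perturbation argument, Lemma~\ref{lemma:nosingularpointslaplacesequence}) that the points of $P_{\pm m}$ are not singular. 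None of this is in your sketch, and the ``reflection symmetry of a bi-infinite family of quadrics'' you invoke is not a mechanism you have verified survives the degeneracy of $\mathcal U$ --- you correctly identify this as the obstacle but do not resolve it. Your fallback via the rational recursion cannot work as stated, for the reason you yourself note: the recursion is $0/0$ at $H\equiv 1$, and indeed Section~\ref{sec:symterminating} shows that generically $P_{-m}$ is \emph{not} Laplace degenerate, so no purely local algebraic propagation from level $m-1$ down to level $-m$ can be right without additional geometric input.

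The second genuine gap is the D-Kœnigs case. You propose that a D-Kœnigs net lifts to a quadric ``of a different degeneracy type,'' but no such lift is established (or used) anywhere; the natural home of a D-Kœnigs net is the codimension-two space $U_1\cap U_2$, not a quadric. The paper instead invokes the result that every D-Kœnigs net is the diagonal intersection net of some BS-Kœnigs net $P$, and then transfers degeneracy back and forth using the index-reversal symmetry $H^P_m = K^D_{-m}$ of Theorem~\ref{thm:symmetryLaplaceinvariants}: $D_m$ Laplace degenerate $\Rightarrow$ $P_{-m}$ Laplace degenerate $\Rightarrow$ $P_{m+1}$ Laplace degenerate $\Rightarrow$ $D_{-m-1}$ Laplace degenerate. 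This is a different mechanism from anything in your plan, and without it (or a worked-out substitute) the D-case of both parts of the theorem remains open in your proposal.
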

We develop a proof for this theorem in Section~\ref{sec:terminating}. Also, recall that we assume throughout that $m > 0$. However, it is clear that Theorem~\ref{thm:BSandDKoenigslaplacedegunsymmetric} can also be stated for $m < 0$ using symmetry. For example, if $m$ is negative and $P_{m}$ is Laplace degenerate, then $P_{-m+1}$ (and not $P_{-m-1}$) is also Laplace degenerate (assuming it exists). 

The existence assumptions in Theorem~\ref{thm:BSandDKoenigslaplacedegunsymmetric} are genericity assumptions. In particular, consider a Kœnigs net $P$ such that $P_m$ (and therefore $P_{-m-1})$ is Laplace degenerate. Moreover, define the map $Q$ via $Q(i,j) = P(j,i)$. Then, $Q_{m+1}$ is Laplace degenerate. If we now apply Theorem~\ref{thm:BSandDKoenigslaplacedegunsymmetric} to $Q$, it would appear that $Q_{-m-2}$ is Laplace degenerate. However, $Q$ does not satisfy the genericity assumption: $Q_{-m-2}$ does not exist. Note that if $Q_{-m-2}$ did exist, it would equal $P_{m+2}$ -- but $P_{m+2}$ does not exist since $P_{m}$ is Laplace degenerate.

Also, we will discuss how to construct BS-Kœnigs nets from boundary data, such that $P_{m}$ is Laplace degenerate and such that $P_{-m-1}$ \emph{does} exist (see Section~\ref{sec:symterminating}). We will also see that there are BS-Kœnigs nets such that $P_{m}$ is Laplace degenerate and such that $P_{-m-1}$ \emph{does not} exist. In fact, we show how to construct BS-Kœnigs nets, such that $P_{m}$ is Laplace degenerate and such that $P_{-m}$ is also Laplace degenerate, by putting additional constraints on the boundary data.

\begin{remark}\label{rem:tooearlyone}
    Another reason why we are interested in BS-Kœnigs nets such that $P_m$ and $P_{-m}$ are Laplace degenerate arises from the following observation.
    In differential geometry, if the Laplace sequence of a (smooth) Kœnigs net is Laplace degenerate after $m$ steps, then the Laplace sequence is (generally) also Laplace degenerate after $m$ steps in the other direction \cite{Tzitzeica1924geometrie}. Thus, it is surprising that the termination in Theorem~\ref{thm:BSandDKoenigslaplacedegunsymmetric} (generally) occurs one step later than in the smooth theory. 
    This difference leads to the natural question: are there discrete Kœnigs nets such that both $P_m$ and $P_{-m}$ are Laplace degenerate? This, we answer in the positive in Section~\ref{sec:symterminating}, where we also explain how to construct these discrete Kœnigs nets from boundary data.
\end{remark}

There is a way to make Theorem~\ref{thm:BSandDKoenigslaplacedegunsymmetric} more symmetric, by considering the interplay between BS-Kœnigs nets and D-Kœnigs nets, as we explain in the following.

\begin{definition}
	Let $P\colon \Sigma_{a,b} \to \RP^n$ be a non-degenerate Q-net. The \emph{diagonal intersection net $D$ of $P$} is the map
	\begin{align}
		D\colon  \Sigma_{a-1,b-1} \rightarrow \RP^n, \quad (i,j) \mapsto \big(P(i,j) \vee P(i+1,j+1)\big) \cap \big(P(i+1,j)\vee P(i,j+1)\big).
	\end{align}	
\end{definition}
Diagonal intersection nets relate BS-Kœnigs and D-Kœnigs nets as follows.
\begin{lemma}\label{lem:BSKoenigsandDiagonalIntersectionNetisDKoenigs}
	Let $P$ be a BS-Kœnigs net. Then, the diagonal intersection net $D$ is a Q-net, and if $D$ is non-degenerate then $D$ is a D-Kœnigs net.
\end{lemma}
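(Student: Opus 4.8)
The claim has two parts: first, that the diagonal intersection net $D$ of a BS-Kœnigs net $P$ is always a Q-net (i.e.\ its faces are planar); second, that if $D$ is non-degenerate then it satisfies the D-Kœnigs equation \eqref{eq:Dcrossratioequ}. The planarity of $D$ is precisely the classical characterization of BS-Kœnigs nets that is attributed to Sauer in the introduction: a non-degenerate Q-net is a BS-Kœnigs net if and only if its diagonal intersection net is a Q-net (see \cite{BS2009Koenigsnets, sauer1933wackelige}). So for the first part I would either cite this equivalence directly, or, if a self-contained argument is wanted, unwind it: the face of $D$ over $(i,j)$ has vertices $D(i,j), D(i+1,j), D(i,j+1), D(i+1,j+1)$, each lying in the plane of the corresponding face of $P$; two adjacent faces of $P$ share an edge, and the two corresponding $D$-vertices both lie on the diagonals of those faces. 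Planarity of the $D$-quad is then a projective incidence statement that, after choosing representative vectors adapted to the common edges, reduces to exactly the multiplicative relation \eqref{eq:BScrossratioequ} among the Laplace invariants $H, K$ of $P$. This is the computational heart of the first part, and it is the step I expect to be the main obstacle if a from-scratch proof is desired rather than a citation.

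\textbf{The D-Kœnigs equation for $D$.} For the second part, assume $D$ is a non-degenerate Q-net. I would express the Laplace invariants of $D$ in terms of those of $P$. The key geometric observation is that the diagonal intersection net $D$ interacts very cleanly with Laplace transformations of $P$: the point $D(i,j)$ lies on the line $P(i,j)\vee P(i+1,j+1)$ and on $P(i+1,j)\vee P(i,j+1)$, and these lines are intimately related to the Laplace transforms $P_1$ and $P_{-1}$ of $P$. In particular one should be able to identify the forward and backward Laplace transforms $D_1$, $D_{-1}$ of $D$ with (index-shifted) versions of the Laplace transforms of $P$, or at least relate the relevant cross-ratios. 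Concretely, I would compute the Laplace invariants $H^D(i,j)$ and $K^D(i,j)$ of $D$ as cross-ratios of four collinear points built from $D, D_{\pm 1}$, push everything back to cross-ratios of points of $P$ using projective invariance of the cross-ratio under the central projections implicit in the Laplace and diagonal constructions, and thereby get formulas of the shape
\begin{align}
	H^D(i,j) &= f\big(H(i,j),H(i+1,j),K(i,j),K(i,j+1),\dots\big),\\
	K^D(i,j) &= g\big(\dots\big).
\end{align}
Then I would verify that the BS-Kœnigs relation \eqref{eq:BScrossratioequ} for $P$ (holding at all relevant vertices) forces the D-Kœnigs relation \eqref{eq:Dcrossratioequ} for $D$, i.e.\ $H^D(i,j)H^D(i+1,j) = K^D(i,j)K^D(i,j+1)$.

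\textbf{Alternative via the conic characterization.} A cleaner route for the second part may be to use the conic characterization of D-Kœnigs nets quoted in the excerpt after Definition~\ref{defn:DoliwaKoenigs}: $D$ is a D-Kœnigs net iff the six points $D_1(i-1,j), D_1(i,j), D_1(i+1,j), D_{-1}(i,j-1), D_{-1}(i,j), D_{-1}(i,j+1)$ lie on a conic. If the Laplace transforms of $D$ can be matched up with point configurations coming from $P$ and its Laplace transforms, the BS-Kœnigs property of $P$ (in one of its geometric guises) should directly produce the required conic through the six points. I would decide between the cross-ratio bookkeeping and the conic approach based on which identification $D_{\pm 1} \leftrightarrow (\text{data of } P)$ turns out to be more transparent; the conic version trades a calculation for a synthetic argument but requires setting up the six-point correspondence carefully.

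\textbf{Main obstacle.} The crux throughout is the passage between Laplace invariants of $P$ and Laplace invariants of $D$ — equivalently, pinning down exactly how the diagonal intersection construction commutes (up to index shifts) with $\mathcal L_{\pm}$. Once that dictionary is in place, both the planarity of the $D$-faces and the D-Kœnigs relation should follow from \eqref{eq:BScrossratioequ} by the multiplicativity of the defining relations and invariance of the cross-ratio under projection. I would also keep an eye on the non-degeneracy hypothesis on $D$: it is exactly what guarantees the Laplace invariants $H^D, K^D$ are well-defined (finite, nonzero) so that \eqref{eq:Dcrossratioequ} is meaningful, which is why the lemma states it as a hypothesis.
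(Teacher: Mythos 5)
Your proposal is correct, and its ``fallback'' branches coincide exactly with what the paper actually does: the paper's proof of this lemma is a pure citation, invoking \cite[Theorem~2.26]{BS2008DDGbook} for the planarity of the faces of $D$ and \cite[Exercise~2.21]{BS2008DDGbook} (the six-point conic characterization) together with Definition~\ref{defn:DoliwaKoenigs} for the D-K{\oe}nigs property. Where you go further is in sketching a self-contained computational route for the second part, namely expressing $H^D, K^D$ in terms of $H^P, K^P$; this is not how the paper proves the lemma, but the dictionary you ask for is precisely what the paper establishes later (for other purposes) in Theorem~\ref{thm:symmetryLaplaceinvariants}: the $m=0$ case gives $K^D(i,j)=H^P(i+1,j)$ and $H^D(i,j)=K^P(i,j+1)$, whence the D-K{\oe}nigs relation \eqref{eq:Dcrossratioequ} for $D$ at $(i,j)$ becomes $K^P(i,j+1)K^P(i+1,j+1)=H^P(i+1,j)H^P(i+1,j+1)$, which is exactly the BS-K{\oe}nigs relation \eqref{eq:BScrossratioequ} for $P$ at $(i+1,j)$. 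So your cross-ratio route closes cleanly and is arguably more self-contained than the paper's citation, at the cost of the representative-vector computation (which the paper only performs in the proof of Theorem~\ref{thm:symmetryLaplaceinvariants}); the paper's choice buys brevity by outsourcing both halves to the textbook. The one thing your sketch leaves genuinely unproved is the planarity of the $D$-faces (the ``computational heart'' you flag); as a from-scratch argument this does require the incidence computation or the $U_1,U_2$ hyperplane characterization of Lemma~\ref{lem:koenigslift}, but since you explicitly offer the citation as the primary option, this is not a gap.
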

\proof{
    The first part of Lemma~\ref{lem:BSKoenigsandDiagonalIntersectionNetisDKoenigs} is \cite[Theorem~2.26]{BS2008DDGbook} and the second follows from \cite[Exercise~2.21]{BS2008DDGbook} and Definition~\ref{defn:DoliwaKoenigs}. \qed
}

Thus, every BS-Kœnigs net $P$ has a D-Kœnigs net $D$ attached to it in a very natural way. It will sometimes be beneficial to think of the pair $P,D$ as being a joint description of one discrete surface.

\begin{remark}
    In Definition~\ref{defn:extensive} we will introduce the notion of an \emph{extensive Q-net}. For extensive Q-nets Lemma~\ref{lem:BSKoenigsandDiagonalIntersectionNetisDKoenigs} can be strengthened to state that $D$ is a Q-net if and only if $P$ is a BS-Kœnigs net, and then $D$ is non-degenerate and a D-Kœnigs net. 
\end{remark}

Let us denote by $D_m$ the $m$-th Laplace transform of $D$ -- not the diagonal intersection net of $P_m$. With that in mind we make the following observation.
\begin{theorem}\label{thm:symmetryLaplacedegenerateBSandD}
	Let $P$ be a BS-Kœnigs net and let $D$ be its diagonal intersection net.
	\begin{enumerate}
		\item $P_{m}$ is Laplace degenerate if and only if $D_{-m}$ is Laplace degenerate (assuming $P_m$ and $D_{-m}$ exist).
		\item If $P_{m}$ is Goursat degenerate, then $D_{-m-1}$ is Laplace degenerate (assuming $D_{-m-1}$ exists).
	\end{enumerate}
\end{theorem}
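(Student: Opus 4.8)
The plan is to reduce Theorem~\ref{thm:symmetryLaplacedegenerateBSandD} to Theorem~\ref{thm:BSandDKoenigslaplacedegunsymmetric} by establishing a precise ``index-shift'' dictionary between the Laplace sequence of a BS-Kœnigs net $P$ and the Laplace sequence of its diagonal intersection net $D$. The key geometric input I would isolate first is a local statement: if $P$ is a (non-degenerate) Q-net with diagonal intersection net $D$, then $D$ lies ``between'' $\mathcal L_+ P$ and $\mathcal L_- P$ in the Laplace sequence, in the sense that $\mathcal L_- D$ and $\mathcal L_+ D$ are, up to index shifts, the nets $P_{-1}=\mathcal L_- P$ and $P_{1}=\mathcal L_+ P$ respectively — or, more plausibly given the asymmetry already visible in Theorem~\ref{thm:BSandDKoenigslaplacedegunsymmetric}, that $D$ sits ``half a step'' off the sequence of $P$ so that $D_m$ interleaves with $P_m$ and $P_{m+1}$. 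Concretely I would try to prove a lemma of the form $\mathcal L_+ D(i,j) = $ (some face point of $P$) and $(\mathcal L_+)^{m} D = (\mathcal L_+)^{m'}(\text{something built from }P)$, tracking exactly how the shift accumulates. This is essentially a computation with joins and meets of the twelve-ish points of a $2\times 3$ (or $3\times 2$) patch of $P$, using planarity of the faces of $P$ heavily; Menelaus/Ceva-type incidence arguments or a direct coordinate computation in a generic chart would both work.

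Once the dictionary ``$D_m$ corresponds to $P$ shifted by $m$ plus a fixed offset'' is in hand, part~(1) follows by unwinding definitions twice. Suppose $P_m$ is Laplace degenerate. By Theorem~\ref{thm:BSandDKoenigslaplacedegunsymmetric}(1), $P_{-m-1}$ is Laplace degenerate (using the existence hypothesis). Now I translate ``$P_{-m-1}$ Laplace degenerate'' through the dictionary into a statement about $D$: the claim is that it is exactly ``$D_{-m}$ Laplace degenerate.'' The offset should be precisely the $+1$ that appears in Theorem~\ref{thm:BSandDKoenigslaplacedegunsymmetric}, which is the clean conceptual reason the statement about $D$ is symmetric ($P_m \leftrightarrow D_{-m}$) while the statement about $P$ alone is not. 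For the converse direction of part~(1) — $D_{-m}$ Laplace degenerate $\Rightarrow$ $P_m$ Laplace degenerate — I would either run the same dictionary backwards (noting that $D$ being a D-Kœnigs net, by Lemma~\ref{lem:BSKoenigsandDiagonalIntersectionNetisDKoenigs}, lets me apply Theorem~\ref{thm:BSandDKoenigslaplacedegunsymmetric} to $D$ and then translate back to $P$), or observe that $P$ can be recovered from $D$ (again up to shift) by a diagonal-intersection-type construction, making the correspondence genuinely an equivalence. Part~(2) is then the same argument fed through Theorem~\ref{thm:BSandDKoenigslaplacedegunsymmetric}(2): $P_m$ Goursat degenerate gives $P_{-m-2}$ Laplace degenerate, which the dictionary turns into $D_{-m-1}$ Laplace degenerate — again the single off-by-one offset in the dictionary converts the $-m-2$ into $-m-1$.

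The main obstacle I expect is pinning down the dictionary lemma rigorously, including the genericity bookkeeping. Several things can go wrong: the diagonal intersection net $D$ might be degenerate even when $P$ is non-degenerate (Lemma~\ref{lem:BSKoenigsandDiagonalIntersectionNetisDKoenigs} only asserts $D$ is a D-Kœnigs net \emph{if} it is non-degenerate), so the Laplace transforms $D_{\pm 1}$ may fail to be defined in exactly the situations we care about — this is precisely why the theorem carries ``assuming $P_m$ and $D_{-m}$ exist'' and ``assuming $D_{-m-1}$ exists,'' and I would need to check these parenthetical hypotheses are the right ones to make each implication go through and are not secretly contradictory. I would likely handle this by working with the lifts to maximal dimension from Section~\ref{section:liftsofQnets}, where the degenerate cases become non-degenerate and the incidence computations are uniform; in particular Lemma~\ref{lem:goursatliftstolaplace} suggests that Goursat degeneracy of $P_m$ becomes Laplace degeneracy of a lift one step later, which is exactly the mechanism producing the extra $-1$ in part~(2). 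The other delicate point is sign/orientation of the index shift — whether $D$ shifts the sequence forward or backward — which I would fix once and for all by testing the dictionary on a single explicit small example (a generic $3\times 3$ patch) before committing to the general proof.
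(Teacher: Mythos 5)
There is a genuine gap, and it sits exactly where you expected trouble: the ``dictionary lemma.'' You want a pointwise identification of nets — $\mathcal L_{\pm}D$ equal (up to index shift) to $P_{\pm 1}$, or more generally $(\mathcal L_+)^mD$ equal to an iterated Laplace transform of something built from $P$. No such identification holds: $D$ and its Laplace transforms are genuinely different point sets from the $P_k$; they only coincide in the degenerate situation (Theorem~\ref{thm:BSandDiagonoalIntersectionNetcompatible} shows $P_{-m-1}(i)=D_{-m}(i)$ precisely when both have collapsed to curves, which is a conclusion, not a tool). The dictionary that does exist lives one level down, on the Laplace \emph{invariants}: $H^P_m(i+1,j)=K^D_{-m}(i,j)$ and $H^D_m(i,j)=K^P_{-m}(i,j+1)$ (Theorem~\ref{thm:symmetryLaplaceinvariants}). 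This is proved by an explicit representative-vector computation for $m=0$ on a $2\times 3$ patch — the kind of computation you describe — and then propagated to all $m$ by the recurrence of Theorem~\ref{th:laplacerecurrence}, using that the forward and backward recurrences have the same form. Once you have it, part~(1) is immediate and is an equivalence for free: by Lemma~\ref{lem:Laplaceterminatemstepsalgebraic}, $P_m$ Laplace degenerate $\iff H^P_{m-1}\equiv 1 \iff K^D_{-m+1}\equiv 1 \iff D_{-m}$ Laplace degenerate. No appeal to Theorem~\ref{thm:BSandDKoenigslaplacedegunsymmetric} is needed at all.

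Your proposed detour through Theorem~\ref{thm:BSandDKoenigslaplacedegunsymmetric} is also problematic on its own terms. First, it requires the existence of $P_{-m-1}$ (resp.\ $P_{-m-2}$ in part~(2)), which is not among the hypotheses of the theorem you are proving — only $P_m$ and $D_{-m}$ (resp.\ $D_{-m-1}$) are assumed to exist, and the whole point of the invariant-based argument is to avoid that extra genericity assumption. Second, your fallback for the converse of part~(1) — apply Theorem~\ref{thm:BSandDKoenigslaplacedegunsymmetric} to the D-Kœnigs net $D$ and translate back — is circular in the paper's logical order: the D-Kœnigs case of that theorem (Proposition~\ref{prop:dkoenigslaplace}) is itself proved \emph{using} the present theorem. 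What you do get right is part~(2)'s mechanism: Lemma~\ref{lem:goursatliftstolaplace} converts Goursat degeneracy of $P_m$ into Laplace degeneracy of $\hat P_{m+1}$ for a lift, after which part~(1) (applied to the lift, then projected down) yields $D_{-m-1}$ Laplace degenerate — that is exactly the paper's Proposition~\ref{prop:bsdgoursat}, and your conceptual remark that the one-step offset is what restores the $P\leftrightarrow D$ symmetry is on target.
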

\proof{
    Follows from Proposition~\ref{prop:bsdlaplace} and Proposition~\ref{prop:bsdgoursat}.\qed
}

Recall that Theorem~\ref{thm:BSandDKoenigslaplacedegunsymmetric} states that if $P_m$ is Laplace degenerate then generically $P_{-m-1}$ is Laplace degenerate. Assuming the existence of all maps involved, we may combine Theorem~\ref{thm:BSandDKoenigslaplacedegunsymmetric} and Theorem~\ref{thm:symmetryLaplacedegenerateBSandD} as follows:
\begin{align}
	P_m \mbox{ Laplace degenerate} \quad &\xRightarrow{\mbox{\small Theorem }\ref{thm:BSandDKoenigslaplacedegunsymmetric}} \quad P_{-m-1} \mbox{ Laplace degenerate}, \\
	P_m \mbox{ Laplace degenerate} \quad &\xRightarrow{\mbox{\small Theorem }\ref{thm:symmetryLaplacedegenerateBSandD}} \quad D_{-m} \mbox{ Laplace degenerate}, \\
	D_{-m} \mbox{ Laplace degenerate} \quad &\xRightarrow{\mbox{\small Theorem }\ref{thm:BSandDKoenigslaplacedegunsymmetric}} \quad D_{m+1} \mbox{ Laplace degenerate}.
\end{align}
Thus, if $P_m$ is Laplace degenerate we conclude that (generically) all four maps $P_m$, $D_{m+1}$, $P_{-m-1}$ and $D_{-m}$ are Laplace degenerate. In particular, this means that for (otherwise generic) Laplace degenerate $P_m$ it is $D_{m+1}$ that is Laplace degenerate -- that is the forward singularity occurs one step \emph{later} in $D$ compared to $P$. In this case, the (non-generic) $P_{-m-1}$ is Laplace degenerate as well, but it is $D_{-m}$ that is Laplace degenerate -- that is the backward singularity occurs one step \emph{earlier} in $D$ compared to $P$.

\begin{remark}
    As mentioned in Remark~\ref{rem:tooearlyone}, the Laplace degeneracy in Theorem~\ref{thm:BSandDKoenigslaplacedegunsymmetric} occurs one step later than it does for smooth Kœnigs nets. However, by viewing $(P_m,D_{m+1})$ as a degenerate forward pair and $(D_{-m},P_{-m-1})$ as a degenerate backward pair the behaviour looks more symmetric.
\end{remark}

\section{Lifts of Q-nets}\label{section:liftsofQnets}

In this section we develop a technique of lifting Q-nets to higher dimensions, which will prove helpful on several occasions in the remainder of the paper.
\begin{figure}
	\centering
	\includegraphics[width=.6\textwidth]{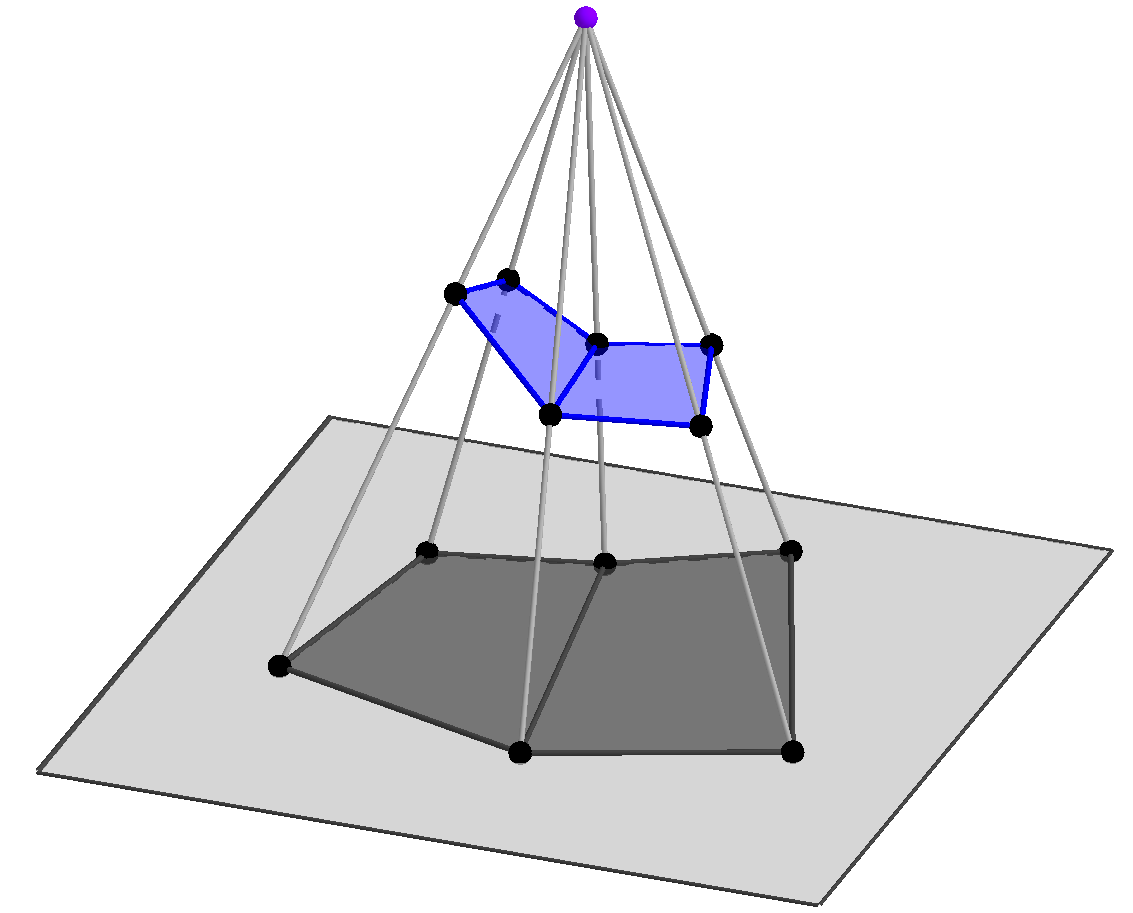}
	\caption{A lift $Q\colon \Sigma_{2,1} \mapsto \RP^3$ (blue) of a Q-net $P\colon \Sigma_{2,1} \mapsto \RP^2$ (black) with 0-dimensional center $C$.}
\end{figure}
Note that a non-degenerate Q-net $P$ defined on $\Sigma_{a,b}$ joins at most $a+b$ dimensions. This is clear because $P$ restricted to the set
\begin{align}
	\{(i,j) \mid ij = 0\},
\end{align}
joins at most $a+b$ dimensions and the remainder of the points of $P$ is in this join.

\begin{definition}\label{defn:extensive}
	We call a Q-net $P\colon \Sigma_{a,b} \rightarrow \RP^n$ \emph{extensive} if it is non-degenerate and if the image of $P$ joins an $(a+b)$-dimensional projective space. Moreover, for $c,d \in \N$ with $c \leq a, d \leq b$ we say $P$ is \emph{$\Sigma_{c,d}$-extensive} if every $\Sigma_{c,d}$ subpatch of $P$ is extensive.
\end{definition}

With this terminology, a non-degenerate Q-net is necessarily $\Sigma_{1,1}$-extensive.
Moreover, the restriction $P'$ of an extensive Q-net $P\colon \Sigma_{a,b} \rightarrow \RP^n$ to a subpatch $\Sigma_{a',b'} \subset \Sigma_{a,b}$ is also extensive. This can be proven by contradiction, because if the joined dimension of $P'$ is lower than $a'+b'$, then the joined dimension of $P$ has to be lower than $a + b$. 

\begin{lemma} \label{lem:lift}
	Let $a,b \in \N$, with $a,b>0$ and let $m=a+b$. Consider a non-degenerate Q-net $P\colon \Sigma_{a,b} \rightarrow \RP^{m}$ such that the points of $P$ join a subspace $E \subset \RP^{m}$ of dimension $n < m$. Let $C \subset \RP^{m}$ be supplementary to $E$ and let $\pi$ be the central projection with center $C$ to $E$. Then there is an extensive Q-net $\hat P \colon \Sigma_{a,b} \rightarrow \RP^{m}$ such that $\pi(\hat P) = P$. We call $\hat P$ a \emph{lift} of $P$.
\end{lemma}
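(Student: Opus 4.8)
The plan is to construct the lift $\hat P$ inductively over the patch $\Sigma_{a,b}$, proceeding one row (or column) at a time, and lifting along a staircase so that at each step a new vertex is determined by a line-plane intersection in $\RP^m$. First I would fix an affine chart, or rather work directly with the fibers $\pi^{-1}(P(i,j)) = P(i,j)\vee C$, which are $(m-n)$-dimensional projective subspaces; a lift is a choice of point in each such fiber, subject to the planarity (Q-net) condition on every face, and the requirement that the resulting net be extensive.

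The key steps, in order. (1) Lift the ``L-shaped'' boundary $\{(i,j)\mid ij=0\}$ freely: choose the points $\hat P(i,0)$ and $\hat P(0,j)$ in their respective fibers so that together they span an $(a+b)$-dimensional subspace of $\RP^m$ — this is possible because $m = a+b$, the fibers are ``transverse enough'' to $E$ (since $C$ is supplementary to $E$), and genericity in the fibers forces the joined dimension up to the maximum $a+b$. (2) Now fill in the interior: for each interior vertex $(i,j)$ with $i,j>0$, assuming $\hat P(i-1,j)$, $\hat P(i,j-1)$, $\hat P(i-1,j-1)$ have already been chosen, I would \emph{define} $\hat P(i,j)$ to be the intersection of the fiber $\pi^{-1}(P(i,j))$ with the plane $\hat P(i-1,j)\vee \hat P(i,j-1)\vee \hat P(i-1,j-1)$. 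One must check this intersection is a single point: the plane spanned by the three lifted points projects (via $\pi$) onto the plane of the face $(i-1,j-1),(i,j-1),(i-1,j),(i,j)$ of $P$ — here I use that $\pi$ restricted to that lifted plane is a bijection onto the face plane, which holds because the three already-lifted points are in general position and their plane meets the center $C$ trivially (an inductive genericity claim) — so the fiber over $P(i,j)$, which is a line in $\RP^3$-worth of directions transverse to that plane... more precisely, the preimage under $\pi$ of the point $P(i,j)$ inside the lifted face plane is exactly one point. (3) Verify that the net $\hat P$ so constructed is a Q-net: every face is planar by construction. (4) Verify $\pi(\hat P) = P$: immediate, since each $\hat P(i,j)$ was chosen in the fiber over $P(i,j)$. (5) Verify extensiveness: the joined dimension is at least $a+b$ by step (1) and at most $a+b$ by the remark preceding the lemma (a non-degenerate Q-net on $\Sigma_{a,b}$ joins at most $a+b$ dimensions), and non-degeneracy of $\hat P$ follows from the generic choices, possibly after shrinking the allowed configuration space of boundary lifts to a dense open set.

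The main obstacle I expect is step (2), specifically the bookkeeping to show that at every interior vertex the relevant lifted plane genuinely meets the fiber in exactly one point, and never degenerates (the plane never contains a point of $C$, the intersection is never empty or positive-dimensional). This requires propagating a genericity hypothesis through the induction: I would phrase it as ``the boundary lift is chosen in a suitable dense open subset of $\prod \pi^{-1}(P(i,0)) \times \prod \pi^{-1}(P(0,j))$'', and then argue that the bad loci at each interior vertex are proper algebraic subsets, so their union is still avoided by a dense open set of boundary choices. A secondary point needing care is that one wants the \emph{same} projection $\pi$ to recover $P$ on the whole patch, i.e. the construction is genuinely canonical once the boundary is fixed; this is not hard but should be stated. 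One could alternatively package steps (1)--(5) more slickly by choosing a linear section of $\pi$ over a generic complement and using that Q-nets are determined by Cauchy data on the boundary L-shape, but the inductive argument above is the most transparent.
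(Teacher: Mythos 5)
Your proposal is correct and follows essentially the same route as the paper: choose the lifts along the boundary cross $\{ij=0\}$ so that they span all of $\RP^{a+b}$, then determine each interior $\hat P(i,j)$ as the intersection of the fiber $P(i,j)\vee C$ with the plane of the three previously lifted neighbours. The only (harmless) difference is that you propagate a genericity hypothesis into the interior steps, whereas no genericity is needed there: the lifted face plane is automatically disjoint from $C$ because its projection already spans a plane by non-degeneracy of $P$, so the intersection with the fibre is a single point for free.
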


\proof{
	We begin by choosing $\hat P(0,0)$ arbitrarily in $(P(0,0) \vee C ) \setminus C$. Then we iteratively choose $\hat P(i,0)$ in $(P(i,0) \vee C) \setminus C$ such that it is not in the join of the previously chosen points. After choosing $\hat P(a,0)$ we continue the same procedure with $\hat P(0,1), \hat P(0,2), \dots, \hat P(0,b)$. Thus, the points of $\hat P$ that we have chosen so far join all of $\RP^m$. Every other point $\hat P(i,j)$ with $ij\neq 0$ is uniquely determined by
	\begin{align}
		\hat P(i,j) = (P(i,j) \vee C) \cap (\hat P(i-1, j-1) \vee \hat P(i-1, j) \vee \hat P(i, j-1)).
	\end{align}
	The point $\hat P(i,j)$ is not in $C$ because $P$ is non-degenerate. Moreover, since $P$ is non-degenerate and $C$ is supplementary to $E$, $\tilde P$ is also non-degenerate. \qed
}

To simplify the phrasing of statements, if a Q-net $P$ is extensive we consider $\hat P = P$ to be a lift of $P$.

Since cross-ratios are invariant under projections, the Laplace invariants of a Q-net and its lift are the same. This implies that any lift of a BS-K{\oe}nigs net is a BS-K{\oe}nigs net and any lift of a D-K{\oe}nigs net is a D-K{\oe}nigs net. Moreover, it is not hard to see that the diagonal intersection net $\hat D$ of a lift $\hat P$ is also a lift of the diagonal intersection net $D$ of $P$.
Another useful and immediate observation is that for $P_i$ in the Laplace sequence of $P$ and $\hat P_i$ in the Laplace sequence of a lift $\hat P$, $\hat P_i$ is also a lift of $P_i$.

One reason to consider lifts is that extensive BS-K{\oe}nigs nets have the following useful property that is not necessarily true for non-extensive BS-K{\oe}nigs nets.

\begin{lemma} \label{lem:koenigslift}
	Let $a,b \in \N$ with $a,b \geq 1$ and let $m=a+b$. Consider an extensive BS-K{\oe}nigs net $P \colon \Sigma_{a,b} \rightarrow \RP^m$. Then, there are two different hyperplanes $U_1, U_2 \subset \RP^m$ such that
	\begin{align}
		P(i,j) \in \begin{cases}
			U_1 & \mbox{if } i+j\in 2\Z, \\
			U_2 & \mbox{if } i+j \in 2\Z + 1.
		\end{cases}
	\end{align}
	Conversely, if two such hyperplanes exist for an extensive Q-net $P$ then $P$ is a BS-K{\oe}nigs net.
\end{lemma}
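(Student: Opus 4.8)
The plan is to prove both implications through the representative-vector (\emph{Moutard}) form of the BS-Kœnigs condition. Recall from \cite{BS2008DDGbook} that a non-degenerate Q-net $P$ is a BS-Kœnigs net precisely when one can choose representative vectors $p(i,j)$ and a function $\beta$ on the faces such that
\begin{align}
	p(i+1,j+1)-p(i,j)=\beta(i,j)\,\big(p(i+1,j)-p(i,j+1)\big)
\end{align}
for every face; indeed a short cross-ratio computation shows that such a lift forces the Laplace invariants of Definition~\ref{def:discreteLaplaceinvariants} to be $H(i,j)=\beta(i,j)\beta(i-1,j)$ and $K(i,j)=\beta(i,j)\beta(i,j-1)$, so that $H(i,j)H(i,j+1)=K(i,j+1)K(i-1,j+1)$ holds identically. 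Since lifts in the sense of Lemma~\ref{lem:lift} preserve both the Laplace invariants and the BS-Kœnigs property, it is harmless to pass to an extensive representative throughout.

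For the converse implication, suppose $U_1\neq U_2$ are hyperplanes with the even vertices on $U_1$ and the odd vertices on $U_2$, and pick linear functionals $\ell_1,\ell_2$ whose kernels are the cones over $U_1,U_2$. First note that \emph{no} vertex lies on $U_1\cap U_2$: if an even vertex did, then in each incident face three of the four points would lie on $U_2$, hence by non-degeneracy so would the fourth, and this property would propagate to all even (hence all) vertices, forcing $P$ into $U_2$ and contradicting extensiveness. Therefore we may rescale the representatives so that $\ell_2\equiv 1$ on the even vertices and $\ell_1\equiv 1$ on the odd vertices. Now take any face and write its planarity relation $\lambda_{11}p(i+1,j+1)+\lambda_{00}p(i,j)+\lambda_{10}p(i+1,j)+\lambda_{01}p(i,j+1)=0$, with all $\lambda\neq 0$ by non-degeneracy. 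Each of $\ell_1,\ell_2$ annihilates one diagonal pair of the face and equals $1$ on the other, so evaluating them on this relation gives $\lambda_{10}+\lambda_{01}=0$ and $\lambda_{11}+\lambda_{00}=0$, which is exactly the Moutard relation. By the characterization above, $P$ is a BS-Kœnigs net.

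For the forward implication I would argue by induction on $m=a+b$, the base $a=b=1$ being trivial. Assuming $a\geq 2$ (using that the BS-Kœnigs condition is symmetric under exchanging the two axes), let $P'$ and $P''$ be the restrictions of $P$ to the column-patches $\{0,\dots,a-1\}\times\{0,\dots,b\}$ and $\{1,\dots,a\}\times\{0,\dots,b\}$; these are again extensive BS-Kœnigs nets, so by induction the even vertices of $P'$ (resp.\ $P''$) span a hyperplane $V'$ (resp.\ $V''$) inside the $(m-1)$-dimensional subspace $E'$ (resp.\ $E''$) spanned by $P'$ (resp.\ $P''$). Since $E'\vee E''=\RP^m$ we get $E'\neq E''$ and $\dim(E'\cap E'')=m-2$; applying the induction hypothesis once more to the overlap patch $\{1,\dots,a-1\}\times\{0,\dots,b\}$ shows $\dim(V'\cap V'')\geq m-3$, so the Grassmann formula bounds the span $V'\vee V''$ of all even vertices of $P$ by dimension $m-1$. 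On the other hand, if some point of the last column lay on $E'$ then, comparing planar faces across columns $a-1$ and $a$, the entire last column would lie on $E'$, contradicting extensiveness; hence adjoining an even vertex of the last column to $V'$ already produces dimension $m-1$. Thus the even vertices of $P$ span a hyperplane $U_1$, the odd ones span a hyperplane $U_2$, and $U_1\neq U_2$ since otherwise all of $P$ would lie in a hyperplane. The main obstacle is to make this bookkeeping airtight in the thin and small cases: when $a=2$ (or $b=2$) the overlap patch degenerates to a single row or column and the step above breaks, so, together with the strips $\Sigma_{1,b}$ and $\Sigma_{a,1}$, these must be handled as separate base cases — there one checks directly, using that the boundary $L$-shape of an extensive Q-net carries a projective basis and that the remaining vertices are pinned by the Q-net recursion, that the relevant collections of vertices are in general position and so span exactly a hyperplane.
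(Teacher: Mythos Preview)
Your converse direction via the Moutard lift is correct and arguably more self-contained than the paper's argument, which simply invokes the geometric characterization \cite[Thm.~2.27]{BS2008DDGbook}. The linear-functional trick (normalizing $\ell_1,\ell_2$ to $1$ on the opposite-parity vertices and reading off $\lambda_{00}+\lambda_{11}=\lambda_{10}+\lambda_{01}=0$) is clean, and the propagation argument showing no vertex lies on $U_1\cap U_2$ is sound because the diagonal-adjacency graph on the even vertices of $\Sigma_{a,b}$ is connected.

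For the forward direction, however, the paper's approach is considerably shorter and avoids the base-case bookkeeping you flag as incomplete. The paper simply \emph{defines}
\[
	U_k \;=\; \mathrm{join}\{\,P(i,j)\ :\ \min(i,j)<2,\ i+j\equiv k-1\ (\mathrm{mod}\ 2)\,\},
\]
notes that each $U_k$ is joined by exactly $m$ points, and observes that extensiveness forces $U_1\vee U_2=\RP^m$, hence $U_1,U_2$ are distinct hyperplanes. Then the cited characterization --- $P$ is BS-K\oe nigs iff $P(i+2,j+2)\in P(i,j)\vee P(i+2,j)\vee P(i,j+2)\vee P(i+1,j+1)$ --- shows inductively that every remaining even vertex lies in $U_1$ (and likewise for odd), and simultaneously gives the converse. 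In effect the paper isolates exactly your $\Sigma_{2,2}$ base case as a known theorem and iterates it directly, rather than building a Grassmann-formula induction around it.

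Two small corrections to your base-case analysis: the strip cases $\Sigma_{1,b}$ and $\Sigma_{a,1}$ are in fact trivial, since there are exactly $m$ even and $m$ odd vertices, so each parity class spans at most a hyperplane automatically; and the case ``$a=2$'' is only genuinely problematic when also $b\le 2$, since for $b\ge 3$ you may run the induction in the $b$-direction instead. Thus the only nontrivial base case you actually need is $\Sigma_{2,2}$ --- which is precisely the content of \cite[Thm.~2.27]{BS2008DDGbook}. Once you note this, your induction closes, but at that point you are essentially re-deriving the paper's one-line argument by a longer route.
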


\proof{
	We define
	\begin{align}
		U_1 &:= \join \{P(i,j) \mid \min(i,j) < 2 \mbox{ and } i+j \in 2\Z \},\\
		U_2 &:= \join \{P(i,j) \mid \min(i,j) < 2 \mbox{ and } i+j \in 2\Z + 1 \},
	\end{align}
	where $U_1$, $U_2$ are joined by $m$ points each. Assume $U_1$ or $U_2$ is not a hyperplane or that $U_1$ coincides with $U_2$. Then it follows that $U_1 \vee U_2$ is at most a hyperplane, which is a contradiction since $P$ is extensive.  Therefore, $U_1, U_2$ are two different hyperplanes.
	Subsequently, we use \cite[Thm 2.27]{BS2008DDGbook} which states that $P(i+2,j+2)$ is in the join of 
	\begin{align}
		P(i,j) \vee P(i+2,j) \vee P(i,j+2) \vee P(i+1,j+1),
	\end{align}
	if and only if $P$ is a BS-K{\oe}nigs net, from which both claims follow. \qed
}

In particular, Lemma~\ref{lem:koenigslift} applies to every lift of a BS-K{\oe}nigs net. Moreover, as a consequence of Lemma~\ref{lem:koenigslift}, if $P$ is an extensive BS-K{\oe}nigs net then the diagonal intersection net $D$ is contained in and joins $U_1 \cap U_2$.

\section{Laplace invariants and Laplace sequences} \label{sec:laplaceseq}

In this section we consider certain projective invariants of Q-nets (called \emph{Laplace invariants}), and show that they obey a recursion relation under Laplace transformations. We also gather several useful lemmas for terminating Laplace sequences.

Given a Q-net $P$ and its sequence of Laplace transforms $(P_i)_{i\in \Z}$, we let $H_i$ and $K_i$ denote the Laplace invariants of $P_i$. The next lemma shows that the sequence $(H_i)_{i\in \Z}$ is the same as the sequence $(K_{i+1})_{i\in \Z}$ up to a shift of index. 

\begin{lemma} \label{lem:hk}
	The Laplace invariants $H$ and $K$ satisfy the identities
	\begin{align} 
		K_1(i,j) & = H(i+1,j),\\
		H_{-1}(i,j) & = K(i, j+1).
	\end{align}
\end{lemma}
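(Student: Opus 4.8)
My plan is to prove the two identities of Lemma~\ref{lem:hk} directly from the definitions of the Laplace transform (Definition~\ref{def:laplace}) and the Laplace invariants (Definition~\ref{def:discreteLaplaceinvariants}), unwinding the cross-ratios and identifying the relevant four points geometrically. The two statements are mirror images of each other under swapping the roles of the two coordinate directions (and swapping $\mathcal L_+$ with $\mathcal L_-$), so it suffices to carry out one carefully --- say the first, $K_1(i,j) = H(i+1,j)$ --- and then remark that the second follows by the same argument applied to $P$ with coordinates transposed, using that $\mathcal L_\pm$ are interchanged under that transposition.

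For the first identity, I would start from $K_1(i,j) = \cro\big(P_1(i,j),\, (P_1)_{-1}(i,j),\, P_1(i+1,j),\, (P_1)_{-1}(i,j-1)\big)$, and then use the fundamental fact recorded after Definition~\ref{def:laplace} that $\mathcal L_- \circ \mathcal L_+ P(i,j) = P(i+1,j+1)$, so that $(P_1)_{-1}(i,j) = P(i+1,j+1)$ and $(P_1)_{-1}(i,j-1) = P(i+1,j)$. Meanwhile $P_1(i,j)$ and $P_1(i+1,j)$ are the forward Laplace points, which both lie on the line $P(i+1,j)\vee P(i+1,j+1)$ by~\eqref{eq:laplaceforward}: indeed $P_1(i,j) = (P(i,j)\vee P(i,j+1)) \cap (P(i+1,j)\vee P(i+1,j+1))$ and $P_1(i+1,j) = (P(i+1,j)\vee P(i+1,j+1)) \cap (P(i+2,j)\vee P(i+2,j+1))$. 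So all four points of $K_1(i,j)$ lie on the single line $\ell := P(i+1,j)\vee P(i+1,j+1)$, and the cross-ratio is the cross-ratio of $P_1(i,j),\, P(i+1,j+1),\, P_1(i+1,j),\, P(i+1,j)$ on $\ell$. On the other hand, $H(i+1,j) = \cro\big(P(i+1,j),\, P_1(i+1,j),\, P(i+1,j+1),\, P_1(i,j)\big)$, whose four points lie on the same line $\ell$ (consult Definition~\ref{def:discreteLaplaceinvariants} and Figure~\ref{fig:laplacetransform}). It then remains to check that the two quadruples, $(P_1(i,j), P(i+1,j+1), P_1(i+1,j), P(i+1,j))$ and $(P(i+1,j), P_1(i+1,j), P(i+1,j+1), P_1(i,j))$, are related by a permutation that preserves the cross-ratio --- here the permutation is the order-reversal $(1234)\mapsto(4321)$, which indeed leaves $\cro$ invariant.

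The only genuinely fiddly part is the bookkeeping: making sure I have the correct index shifts in each argument of each cross-ratio, and verifying that the permutation relating the two $4$-tuples is one of the eight cross-ratio-preserving permutations of $S_4$. This is where an off-by-one slip would hide, so I would double-check against Figure~\ref{fig:laplacetransform}, which already labels $P_{-1}(i,j)$ and all neighbouring $P$-points. Once the first identity is pinned down, the second identity $H_{-1}(i,j) = K(i,j+1)$ follows symmetrically: now $(P_{-1})_1(i,j) = P(i+1,j+1)$ and $(P_{-1})_1(i,j-1)$... wait, for $H_{-1}$ the relevant points are $P_{-1}(i,j)$, $(P_{-1})_1(i,j)$, $P_{-1}(i,j+1)$, $(P_{-1})_1(i-1,j)$, which collapse via $\mathcal L_+\circ\mathcal L_- P(i,j)=P(i+1,j+1)$ to points all lying on the line $P(i,j+1)\vee P(i+1,j+1)$, matching the four points of $K(i,j+1)$ up to a cross-ratio-preserving permutation. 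I expect no new difficulty there. The upshot is a short proof: both identities are really just the statement that the forward Laplace points of $P$ become, after one further backward (resp.\ forward) transform, ordinary vertices of $P$, combined with the symmetry of the cross-ratio under order reversal.
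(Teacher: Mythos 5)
Your proposal is correct and is exactly the argument the paper has in mind: the paper's proof simply states that the identities follow immediately from Definition~\ref{def:discreteLaplaceinvariants} together with Equations~\eqref{eq:laplaceforward} and~\eqref{eq:laplacebackward}, and your unwinding — using $\mathcal L_-\circ\mathcal L_+P(i,j)=P(i+1,j+1)$ to identify the four collinear points and then invoking invariance of the cross-ratio under the order-reversal $(14)(23)$ — fills in those details accurately, with the index shifts checked correctly.
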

\proof{
	Follows immediately from the definition of the Laplace transforms and the Laplace invariants, that is from Definition~\ref{def:discreteLaplaceinvariants} and Equations~\eqref{eq:laplaceforward} and \eqref{eq:laplacebackward}. \qed
}

Moreover the sequence of Laplace invariants satisfies a recurrence relation.
This recurrence relation was found by Doliwa in \cite{doliwa1997geometricToda}, albeit using different conventions for the Laplace invariants.

\begin{theorem} \label{th:laplacerecurrence}
	The Laplace invariants $H$ satisfy the recurrence
	\begin{align} 
		H_1(i,j) &=  H_{-1}^{-1}(i,j)  \frac{1-H(i+1,j)}{1-H^{-1}(i,j)}  \frac{1-H(i,j+1)}{1-H^{-1}(i+1,j+1)}  	.	\label{eq:laplacerecurrence} 		
	\end{align}	
\end{theorem}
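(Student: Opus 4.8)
The plan is to derive the recurrence for $H_1$ directly from the geometric definition of the Laplace invariants together with a single underlying cross-ratio identity on a projective line, rather than trying to manipulate $H_1$ through two successive Laplace transformations abstractly. First I would set up coordinates: fix a quad $P(i,j), P(i,j+1), P(i+1,j), P(i+1,j+1)$ together with the neighbouring vertices that appear in the definitions of $H$, $H_{-1}$, $H_1$. By Lemma~\ref{lem:hk} we know $H_{-1}(i,j) = K(i,j+1)$, so the left-hand factor $H_{-1}^{-1}(i,j)$ is really a cross-ratio of $P$-points, and the whole identity \eqref{eq:laplacerecurrence} can be read as an identity purely among cross-ratios built from a finite configuration of points of $P$ (using $P_1$ and $P_{-1}$ only as auxiliary intersection points that are themselves determined by $P$). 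So the task reduces to verifying one algebraic identity in the field of cross-ratios of an explicit planar point configuration.

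Next I would choose affine coordinates adapted to the quad — for instance, place $P(i,j)$, $P(i+1,j)$, $P(i,j+1)$ at convenient base points, use the planarity of each face to parametrise the remaining points by a few scalars, and compute each of the five cross-ratios $H(i+1,j)$, $H(i,j+1)$, $H_{-1}(i,j)$, $H_{-1}(i+1,j+1)$, and $H_1(i,j)$ as explicit rational functions of those scalars. The forward Laplace point $P_1(i,j)$ is the intersection of $P(i,j)\vee P(i,j+1)$ with $P(i+1,j)\vee P(i+1,j+1)$, and $P_1(i-1,j)$, $P_1(i,j)$ etc.\ enter the definition of $H_1$; all of these are linear-algebra intersections, hence rational in the coordinates. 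Substituting everything into \eqref{eq:laplacerecurrence} should then collapse to an identity $0 = 0$ after clearing denominators. Alternatively — and this is the cleaner route I would actually pursue — one can reduce the whole computation to $\RP^1$: project the relevant points onto a single line from a suitable centre, using the projective invariance of cross-ratio, so that the identity becomes a statement about six or seven points on $\RP^1$ and follows from standard multiplicativity/Plücker-type relations among cross-ratios of collinear points.

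The main obstacle I expect is bookkeeping: keeping the index shifts straight (the recurrence mixes $H$ at $(i,j)$, $(i+1,j)$, $(i,j+1)$, $(i+1,j+1)$ and $H_{-1}$ at $(i,j)$ and $(i+1,j+1)$, while $H_1$ is evaluated at $(i,j)$), and making sure every auxiliary point used is actually well-defined under the non-degeneracy hypothesis so that no cross-ratio is $0$, $1$ or $\infty$ in a way that would invalidate the manipulation. There is also a mild subtlety in that $H_1$ requires $P_2 = \mathcal L_+ P_1$ implicitly (through $P_1$'s own Laplace transform in the definition of its invariant $H_1$), so I would want to express $P_1(i-1,j)\vee P_1(i,j)$ lines directly in terms of $P$ to avoid an extra layer of iteration. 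Once the configuration is fixed and coordinates chosen, the verification is a finite rational-function identity; I would either present the slick $\RP^1$ reduction or simply remark that the identity is a routine (if tedious) computation and cite Doliwa~\cite{doliwa1997geometricToda}, noting only the translation between his conventions and ours via Lemma~\ref{lem:hk}.
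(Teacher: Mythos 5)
Your high-level strategy matches the paper's: project everything to $\RP^1$ (cross-ratios and hence Laplace invariants are projection-invariant) and reduce \eqref{eq:laplacerecurrence} to an identity among cross-ratios of a finite point configuration. But there is a genuine gap at the decisive step. You assert that, once on $\RP^1$, the identity ``follows from standard multiplicativity/Pl\"ucker-type relations among cross-ratios of collinear points.'' It does not: after projection the points of $P$, $P_{\pm 1}$, $P_2$ are \emph{not} generic points on a line subject only to formal cross-ratio algebra. The recurrence encodes real geometric input, namely that each quad is planar and that the Laplace points are its two focal points. In the paper this input enters as four Menelaus-type identities, one per relevant quad, each stating that the multi-ratio of the four vertices and two focal points equals $-1$. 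These four relations, multiplied against six cross-ratio identities that merely unpack the definitions of $H_1$, $H_{-1}$ and the factors $1-H^{\pm 1}$ via permutation rules, telescope to give that the product of the six left-hand quantities is $(-1)^4 = 1$, which is exactly \eqref{eq:laplacerecurrence}. Without identifying these Menelaus relations (or an equivalent source of the planarity constraint), your $\RP^1$ reduction has nothing to telescope; no universal identity of collinear points will produce the recurrence.

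Your fallback — explicit affine coordinates and a brute-force rational-function verification — would in principle succeed, since everything is determined by finitely many parameters and the claim is a polynomial identity after clearing denominators; but you do not carry it out, and ``remark that it is routine and cite Doliwa'' is a citation, not a proof. To close the gap along your preferred route, you would need to (i) write down, for each of the four quads involved (those producing $P_1(0,0)$, $P_1(-1,0)$, $P_2(0,0)$, $P_2(-1,0)$, $P_{-1}(0,0)$, $P_{-1}(0,1)$), the multi-ratio identity $\mathrm{mr}=-1$ for its vertices and focal points, and (ii) exhibit the six cross-ratio factors of \eqref{eq:laplacerecurrence} as cross-ratios of the same points so that the product of all ten identities collapses. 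Your bookkeeping worries about index shifts and about $P_2$ entering through the definition of $H_1$ are legitimate, and are precisely resolved by that explicit list.
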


\proof{
	In \cite{abs} the proof of this formula was sketched from the view point of the octahedral lattice. Let us recall the key observations (in a more explicit way as compared to \cite{abs}). We assume without loss of generality that $i= j = 0$. Since Laplace invariants are preserved by projections, we work with a projection $P$ of the Q-net to $\RP^1$. 
	We use the notion of a \emph{multi-ratio}, which is a natural generalization of the cross-ratio for six points $A_1,A_2,\dots, A_6 \in \RP^1$, given by
	\begin{align}
		\mbox{mr}(A_1,A_2, A_3, A_4, A_5, A_6) = \frac{\det(A_1,A_2)}{\det(A_2,A_3)}\frac{\det(A_3,A_4)}{\det(A_4,A_5)}\frac{\det(A_5,A_6)}{\det(A_6,A_1)}.
	\end{align}
	We begin with the four multi-ratio identities 
	\begin{align}
		-1 &= \mbox{mr}(P_1(0,0), P_2(0,0), P_1(0,1), P_1(1,1), P(1,1), P_1(1,0)), \\
		-1 &= \mbox{mr}(P_1(0,1), P_2(-1,0), P_1(0,0), P_1(-1,0), P(0,1), P_1(-1,1)), \\
		-1 &= \mbox{mr}(P(0,1), P_{-1}(0,1), P(1,1), P(1,2), P_1(0,1), P(0,2)), \\
		-1 &= \mbox{mr}(P(1,1), P_{-1}(0,0), P(0,1), P(0,0), P_1(0,0), P(1,0)).
	\end{align}
	Each of these equations is an instance of Menelaus' theorem (see \cite{BS2008DDGbook}) applied to the four points and two focal points of a planar quad. On the other hand, we have the six cross-ratio identities
	\begin{align}
		H_1(0,0) &= \cro(P_1(0,0), P_2(0,0), P_1(0,1),P_2(-1,0)),\\
		H_{-1}(0,0) &= \cro(P(0,1), P_{-1}(0,1), P(1,1),P_{-1}(0,0)),\\
		1 - H^{-1}_1(0,0) &= \cro(P(0,0), P(0,1), P_1(-1,0),P_1(0,0)),\\
		1 - H^{-1}_1(1,1) &= \cro(P(1,1), P(1,2), P_1(0,1),P_1(1,1)),\\
		(1 -H _1(1,0))^{-1} &= \cro(P(1,0), P_1(0,0), P_1(1,0),P(1,1)),\\
		(1 - H_1(0,1))^{-1} &= \cro(P(0,1), P_1(-1,1), P_1(0,1),P(0,2)),
	\end{align}
	which follow from the definition of $H$ and the cross-ratio permutation rules. The product of the six cross-ratio equals the product of the four multi-ratios, as can be seen by an elementary expansion. Therefore the product of the six cross-ratios is 1. Thus, the product of the six quantities on the left hand side of the cross-ratio equations is also 1, and this is evidently equivalent to the claim of the theorem. \qed
	
}

\begin{remark}\label{rem:cluster}
	Note that with the substitution $H \mapsto -H$ the recurrence relation for Laplace invariants is evidently a special case of the \emph{cluster mutation} rule in the theory of cluster algebras. In other words, the sequence of Laplace invariants constitutes a \emph{Y-system}. See \cite{agprvrc, athesis} for more details.
\end{remark}

Using the identification of $H_{-1}$ and $K$ as in Lemma~\ref{lem:hk}, one immediately obtains the following form of the recurrence that is also commonly used in the literature.

\begin{corollary}\label{cor:laplacerecurrence}
	The Laplace invariants $H$ and $K$ satisfy the recurrences
	\begin{align} 
		H_1(i,j) &=  K^{-1}(i,j+1) \frac{1-H(i+1,j)}{1-H^{-1}(i,j)}  \frac{1-H(i,j+1)}{1-H^{-1}(i+1,j+1)},  \label{eq:H1formula}  \\
		K_{-1}(i,j) &= H^{-1}(i+1,j) \frac{1-K(i,j+1)}{1-K^{-1}(i,j)}  \frac{1-K(i+1,j)}{1-K^{-1}(i+1,j+1)}. \label{eq:K1formula}
	\end{align}	
\end{corollary}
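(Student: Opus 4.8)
The plan is to obtain both recurrences from Theorem~\ref{th:laplacerecurrence} purely by rewriting the $H$-factors with the dictionary provided by Lemma~\ref{lem:hk}, which identifies a backwards $H$-invariant with a shifted $K$-invariant and a forwards $K$-invariant with a shifted $H$-invariant. For \eqref{eq:H1formula} this is immediate: substituting $H_{-1}(i,j)=K(i,j+1)$ into \eqref{eq:laplacerecurrence} replaces the leading factor $H_{-1}^{-1}(i,j)$ by $K^{-1}(i,j+1)$ and leaves the three cross-ratio fractions untouched, which is exactly the claimed identity.

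For \eqref{eq:K1formula} I would apply Theorem~\ref{th:laplacerecurrence} not to $P$ but to the net $P_{-1}=\mathcal L_-P$ and then translate back. Since $\mathcal L_+\circ\mathcal L_-P(i,j)=P(i+1,j+1)$, the forwards transform of $P_{-1}$ is a translate of $P$, so the left-hand side $H_{(P_{-1})_1}$ is $H$ evaluated at a shifted argument, and by the identity $H(i,j)=K_1(i-1,j)$ of Lemma~\ref{lem:hk} this is a shifted $K_1$; likewise the two invariants $H_{-1}$ and $H_{-2}$ appearing on the right become a shifted $K$ and a shifted $K_{-1}$, again by Lemma~\ref{lem:hk} (the latter applied to $P_{-1}$). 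Collecting the shifts and solving for $K_{-1}(i,j)$ produces $K_{-1}(i,j)=K_1^{-1}(i,j)\,\tfrac{1-K(i,j+1)}{1-K^{-1}(i,j)}\,\tfrac{1-K(i+1,j)}{1-K^{-1}(i+1,j+1)}$, and a final substitution $K_1(i,j)=H(i+1,j)$ turns this into \eqref{eq:K1formula}. Equivalently, one may pass to the transposed net $Q(i,j):=P(j,i)$, observe $Q_m(i,j)=P_{-m}(j,i)$ together with $H_Q(i,j)=K(j,i)$ and $K_Q(i,j)=H(j,i)$, and apply the already-proved \eqref{eq:H1formula} to $Q$; this route makes the underlying $\mathcal L_+/H\leftrightarrow\mathcal L_-/K$ symmetry explicit and may read more cleanly.

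The content of the corollary is thus essentially a substitution, and the only step requiring genuine care — the one where a hasty computation is most likely to go wrong — is the index bookkeeping. The Laplace sequence carries a built-in shift whenever $\mathcal L_+$ and $\mathcal L_-$ alternate, so one must track base points through $\mathcal L_+\circ\mathcal L_-P(i,j)=P(i+1,j+1)$ and through both identities of Lemma~\ref{lem:hk} to be certain that every invariant in the final formula is evaluated at exactly the lattice point claimed. Beyond Theorem~\ref{th:laplacerecurrence} and Lemma~\ref{lem:hk}, no new geometric input is needed.
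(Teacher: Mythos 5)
Your proof is correct and follows essentially the same route as the paper, which simply substitutes the identities of Lemma~\ref{lem:hk} into Theorem~\ref{th:laplacerecurrence} and declares the result immediate. Your more careful derivation of \eqref{eq:K1formula} — applying the theorem to $P_{-1}$ (or to the transposed net) and tracking the shift $\mathcal L_+\circ\mathcal L_-P(i,j)=P(i+1,j+1)$ — checks out, with the two numerators appearing in the opposite order, which is harmless since only their product enters.
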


In the following, for any map $P \colon \Sigma \to \RP^n$, we use the notation
\begin{align}
	\pah{P} (j) := \mathrm{join}\{P(i,j) \mid i : (i,j) \in \Sigma \}, \qquad \pav{P}(i) := \mathrm{join}\{P(i,j) \mid j : (i,j) \in \Sigma)\},
\end{align}
and we call each $\pah{P} (j)$ and each $\pav P(i)$ a \emph{parameter space} of $P$.

Let us also look at the interplay of algebra, in the sense of Laplace invariants, and geometry. In particular, even though the Laplace sequence is defined recursively, there is also an explicit construction as follows.

\begin{lemma}\label{lem:expllaplacetransform}
	Let $P\colon \Sigma_{m,m} \to \RP^{2m}$ be an extensive Q-net. Suppose that $P_m$ exists. Then
	\begin{align}
		P_m(0,0) = \bigcap_{k = 0}^{m} \bigvee_{\ell=0}^{m} P(k,\ell) = \bigcap_{k = 0}^{m} \pav{P}(k).
	\end{align}
\end{lemma}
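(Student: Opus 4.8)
\textbf{Proof plan for Lemma~\ref{lem:expllaplacetransform}.}

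The plan is to unpack the definition of the iterated forward Laplace transform and show by induction that the intermediate net $P_k$ has the structure one expects, ultimately collapsing to the stated intersection of parameter spaces. First I would record the basic fact, immediate from Equation~\eqref{eq:laplaceforward}, that $\mathcal L_+ P(i,j) \subset P(i,j) \vee P(i,j+1)$, and more generally that $P_k(i,j)$ lies in the join $P(i, j) \vee P(i,j+1) \vee \dots \vee P(i,j+k)$; this is proven by an easy induction on $k$, since $P_{k+1}(i,j) \subset P_k(i,j) \vee P_k(i,j+1) \subset \big(\bigvee_{\ell=0}^k P(i,j+\ell)\big) \vee \big(\bigvee_{\ell=1}^{k+1} P(i,j+\ell)\big)$. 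Applied with $i=0$, $j=0$, $k=m$ this already gives the inclusion $P_m(0,0) \subset \bigvee_{\ell=0}^m P(0,\ell) = \pav P(0)$, i.e.\ one of the "$\subset$" halves; by the analogous statement read in the horizontal direction (using the index-shift identity $\mathcal L_+\circ \mathcal L_- = \mathrm{id}$ up to shift, or simply rerunning the argument symmetrically after observing $P_k(i,j) \subset P(i,j)\vee P(i+1,j)\vee\dots$ via \eqref{eq:laplaceforward} in the other slot) one gets $P_m(0,0) \subset \pav P(k)$ for each $k = 0, \dots, m$. Hence $P_m(0,0) \subset \bigcap_{k=0}^m \pav P(k)$, which is the easy inclusion.

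For the reverse inclusion it suffices, since $P_m(0,0)$ is a point, to show that the intersection $\bigcap_{k=0}^m \pav P(k)$ is at most a point (it is nonempty because it contains $P_m(0,0)$, which exists by hypothesis). This is where extensiveness enters. Because $P\colon\Sigma_{m,m}\to\RP^{2m}$ is extensive, its image joins all of $\RP^{2m}$, and more usefully every subpatch is extensive; in particular the $m{+}1$ parameter spaces $\pav P(0), \dots, \pav P(m)$ are each at most $m$-dimensional (each is the join of $m{+}1$ points $P(k,0),\dots,P(k,m)$), and I would argue that they are "in general position" in the sense that their joins grow maximally: $\pav P(0)\vee\dots\vee\pav P(r)$ has dimension $2m$ already for $r = 1$ if $m$-dimensional, but more to the point, for the dual count, $\dim\big(\bigcap_{k=0}^r \pav P(k)\big) \le m - r$. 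The cleanest way to see this is to use that the restriction of $P$ to the subpatch spanning columns $0,\dots,r$ and all rows is extensive, so those columns join a $(2m)$-dimensional (wait: $(m+r)$-dimensional) space; combining $\dim(\pav P(0)\vee\dots\vee\pav P(r)) = m+r$ with $\dim \pav P(k) = m$ for each $k$ and the inclusion–exclusion/Grassmann dimension formula inductively yields $\dim\big(\bigcap_{k=0}^r\pav P(k)\big)\le m-r$, hence for $r=m$ the intersection is at most $0$-dimensional, i.e.\ a single point. Combined with the first paragraph, $P_m(0,0) = \bigcap_{k=0}^m \pav P(k)$.

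The main obstacle I expect is the dimension-counting bookkeeping in the second paragraph: one must be careful that extensiveness of the relevant subpatches really does force the parameter spaces into sufficiently general position for the Grassmann formula $\dim(A\cap B) = \dim A + \dim B - \dim(A\vee B)$ to telescope correctly, and one should check the edge cases (e.g.\ that $\pav P(k)$ genuinely has dimension $m$ and not less, which again follows from extensiveness of a suitable $\Sigma_{0,m}$-subpatch after translation). An alternative, perhaps cleaner, route that avoids some of this is to prove the whole statement by induction on $m$ directly: assuming the formula for $P_{m-1}$ applied to suitable $\Sigma_{m-1,m-1}$ subpatches of $P$, one has $P_{m-1}(0,0) = \bigcap_{k=0}^{m-1}\pav{P}(k)$ restricted appropriately and $P_{m-1}(0,1) = \bigcap_{k=0}^{m-1}(\text{columns }0\dots m-1\text{, rows }1\dots m)$, and then $P_m(0,0) = \mathcal L_+ P_{m-1}(0,0) \subset P_{m-1}(0,0)\vee P_{m-1}(0,1)$, after which one checks that intersecting these two $0$-dimensional-up-to-correction data with the ambient structure recovers $\bigcap_{k=0}^m \pav P(k)$; I would present whichever of the two is shorter once the details are in hand, but I lean toward the direct dimension count as more transparent.
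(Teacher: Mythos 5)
Your first paragraph (the inclusion $P_m(0,0) \subset \bigcap_{k=0}^{m}\pav{P}(k)$) is essentially correct: iterating both factors of the intersection in \eqref{eq:laplaceforward} shows that $P_m(0,0)$ lies in $\bigvee_{\ell=0}^{m}P(k,\ell)$ for \emph{every} column index $k=0,\dots,m$ (this is about which column one tracks, not about a ``horizontal direction,'' but the induction is sound). The genuine gap is in the reverse inclusion. You reduce it to the claim $\dim\bigl(\bigcap_{k=0}^{r}\pav{P}(k)\bigr)\le m-r$ and assert that this follows from $\dim\pav{P}(k)=m$, $\dim\bigl(\pav{P}(0)\vee\dots\vee\pav{P}(r)\bigr)=m+r$ and the Grassmann formula. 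It does not telescope: Grassmann gives $\dim(A\cap B)=\dim A+\dim B-\dim(A\vee B)$, so to bound $\dim\bigl(A_r\cap\pav{P}(r+1)\bigr)$ from above, where $A_r=\bigcap_{k=0}^{r}\pav{P}(k)$, you need a \emph{lower} bound on $\dim\bigl(A_r\vee\pav{P}(r+1)\bigr)$, and the join dimensions you have only provide upper bounds. Concretely, for $m=2$ take three planes $L\vee Q_0$, $L\vee Q_1$, $L\vee Q_2$ in $\RP^4$ with $L$ a fixed line and $Q_0,Q_1,Q_2$ generic points: each plane is $2$-dimensional, each pairwise join is $3$-dimensional, the total join is all of $\RP^4$, yet the triple intersection is the line $L$ rather than a point. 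So the dimension data you invoke is consistent with the conclusion failing; the fact that such configurations cannot arise from the columns of an extensive Q-net is essentially the content of the lemma, not an input to it.

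The paper closes exactly this gap by the route you mention only as an afterthought and then discard: induction on $m$ over the four corner $\Sigma_{m-1,m-1}$ subpatches. Setting $E_0=\bigcap_{k=0}^{m-1}\pav{P}(k)$ and $L_0=P_{m-1}(0,0)\vee P_{m-1}(0,1)\subset E_0$, one argues that if $E_0$ were strictly larger than the line $L_0$, then its intersection with the join of the lower-left $\Sigma_{m-1,m-1}$ subpatch (which has codimension $1$ inside $\pav{P}(0)\vee\dots\vee\pav{P}(m-1)$ by extensivity) would be at least $1$-dimensional; but by the inductive hypothesis that intersection is the single point $P_{m-1}(0,0)$. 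This use of the inductive hypothesis on the corner subpatches is the genuinely Q-net-theoretic input your dimension count lacks. To repair your argument you would have to prove the bound $\dim A_r\le m-r$ by an induction that actually uses the planarity of the quads, at which point you will have reconstructed the paper's proof.
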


\proof{
	We give a proof by induction over $m$. The base case $m=1$ is evident from the definition of the Laplace transform $P_1$. 
	Consider the four restrictions $\SW P$, $\NW P$, $\SE P$, $\NE P$ of $P$ to the respective patches
	\begin{align}
		&\{(i,j) \mid i,j\in \Sigma_{m-1}\}, & &\{(i,j+1) \mid i,j\in \Sigma_{m-1}\},\\
		&\{(i+1,j) \mid i,j\in \Sigma_{m-1}\}, & &\{(i+1,j+1) \mid i,j\in \Sigma_{m-1}\}.
	\end{align}
	Through induction we obtain
	\begin{align}
		P_{m-1}(0,0) &= \bigcap_{k = 0}^{m-1} \pav{\SW P}(k), & P_{m-1}(0,1) &= \bigcap_{k = 0}^{m-1} \pav{\NW P}(k), \\
		P_{m-1}(1,0) &= \bigcap_{k = 0}^{m-1} \pav{\SE P}(k), & P_{m-1}(1,1) &= \bigcap_{k = 0}^{m-1} \pav{\NE P}(k).
	\end{align}
	Let us denote
	\begin{align}
		L_0 &= P_{m-1}(0,0) \vee P_{m-1}(0,1), & L_1 &= P_{m-1}(1,0) \vee P_{m-1}(1,1).
	\end{align}
	The definition of the Laplace transform states that
	\begin{align}
		P_m(0,0) = L_0 \cap L_1.
	\end{align}
	By induction, we obtain the inclusions
	\begin{align}
		L_0 &= \bigcap_{k = 0}^{m-1} \pav{\SW P}(k) \vee \bigcap_{k = 0}^{m-1} \pav{\NW P}(k) \subset \bigcap_{k = 0}^{m-1} \pav{P}(k) =: E_0, \label{eq:laplinclusionone}\\
		L_1 &= \bigcap_{k = 1}^{m} \pav{\SE P}(k) \vee \bigcap_{k = 1}^{m} \pav{\NE P}(k) \subset \bigcap_{k = 1}^{m} \pav{P}(k) =: E_1.\label{eq:laplinclusiontwo}
	\end{align}
	Combined, we get that 
	\begin{align}
		P_m(0,0) = L_0 \cap L_1 \subset E_0 \cap E_1 = \bigcap_{k = 0}^{m} \pav{P}(k). \label{eq:laplaceproof}
	\end{align}
	Thus, we have shown one inclusion direction of the claim. For the other direction, we need to show that $E_0 = L_0$ and $E_1 = L_1$. Let us assume that $E_0 \neq L_0$, and therefore that $E_0$ is more than $1$ dimensional. Consider the join of the image of $\SW P$ which we denote $\SW \pa P$. As $P$ is extensive, the codimension of  $\SW \pa P$ is 2. Therefore, the intersection of $\SW \pa P$ with $E_0$ is at least 1-dimensional. On the other hand, the intersection of  $\SW \pa P$ with $E_0$ coincides with
	\begin{align}
		\bigcap_{k = 0}^{m-1} \pav{\SW P}(k),
	\end{align}
	which by induction is the point $P_{m-1}(0,0)$, a 0-dimensional space. Thus, we have arrived at a contradiction and $E_0$ cannot be more than 1-dimensional. Consequently, $E_0$ equals $L_0$, and analogously one shows that $E_1$ equals $L_1$. Inserting this into \eqref{eq:laplaceproof} yields the claim.\qed
}

Of course, using Lemma~\ref{lem:expllaplacetransform} and Definition~\ref{defn:laplace} immediately leads to the following corollary, which will be useful later on.

\begin{corollary}\label{cor:Laplaceterminatemsteps}
	Let $P\colon \Sigma \to \RP^n$ be a $\Sigma_{m,m}$-extensive Q-net such that $P_{m}$ exists. Then $P_m$ is Laplace degenerate if and only if
	\begin{align}
		\bigcap_{k = i}^{i+m} \bigvee_{\ell=j}^{j+m} P(k,l)
	\end{align}
	is a point that is independent of $i$ for all $j$.
\end{corollary}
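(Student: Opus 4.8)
The plan is to deduce Corollary~\ref{cor:Laplaceterminatemsteps} directly from Lemma~\ref{lem:expllaplacetransform} together with Definition~\ref{defn:laplace}, keeping the argument essentially bookkeeping. Lemma~\ref{lem:expllaplacetransform} was stated for an extensive Q-net on $\Sigma_{m,m}$ with base point $(0,0)$; the first task is to apply it to each $\Sigma_{m,m}$-subpatch of $P$. So fix $(i,j)$ and consider the restriction $P'$ of $P$ to the subpatch $\{(i',j') \mid i \le i' \le i+m,\ j \le j' \le j+m\}$, reindexed to start at the origin. Since $P$ is $\Sigma_{m,m}$-extensive, $P'$ is extensive, and it joins at most a $2m$-dimensional space; embedding it into $\RP^{2m}$ (or just working inside the join, which is the only place the relevant constructions live) lets us invoke Lemma~\ref{lem:expllaplacetransform} for $P'$. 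This gives
\begin{align}
	P_m(i,j) = \bigcap_{k=i}^{i+m} \bigvee_{\ell=j}^{j+m} P(k,\ell) = \bigcap_{k=i}^{i+m} \pav{P}|_{j}^{j+m}(k),
\end{align}
where the intersection of parameter-space restrictions is understood over the subpatch. Here one should note that $P_m$ of the restriction agrees with the restriction of $P_m$ of the whole net, which is the "immediate observation" about lifts and Laplace sequences already recorded in the text, and holds trivially for subpatches as well.

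Next I would translate Definition~\ref{defn:laplace}(1) into this language: $P_m$ is Laplace degenerate precisely when $P_m(i,j)$ is independent of $i$ for every $j$. Substituting the closed-form expression above, this says exactly that $\bigcap_{k=i}^{i+m}\bigvee_{\ell=j}^{j+m}P(k,\ell)$ is a point independent of $i$ for all $j$ — which is the stated conclusion. The only genuine point to check is that the expression is a \emph{point} (i.e.\ $0$-dimensional) whenever $P_m$ exists: but $P_m$ exists by hypothesis and is a Q-net, so $P_m(i,j)$ is by definition a point of $\RP^n$, and Lemma~\ref{lem:expllaplacetransform} identifies that point with the intersection in question. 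So the dimension claim is automatic and does not require a separate genericity argument.

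The main (and only real) obstacle is the indexing mismatch: Lemma~\ref{lem:expllaplacetransform} is phrased for the origin-based patch $\Sigma_{m,m}$, while the corollary ranges over all $(i,j)$ in a possibly infinite domain $\Sigma$. I would handle this by an explicit translation/restriction argument as above, invoking that extensivity is inherited by subpatches (stated after Definition~\ref{defn:extensive}) and that Laplace transforms commute with restriction to subpatches (the Laplace transform of a subpatch is the corresponding subpatch of the Laplace transform, which is immediate from the local formula in Definition~\ref{def:laplace}). One small care: Lemma~\ref{lem:expllaplacetransform} literally assumes the ambient space is $\RP^{2m}$, so for a patch that joins fewer dimensions one should first remark that the statement is projectively natural and only involves joins and intersections within the joined subspace, hence applies verbatim with $E = \bigvee P'$ in place of $\RP^{2m}$; alternatively, apply the lift of Lemma~\ref{lem:lift} to push $P'$ into $\RP^{2m}$, use that lifts commute with Laplace sequences and with the parameter-space joins, and project back down — since central projection preserves incidences of the relevant subspaces, the identity descends. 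Either way the body of the proof is one substitution, so I expect the write-up to be short.

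\proof{
	Fix $(i,j) \in \Sigma$ such that the subpatch $\Sigma' := \{(i',j') \mid i \le i' \le i+m,\ j \le j' \le j+m\}$ lies in $\Sigma$, and let $P'$ be the restriction of $P$ to $\Sigma'$, reindexed so that $(i,j)$ becomes the origin. Since $P$ is $\Sigma_{m,m}$-extensive, $P'$ is an extensive Q-net, joining a subspace $E$ of dimension at most $2m$. Applying Lemma~\ref{lem:lift} (with $E$ in place of the ambient space if $\dim E < 2m$) we obtain an extensive lift $\hat P' \colon \Sigma_{m,m} \to \RP^{2m}$ with $\pi(\hat P') = P'$ for a suitable central projection $\pi$. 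As noted after Lemma~\ref{lem:lift}, $\hat P'_m$ is a lift of $P'_m$, and $\hat P'$ being extensive with $\hat P'_m$ existing, Lemma~\ref{lem:expllaplacetransform} gives
	\begin{align}
		\hat P'_m(0,0) = \bigcap_{k=0}^{m} \bigvee_{\ell=0}^{m} \hat P'(k,\ell).
	\end{align}
	Applying the projection $\pi$, which preserves joins and intersections of the subspaces involved, and undoing the reindexing, we conclude
	\begin{align}
		P_m(i,j) = \bigcap_{k=i}^{i+m} \bigvee_{\ell=j}^{j+m} P(k,\ell),
	\end{align}
	where we also used that the Laplace transform of the subpatch $P'$ agrees with the restriction of $P_m$, which is immediate from Definition~\ref{def:laplace}. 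In particular the right-hand side is a point whenever $P_m$ exists.

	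By Definition~\ref{defn:laplace}, $P_m$ is Laplace degenerate if and only if $P_m(i,j)$ is independent of $i$ for all $j$. By the identity just established, this holds if and only if $\bigcap_{k=i}^{i+m} \bigvee_{\ell=j}^{j+m} P(k,\ell)$ is a point independent of $i$ for all $j$, which is the claim. \qed
}
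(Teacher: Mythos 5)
Your proof is correct and takes the same route as the paper, which treats the corollary as immediate from Lemma~\ref{lem:expllaplacetransform} and Definition~\ref{defn:laplace} — exactly your substitution argument applied to each translated subpatch. The only remark worth making is that your lift detour is vacuous: $\Sigma_{m,m}$-extensivity forces each subpatch to join a space of dimension exactly $2m$, so Lemma~\ref{lem:expllaplacetransform} applies verbatim inside that join and the (delicate) claim that central projection preserves the intersections never needs to be invoked.
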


See Figure~\ref{fig:koenigslaplace} for an illustration of Corollary~\ref{cor:Laplaceterminatemsteps} in the cases $m=1$ and $m=2$. Although the Q-net in Figure~\ref{fig:koenigslaplace} is not $\Sigma_{2,2}$-extensive, it can be lifted to a $\Sigma_{2,2}$-extensive net.

\begin{corollary}\label{cor:Laplacedegeneracy}
    Let $P\colon \Sigma \to \RP^n$ be a $\Sigma_{m,m}$-extensive Q-net such that $P_{m}$ exists, and let $d < m$. If the intersection
    \begin{align}
        \bigcap_{k = i}^{i+m} \bigvee_{\ell=j}^{j+m} P(k,l),
    \end{align}
    is a non-empty $d$-dimensional projective subspace that is independent of $i$ for all $j$, then $P_{m-d}$ is Laplace degenerate (assuming it exists).
\end{corollary}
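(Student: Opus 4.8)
The plan is to reduce the statement to Corollary~\ref{cor:Laplaceterminatemsteps} by means of an explicit description of the $d$-dimensional intersection in terms of the points of $P_{m-d}$.

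The heart of the argument is the structural identity
\[
  \bigcap_{k=i}^{i+m}\bigvee_{\ell=j}^{j+m}P(k,\ell)\;=\;\bigvee_{\ell=j}^{j+d}P_{m-d}(i,\ell),
\]
valid whenever $P$ is $\Sigma_{m,m}$-extensive and $P_{m-d}$ exists. This generalizes Lemma~\ref{lem:expllaplacetransform} (the case $d=0$), and I would prove it by the same induction on $m$: splitting $P$ into the four width-$(m-1)$ subpatches $\SW P,\NW P,\SE P,\NE P$, the left-hand side becomes the intersection of two joins $L_0,L_1$ of $(m-1)$-st transformed points, and an extensivity/codimension count of the type used to get $E_0=L_0$, $E_1=L_1$ in the proof of Lemma~\ref{lem:expllaplacetransform} identifies those joins with the asserted joins of $P_{m-d}$-points. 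The only new feature compared with $d=0$ is that one now tracks $(d+1)$-point joins instead of single points; no new idea is needed, just heavier bookkeeping of dimensions.

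Granting the identity, set $E_j:=\bigvee_{\ell=j}^{j+d}P_{m-d}(i,\ell)$. By hypothesis each $E_j$ is $d$-dimensional and independent of $i$, so $\langle P_{m-d}(i,j),\dots,P_{m-d}(i,j+d)\rangle=E_j$ for all $i$. Since the point $P_{m-d}(i,j)$ is one of the $d+1$ spanning points of each of the $d+1$ spaces $E_{j-d},E_{j-d+1},\dots,E_j$, it lies in $\bigcap_{a=0}^{d}E_{j-a}$, and this intersection does not depend on $i$. A dimension count shows it is a single point: each successive intersection $E_j\cap E_{j-1}\cap\dots\cap E_{j-a}$ is spanned by the $d+1-a$ points $P_{m-d}(i,j),\dots,P_{m-d}(i,j+d-a)$, hence drops dimension by exactly one at each step, provided that $d+2$ consecutive points of a column of $P_{m-d}$ always span a $(d+1)$-dimensional space. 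Therefore $P_{m-d}(i,j)$ is independent of $i$, i.e.\ $P_{m-d}$ is Laplace degenerate; equivalently, one concludes via Corollary~\ref{cor:Laplaceterminatemsteps}.

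I expect the main obstacle to be making the two dimension counts precise: first, in proving the structural identity, that the relevant joins of $P_{m-d}$-points have the full expected dimension — this is where $\Sigma_{m,m}$-extensivity of $P$ and extensivity of its restrictions enter, by tracing the points back to column-strips of $P$; and second, the ``general position'' of the $d+1$ parameter spaces $E_{j-d},\dots,E_j$ in the last step. The borderline case in which consecutive $E_{j'}$ coincide corresponds to $P_{m-d}$ collapsing even further than a single curve; it is harmless but should be isolated and dispatched separately.
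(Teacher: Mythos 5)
Your overall strategy---identify the $d$-dimensional intersection with the join of $d+1$ consecutive points of a column of $P_{m-d}$, then recover each individual point as the intersection of $d+1$ consecutive such joins---is sound, and your final step (the dimension count showing that $\bigcap_{a=0}^{d}E_{j-a}$ is the single point $P_{m-d}(i,j)$, hence independent of $i$) is correct modulo the general-position caveat you yourself flag. Since the paper's own proof is a one-sentence appeal to induction, you are supplying genuinely missing content here.

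However, the central structural identity is false as stated, and the proposed proof of it cannot work. For a generic $\Sigma_{m,m}$-extensive $P$ (which certainly has $P_{m-d}$ well defined), Lemma~\ref{lem:expllaplacetransform} says that the left-hand side $\bigcap_{k=i}^{i+m}\bigvee_{\ell=j}^{j+m}P(k,\ell)$ is the \emph{single point} $P_m(i,j)$, whereas the right-hand side $\bigvee_{\ell=j}^{j+d}P_{m-d}(i,\ell)$ is $d$-dimensional; so the identity is not ``valid whenever $P$ is $\Sigma_{m,m}$-extensive and $P_{m-d}$ exists''. The obstruction is the inclusion $P_{m-d}(i,\ell)\in\bigvee_{\ell'=j}^{j+m}P(k,\ell')$ for the last $d$ columns $k=i+m-d+1,\dots,i+m$: the point $P_{m-d}(i,\ell)$ is built only from columns $i,\dots,i+m-d$, and its membership in the parameter spaces of the remaining columns is essentially equivalent to the Laplace degeneracy you are trying to prove, so running ``the same induction'' as in Lemma~\ref{lem:expllaplacetransform} is circular precisely there. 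The repair is to prove unconditionally (by the Lemma~\ref{lem:expllaplacetransform}-style argument) the identity for the \emph{truncated} intersection, namely $\bigcap_{k=i}^{i+m-d}\bigvee_{\ell=j}^{j+m}P(k,\ell)=\bigvee_{\ell=j}^{j+d}P_{m-d}(i,\ell)=:E_j$, and only then bring in the hypothesis: writing $I_j$ for the intersection in the statement, one has $I_j\subseteq E_j$ because intersecting over fewer columns enlarges the set, and $\dim I_j=d\geq\dim E_j$ then forces $I_j=E_j$. With that corrected identity, the remainder of your argument goes through.
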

\proof{
     The case $d=0$ is Corollary~\ref{cor:Laplaceterminatemsteps}. The case $d>1$ can be shown by induction. \qed
}

There is also a useful algebraic characterization of Q-nets that have a Laplace degenerate Laplace transform, which we give in the following lemma.

\begin{lemma}\label{lem:Laplaceterminatemstepsalgebraic}
	Let $P\colon \Sigma \to \R \mathrm{P}^n$ be a non-degenerate Q-net. Then,
	\begin{enumerate}
		\item $P_{1}$ is  Laplace degenerate if and only if $H(i,j) = 1$ for all $i,j$,
		\item $P_{-1}$ is Laplace degenerate if and only if $K(i,j) = 1$ for all $i,j$.
	\end{enumerate}
\end{lemma}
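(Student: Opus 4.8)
The plan is to prove both statements by a direct cross-ratio computation using the definition of the Laplace invariant $H$ (and symmetrically $K$), since Laplace degeneracy of $P_1$ is a purely local, one-quad-at-a-time condition. Recall that $H(i,j) = \cro(P(i,j), P_1(i,j), P(i,j+1), P_1(i-1,j))$, where all four points lie on the line $P(i,j)\vee P(i,j+1)$, and that $P_1$ being Laplace degenerate means $P_1(i,j) = P_1(i+1,j)$ for all $i,j$, i.e.\ the points of $P_1$ do not depend on the first coordinate.

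First I would set up the geometry on a single strip. Fix $j$ and consider the line $\ell_i := P(i,j)\vee P(i,j+1)$. By definition $P_1(i,j) \in \ell_i$ is the intersection of $\ell_i$ with $\ell_{i+1}$, and $P_1(i-1,j) \in \ell_i$ is the intersection of $\ell_i$ with $\ell_{i-1}$. Now suppose $P_1$ is Laplace degenerate, so $P_1(i,j) = P_1(i+1,j)$ for all $i$; denote this common point by $X_j$. Since $P_1(i,j) = \ell_i \cap \ell_{i+1}$ and $P_1(i+1,j) = \ell_{i+1}\cap\ell_{i+2}$, the equality $P_1(i,j)=P_1(i+1,j)$ forces $X_j \in \ell_i$ for every $i$ — equivalently, all the lines $\ell_i$ pass through the single point $X_j$ (this is exactly the statement that $P_1(\cdot,j)$ collapses to a point, which is the pencil of lines through $X_j$). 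In particular $P_1(i-1,j) = \ell_i\cap\ell_{i-1} = X_j = P_1(i,j)$. But then $H(i,j) = \cro(P(i,j), X_j, P(i,j+1), X_j)$, and a cross-ratio with its second and fourth arguments equal is $1$ (directly from the determinant formula: $\det(p_2,p_3)\det(p_4,p_1)$ versus $\det(p_1,p_2)\det(p_3,p_4)$ — when $p_2 = p_4$ the two products agree up to sign, and one checks the normalization gives exactly $1$; more cleanly, $\cro(A,B,C,B) = 1$ is a standard cross-ratio identity). Hence $H(i,j)=1$ for all $i,j$.

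Conversely, suppose $H(i,j) = 1$ for all $i,j$. On the line $\ell_i$, the three points $P(i,j)$, $P(i,j+1)$, $P_1(i-1,j)$ are pairwise distinct (by non-degeneracy and, for $P_1(i-1,j)$, the genericity inherent in $P_1$ being a well-defined Q-net; if $P_1(i-1,j)$ coincided with one of the surface points the Laplace transform would already be degenerate). The map $Y \mapsto \cro(P(i,j), Y, P(i,j+1), P_1(i-1,j))$ is a projective coordinate on $\ell_i$, hence injective, so the condition $\cro(P(i,j), P_1(i,j), P(i,j+1), P_1(i-1,j)) = 1$ determines $P_1(i,j)$ uniquely on $\ell_i$. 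I claim the unique solution is $P_1(i,j) = P_1(i-1,j)$: indeed plugging $Y = P_1(i-1,j)$ gives $\cro(A,B,C,B)=1$ as above. Therefore $P_1(i,j) = P_1(i-1,j)$ for all $i$ (and all $j$), which says precisely that $P_1$ is independent of the first coordinate, i.e.\ Laplace degenerate. The second statement follows by the same argument applied to $K$ and $\mathcal{L}_-$, or more slickly by the symmetry exchanging the two coordinate directions together with Lemma~\ref{lem:hk}, which identifies $K$ up to index shift with $H_{-1}$.

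The only genuinely delicate point — and the one I would state carefully rather than wave past — is the non-degeneracy bookkeeping in the converse direction: one must know that $P_1$ is a well-defined, and in fact one should be slightly careful that ``$H(i,j)=1$'' is even meaningful, i.e.\ that the four defining points are not in a degenerate configuration making the cross-ratio undefined. Since the hypothesis is that $P$ is non-degenerate and $H$ is (by Definition~\ref{def:discreteLaplaceinvariants}) a well-defined real number, these configurations are excluded by assumption, so the argument goes through; but it is worth a sentence. Everything else is the standard cross-ratio identity $\cro(A,B,C,B)=1$ and the injectivity of a cross-ratio as a function of one of its points, so the proof should be short.
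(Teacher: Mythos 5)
Your proof is correct and follows essentially the same route as the paper's: both arguments reduce to the fact that a cross-ratio equals $1$ exactly when the first and third or the second and fourth arguments coincide, and then use non-degeneracy of $P$ to rule out $P(i,j)=P(i,j+1)$, leaving $P_1(i,j)=P_1(i-1,j)$. One small tidy-up: the pairwise distinctness of $P(i,j)$, $P(i,j+1)$, $P_1(i-1,j)$ that you attribute to ``genericity inherent in $P_1$'' already follows from condition~2 of Definition~\ref{def:nondegqnet} (three vertices of a face span a plane), so no extra assumption is needed there.
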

\begin{proof}
	Let us prove the claim for $P_1$. Since $H(i,j)$ is defined as a cross-ratio, $H(i,j) = 1$ is only possible if
	\begin{align}
		P(i,j) =  P(i,j+1) \qquad \mbox{or} \qquad P_{1}(i,j) = P_{1}(i-1,j).
	\end{align}
	However, $P(i,j)$ does not equal $P(i,j+1)$ because $P$ is a non-degenerate $Q$-net. Therefore $P_{1}(i,j)$ equals $P_{1}(i-1,j)$ and thus $P_1$ is Laplace degenerate. The proof for $P_{-1}$ is analogous.
\end{proof}

An immediate consequence of Lemma~\ref{lem:Laplaceterminatemstepsalgebraic} is that if $P_1$ is Laplace degenerate then every lift $\hat P_1$ is also Laplace degenerate, since the Laplace invariants do not change when lifting.
Next we consider a geometric characterization of Goursat degenerate Q-nets \cite{fairley2023thesis,bobenkofairley2023circularnets}.

\begin{lemma}\label{lem:Goursatterminatemsteps}
	Consider a $\Sigma_{m,m}$-extensive Q-net $P \colon \Sigma \to \RP^n$ such that $P_{m}$ exists and is nowhere Laplace degenerate. Then $P_{m}$ is Goursat degenerate if and only if the parameter spaces $\pav P(i)$ are $m$-dimensional.
\end{lemma}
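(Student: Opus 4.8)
The plan is to reduce the Goursat-degeneracy statement to a dimension-count on the parameter spaces $\pav P(i)$, using the explicit description of the $m$-th Laplace transform from Lemma~\ref{lem:expllaplacetransform} together with the lift technology of Section~\ref{section:liftsofQnets}. First I would reduce to the extensive case: by Lemma~\ref{lem:lift} we may pass to a lift $\hat P \colon \Sigma \to \RP^N$ which is $\Sigma_{m,m}$-extensive, and since lifts preserve Laplace invariants, central projections, and parameter-space dimensions (a central projection cannot increase dimension, and the extensivity of $\hat P$ forces each $\pav{\hat P}(i)$ to project onto $\pav P(i)$ without drop in the relevant range), the statement for $P$ follows from the statement for $\hat P$. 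So assume $P$ itself is $\Sigma_{m,m}$-extensive.

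Next I would unwind what Goursat degeneracy of $P_m$ means in this setting. By Definition~\ref{defn:goursat}, $P_m$ is Goursat degenerate iff $P_m(i,j)$ is independent of $j$ (and $P_m$ is nowhere Laplace degenerate, which is a hypothesis). Now apply Lemma~\ref{lem:expllaplacetransform} in a shifted form: for each $i,j$ in the appropriate range,
\begin{align}
	P_m(i,j) = \bigcap_{k=i}^{i+m} \pav{P}(k),
\end{align}
so that the value $P_m(i,j)$ does not depend on $j$ at all. The content is therefore entirely about the $i$-dependence, i.e.\ about whether the $m$-fold intersections $\bigcap_{k=i}^{i+m}\pav P(k)$ are points that genuinely move with $i$ (Goursat) versus collapse to a single point (Laplace). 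The claim is that this "genuine movement, always a single point" behaviour is equivalent to each individual $\pav P(i)$ being exactly $m$-dimensional.

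For the forward direction, suppose $P_m$ is Goursat degenerate, hence in particular $P_m$ exists so each intersection $\bigcap_{k=i}^{i+m}\pav P(k)$ is a single point, and nowhere Laplace degenerate so $\bigcap_{k=i}^{i+m}\pav P(k) \neq \bigcap_{k=i+1}^{i+m+1}\pav P(k)$ for all $i$. Each $\pav P(i)$ has dimension at least $1$ (two distinct points, by non-degeneracy) and at most $m$ (by the dimension bound in Section~\ref{section:liftsofQnets}, since $\pav P(i)$ is joined by $m+1$ points lying in a column and the full net joins a $2m$-space — more carefully, one uses that $P$ restricted to any $\Sigma_{m,m}$ patch is extensive, so the column $\pav P(i)$ together with a transversal row joins $2m$ dimensions, forcing $\dim \pav P(i) \leq m$). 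The real work is to exclude $\dim\pav P(i) < m$: if some $\pav P(i_0)$ had dimension $d < m$, I would argue via Corollary~\ref{cor:Laplacedegeneracy} — or rather a sharpened local version of the inclusions in the proof of Lemma~\ref{lem:expllaplacetransform} — that the intersections stabilize, i.e.\ $\bigcap_{k=i}^{i+m}\pav P(k) = \bigcap_{k=i+1}^{i+m+1}\pav P(k)$, contradicting nowhere-Laplace-degeneracy; the point is that a too-small parameter space propagates a coincidence among the $m$-fold intersections. Conversely, if every $\pav P(i)$ is exactly $m$-dimensional, I would show each $\bigcap_{k=i}^{i+m}\pav P(k)$ is a point (generic-dimension count: $m+1$ subspaces of dimension $m$ inside $\RP^{2m}$, in the extensive/general position guaranteed by $\Sigma_{m,m}$-extensivity, intersect in dimension $0$) and that consecutive such points differ (else two of the $\pav P(k)$ would have an unexpectedly large intersection, violating extensivity), so $P_m$ is a genuine curve in the $j$-independent direction, i.e.\ Goursat degenerate.

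The main obstacle I expect is the dimension bookkeeping in the "$d<m$ is impossible" step of the forward direction: one must convert "$\pav P(i_0)$ is too small" into "the $m$-fold column intersections coincide for consecutive $i$", which requires carefully tracking the inclusions $L_0 \subset E_0$, $L_1 \subset E_1$ from the proof of Lemma~\ref{lem:expllaplacetransform} and arguing that the failure of the equality $E_\bullet = L_\bullet$ is exactly what a deficient parameter space produces — essentially an induction on $m$ mirroring that proof, where the deficiency of one column forces $E_0 = L_0$ to fail and hence the recursive collapse. Everything else is either a direct invocation of Lemma~\ref{lem:expllaplacetransform} and Corollary~\ref{cor:Laplacedegeneracy}, or a routine general-position count using $\Sigma_{m,m}$-extensivity.
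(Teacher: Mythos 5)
There is a genuine gap, and it originates in your use of Lemma~\ref{lem:expllaplacetransform}. The shifted form of that lemma reads
\begin{align}
P_m(i,j)=\bigcap_{k=i}^{i+m}\ \bigvee_{\ell=j}^{j+m}P(k,\ell),
\end{align}
i.e.\ the inner join runs over the \emph{window} of $m+1$ consecutive column points, not over the full parameter space $\pav{P}(k)$ (the join over \emph{all} $j$ in the domain); the two coincide only when the domain is exactly $\Sigma_{m,m}$. By replacing the windowed join with $\pav{P}(k)$ you conclude that $P_m(i,j)$ is automatically independent of $j$ for \emph{every} $\Sigma_{m,m}$-extensive Q-net, which is false (it would make every such net degenerate after $m$ steps) and which is essentially the conclusion of the ``if'' direction you are trying to prove. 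The whole content of the lemma lives in the $j$-dependence of the windowed joins $Z(i,j):=\bigvee_{\ell=j}^{j+m}P(i,\ell)$, which your formula erases.

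This error then propagates. In the forward direction you set out to exclude $\dim\pav{P}(i)<m$, but that case cannot occur: $\Sigma_{m,m}$-extensivity already forces each window $Z(i,j)$, hence each $\pav{P}(i)\supseteq Z(i,0)$, to have dimension at least $m$. The inequality that actually requires work is $\dim\pav{P}(i)\le m$, i.e.\ that all windows along a column coincide. The paper's proof obtains this directly from Goursat degeneracy: $Z(i,j)$ and $Z(i,j+1)$ share the $m$ points $P(i,j+1),\dots,P(i,j+m)$ together with the common point $P_m(i,j)=P_m(i,j+1)$, and by extensivity these $m+1$ points join an $m$-space, so the two $m$-dimensional windows are equal. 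Your proposal contains no argument addressing this, so the forward direction is missing its key step. (Two smaller issues: the opening reduction via a lift is both unnecessary --- $\Sigma_{m,m}$-extensivity is already a hypothesis --- and unsafe, since by Lemma~\ref{lem:goursatliftstolaplace} lifting does \emph{not} preserve Goursat degeneracy but converts it into a Laplace degeneration one step later.) The converse direction is repairable along the lines you sketch, because $\dim\pav{P}(i)=m$ forces $Z(i,j)=\pav{P}(i)$ for all $j$, after which Lemma~\ref{lem:expllaplacetransform} and the nowhere-Laplace-degenerate hypothesis finish the argument; but the forward direction needs to be redone.
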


\proof{
	Consider the spaces
   \begin{align}
		Z(i,j) := \bigvee_{\ell=j}^{j+m} P(i,l).
	\end{align}
	Assume the parameter spaces $\pav P(i)$ are $m$-dimensional. Consequently, $Z(i,j)$ is independent of $j$ since each $Z(i,j)$ is $m$-dimensional by $\Sigma_{m,m}$-extensivity of $P$. Then, using Lemma~\ref{lem:expllaplacetransform} we see that $P_m(i,j)$ is independent of $j$. As we assume that $P_m$ is nowhere Laplace degenerate, it follows that $P_m$ is Goursat degenerate. 	
    
	Conversely, assume $P_m$ is Goursat degenerate. Then, $Z(i,j)$ contains $P_m(i,k)$ for all $k$. Thus, $Z(i,j)$ and $Z(i,j+1)$ have $m+1$ points in common, namely 
	\begin{align}
		P(i,j +1), P(i,j +2), \dots, P(i,j+m) \quad \mbox{and}\quad P_m(i,j) = P_m(i,j+1).
	\end{align}
    By $\Sigma_{m,m}$-extensivity of $P$, the join of the $m+1$ points is $m$-dimensional.
	Therefore, $Z(i,j)$ and $Z(i,j+1)$ are equal to the same $m$-space. It follows that $Z(i,j)$ is independent of $j$. Then, each $\pav P(i)$ is  $m$-dimensional.
\qed
}

\begin{figure}
	\centering
	\includegraphics[width=.8\textwidth]{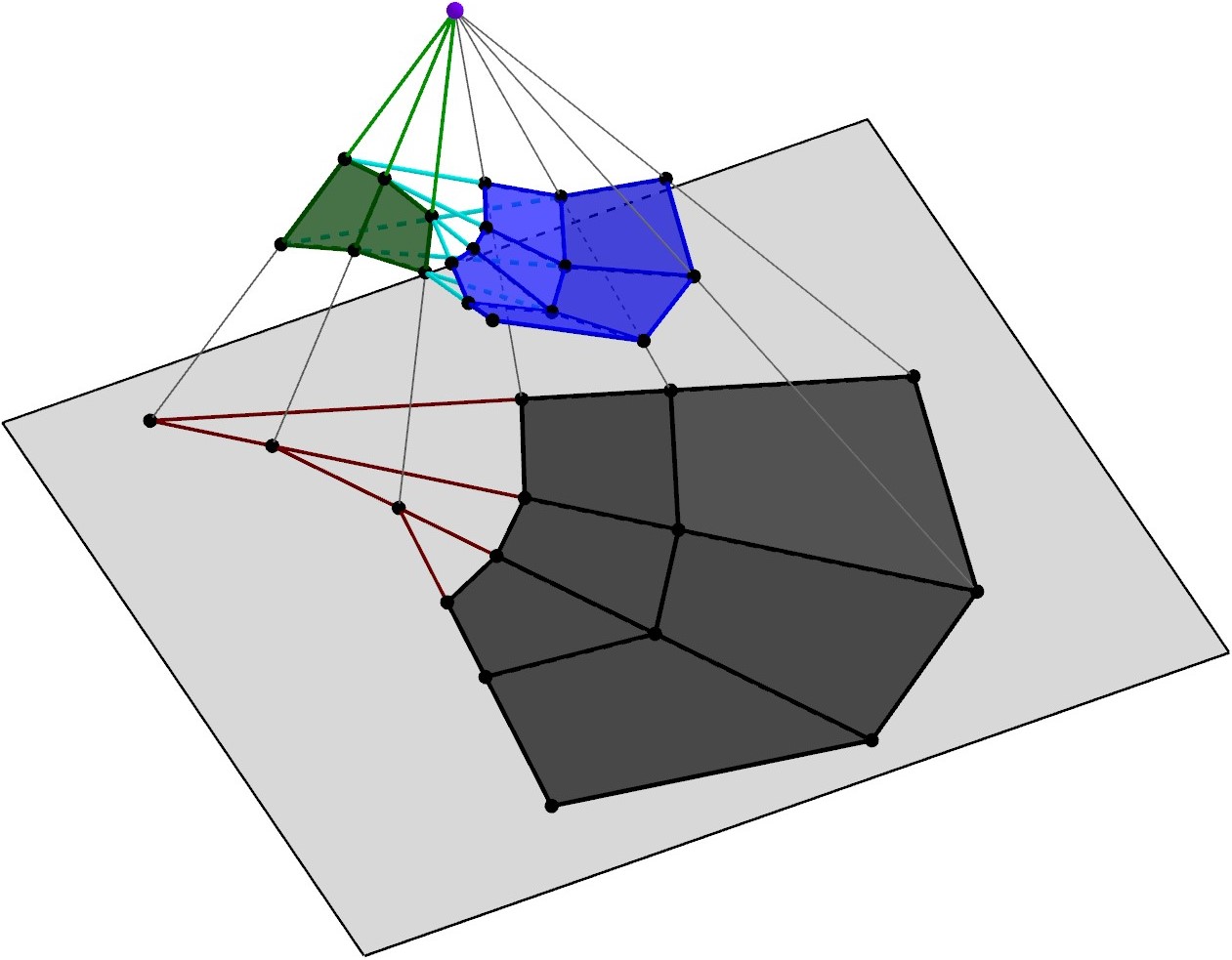}
	\caption{The lift $\hat P$ (blue) of a Q-net $P$ (black) with Goursat degenerate Laplace transform $P_1$. The lift $\hat P_1$ (green) is Laplace degenerate, with point of concurrency $\hat P_2$ equal to the center of projection $C$ (violet).}
	\label{fig:laplacelift}
\end{figure}

From Lemma~\ref{lem:Goursatterminatemsteps} we see that if $P$ is a Q-net with $m$-dimensional parameter lines then $P_m$ is Goursat degenerate. Note that we made no constraints on the parameter lines $\pah{P}(j)$ in Lemma~\ref{lem:Goursatterminatemsteps}.
Another consequence of Lemma~\ref{lem:Goursatterminatemsteps} is that the dimension that $P$ can span is limited if $P_m$ is Goursat degenerate. In particular, if $P$ is defined on $\Sigma_{a,b}$ then $P$ can be at most $\Sigma_{a,m}$-extensive. Recall that in Lemma~\ref{lem:lift} we showed that non-extensive Q-nets can be lifted to extensive Q-nets. However, in that lemma we assumed non-degeneracy. Next, we show how to lift Q-nets with a Goursat degeneration in the Laplace sequence.

\begin{lemma}
	Let $P \colon \Sigma_{a,b} \to \RP^n$ be a non-degenerate Q-net such that $P_1$ is Goursat degenerate, and let $\hat P$ be a lift of $P$. Then $\hat P_1$ is non-degenerate.
\end{lemma}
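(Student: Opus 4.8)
The claim is that if $P\colon\Sigma_{a,b}\to\RP^n$ is non-degenerate with $P_1$ Goursat degenerate, then any lift $\hat P$ of $P$ has $\hat P_1$ non-degenerate. The plan is to show that the lift $\hat P$, although it agrees with $P$ after projection, is genuinely ``bigger'' in the second direction, and this extra dimension is exactly what prevents $\hat P_1$ from degenerating. First I would recall (Lemma~\ref{lem:Goursatterminatemsteps} with $m=1$, or directly from the definition) that $P_1$ being Goursat degenerate means that $P_1(i,j)$ is independent of $j$, i.e.\ $P(i,j)\vee P(i,j+1)$ is independent of $j$, and moreover $P_1$ is nowhere Laplace degenerate, so $P_1(i,j)\neq P_1(i+1,j)$. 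In particular each parameter space $\pav P(i) = P(i,0)\vee P(i,1)$ is a line, and the whole net $P$ lies in the single plane $\pav P(0)\vee\pav P(1)\vee\cdots$; actually the constancy in $j$ forces $P$ to span a much smaller space than a generic Q-net on $\Sigma_{a,b}$ would.

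Next I would analyze the lift $\hat P$. By construction (as in the proof of Lemma~\ref{lem:lift}, or the remark about lifts), $\pi(\hat P)=P$ with center $C$ supplementary to $E = \join P$, and $\hat P$ is extensive. The point is that in the lift the parameter spaces $\pav{\hat P}(i)$ need no longer be constant in $j$: I claim they are genuinely $2$-dimensional for each $i$ once $b\geq 2$, or more precisely that the relevant joins $\hat P(i,j)\vee\hat P(i,j+1)$ now \emph{do} depend on $j$. This is where extensivity of $\hat P$ enters: an extensive Q-net on $\Sigma_{a,b}$ spans $a+b$ dimensions, so its $\Sigma_{1,1}$-subpatches are non-degenerate and its $\Sigma_{a,b}$-patch is maximal-dimensional, and one checks that the two lines $\hat P(i,0)\vee\hat P(i,1)$ and $\hat P(i,1)\vee\hat P(i,2)$ are distinct. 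Then $\hat P_1(i,0) = (\hat P(i,0)\vee\hat P(i,1))\cap(\hat P(i+1,0)\vee\hat P(i+1,1))$ and $\hat P_1(i,1) = (\hat P(i,1)\vee\hat P(i,2))\cap(\hat P(i+1,1)\vee\hat P(i+1,2))$ are obtained by intersecting different lines, so they have a fighting chance to be distinct; I would verify non-degeneracy of $\hat P_1$ by checking the two conditions of Definition~\ref{def:nondegqnet}: no two adjacent points of $\hat P_1$ coincide, and any three points of a face of $\hat P_1$ span a plane.

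The cleanest route to verify these is probably to reduce to a dimension count. Consider the $\Sigma_{2,2}$ subpatch view: the four lines $\pav{\hat P}(i),\pav{\hat P}(i+1),\pah{\hat P}(j),\ldots$ together with extensivity pin down $\hat P_1$ via Lemma~\ref{lem:expllaplacetransform}-type intersections, and since $\hat P$ spans strictly more than $P$ (because $\pi$ collapses $C$ and $C$ meets the relevant joins trivially), the degeneracy that occurred downstairs cannot be reproduced upstairs. Concretely: if $\hat P_1(i,0)=\hat P_1(i,1)$, then projecting down would give $P_1(i,0)=P_1(i,1)$ (true, $P_1$ is Goursat degenerate) --- so that direction gives no contradiction directly; instead I would argue that equality upstairs would force the four points $\hat P(i,0),\hat P(i,1),\hat P(i,2)$ and $\hat P(i+1,0)$ to be coplanar in a way that contradicts extensivity of the $\Sigma_{1,2}$ subpatch. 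Similarly the no-three-collinear condition for faces of $\hat P_1$ follows from extensivity of the appropriate $\Sigma_{2,2}$ or $\Sigma_{2,1}$ subpatches of $\hat P$, using that $C$ is supplementary to $E$.

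\textbf{Main obstacle.} The delicate point is ruling out the ``accidental'' degeneracies of $\hat P_1$ --- namely showing $\hat P_1(i,j)\neq\hat P_1(i,j+1)$ (nowhere Laplace degeneracy survives the lift) and that $\hat P_1$ has no three collinear points on a face. These do not follow from projecting down, since downstairs $P_1$ \emph{is} Laplace degenerate in the $j$-direction after another step would be, so the argument must genuinely use the extra dimension and the supplementarity of $C$ and $E$; I expect this is where the bulk of the work lies, and it will likely proceed by assuming such a degeneracy and deriving that some $\Sigma_{c,d}$ subpatch of $\hat P$ fails to span $c+d$ dimensions, contradicting extensivity of $\hat P$ (which was guaranteed by Lemma~\ref{lem:lift} applied in the appropriate setting, or rather by the construction of the lift adapted to the Goursat-degenerate case).
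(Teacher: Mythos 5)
Your plan follows essentially the same route as the paper's proof: one checks the two conditions of Definition~\ref{def:nondegqnet} on each quad of $\hat P_1$, getting distinctness in the $i$-direction for free from the nowhere-Laplace-degeneracy of $P_1$, and using extensivity of $\hat P$ to see that the consecutive lines $\hat P(i,j)\vee\hat P(i,j+1)$ and $\hat P(i,j+1)\vee\hat P(i,j+2)$ are distinct. The ``main obstacle'' you flag is resolved exactly as you anticipate: since $\hat P_1(i,j),\hat P_1(i+1,j)$ lie on the first of these lines and $\hat P_1(i,j+1)$ on the second, and the two lines meet only in $\hat P(i,j+1)$ (which is none of these Laplace points by non-degeneracy of $P$), the three points of a face of $\hat P_1$ span a plane.
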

\proof{
	To show that the lift $\hat P_1$ is non-degenerate, it suffices to show the two conditions of Definition~\ref{def:nondegqnet} are satisfied for each quad. Thus, without loss of generality let us show the conditions for the quad involving the four points
	\begin{align}
		\hat P_1(0,0), \hat P_1(1,0), \hat P_1(1,1), \hat P_1(0,1).
	\end{align}
	Recall that by the definition of Goursat degeneracy we know that $P_1(0,0) \neq P_1(1,0)$. Moreover, because $P$ is non-degenerate neither of those two points coincides with $P(1,1)$. Consequently, the same holds for the lifted points, that is $\hat P_1(0,0)$, $\hat P_1(1,0)$ and $\hat P(1,1)$ are pairwise distinct. Also, by definition of the Laplace transform the four points
	\begin{align}
		\hat P(1,0), \hat P(1,1), \hat  P_1(0,0), \hat P_1(1,0),
	\end{align}
	are on a line, and the points
	\begin{align}
		\hat P(1,1), \hat P(1,2), \hat  P_1(0,1), \hat P_1(1,1),
	\end{align}
	are also on a line. Since $\hat P$ is extensive, the two lines need to be different (as otherwise $\hat P(1,0)$, $\hat P(1,1)$ and $\hat P(1,2)$ would not join a plane).
	As a further consequence, $\hat P_1(0,1)$ is not on the line joined by $\hat P_1(0,0)$ and $\hat P_1(1,0)$. As a result, the three points $\hat P_1(0,0)$, $\hat P_1(1,0)$ and  $\hat P_1(0,1)$ join a plane. This is enough to show that $\hat P_1$ is non-degenerate. \qed
}

Moreover, it turns out that Goursat degeneracy is actually closely related to Laplace degeneracy, as we show in the next lemma.

\begin{lemma}\label{lem:goursatliftstolaplace}
	Let $P\colon \Sigma_{a,b} \to \RP^n$ be a non-degenerate Q-net such that $P_1$ is Goursat degenerate, and let $\hat P$ be a lift of $P$. Then $\hat P_2$ is Laplace degenerate.
\end{lemma}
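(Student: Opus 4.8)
The plan is to reduce the statement to the geometric characterization of Goursat degeneracy in Lemma~\ref{lem:Goursatterminatemsteps} together with the explicit description of iterated Laplace transforms in Lemma~\ref{lem:expllaplacetransform}. First I would observe that it suffices to prove the claim locally, so without loss of generality we work with the $\Sigma_{2,2}$ subpatch of $\hat P$ based at $(0,0)$, which is extensive. The key point is that since $P_1$ is Goursat degenerate, by Lemma~\ref{lem:Goursatterminatemsteps} (applied with $m=1$) the parameter spaces $\pav{P}(i)$ are $1$-dimensional — but that's automatic; what matters is the lifted picture: I will show that the lifted net $\hat P$ has the property that the three parameter lines $\pav{\hat P}(0)$, $\pav{\hat P}(1)$, $\pav{\hat P}(2)$ all pass through a common point, namely the center of projection $C$ (see Figure~\ref{fig:laplacelift}).

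The core of the argument is therefore to identify $\hat P_2(0,0)$ with $C$. By Lemma~\ref{lem:expllaplacetransform} applied to the extensive net $\hat P\colon \Sigma_{2,2}\to\RP^4$, we have
\begin{align}
	\hat P_2(0,0) = \bigcap_{k=0}^{2} \pav{\hat P}(k).
\end{align}
So I need to show that each parameter line $\pav{\hat P}(k) = \hat P(k,0)\vee \hat P(k,1)\vee\hat P(k,2)$ contains $C$. Now $\pi(\pav{\hat P}(k))$ is contained in $\pav{P}(k)$, which is a point because each column $\{P(k,0),P(k,1),P(k,2)\}$ lies in the $1$-dimensional parameter space $\pav{P}(k)$ — wait, that is only true if $P$ itself has $1$-dimensional parameter spaces. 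Here is where I must instead use Lemma~\ref{lem:Goursatterminatemsteps} more carefully: it tells us $P$, as a $\Sigma_{2,2}$-extensive net (after restricting appropriately), would have $2$-dimensional parameter spaces. So I should instead argue at the level of the net $P$ restricted to $\Sigma_{1,1}$: the point is that $P_1$ being Goursat degenerate means $P_1(i,j)$ is independent of $j$, so $P_1(i,0)=P_1(i,1)$ for the relevant $i$, and by definition of $\mathcal{L}_+$ this forces the two lines $P(i,0)\vee P(i,1)$ and $P(i,1)\vee P(i,2)$ to meet in a point independent of which pair we take — i.e. the column $\pav{P}(i)$ collapses to a line through that common point. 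Lifting, $\pav{\hat P}(i)$ is a $2$-plane (by extensivity of $\hat P$, which adds one dimension per collapsed direction), and the three $2$-planes $\pav{\hat P}(0),\pav{\hat P}(1),\pav{\hat P}(2)$ all project onto lines through the respective Goursat points; I then show they share the line $C \vee \hat P_1(\cdot)$, forcing their triple intersection to be exactly $C$.

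Once $\hat P_2(0,0)=C$ is established, together with the analogous statement $\hat P_2(0,j)=C$ for every $j$ (same argument applied to each horizontal strip — this is where I use that $P_1$ is Goursat degenerate \emph{everywhere}, i.e. $P_1(i,j)$ is independent of $j$ for all $i$), we conclude $\hat P_2(0,j)=C$ is independent of $j$, so $\hat P_2$ is Laplace degenerate by Definition~\ref{defn:laplace}. I would also invoke the preceding lemma (that $\hat P_1$ is non-degenerate) to guarantee $\hat P_2$ is well-defined in the first place. The main obstacle will be bookkeeping the dimensions correctly: I must be careful that lifting a Goursat-degenerate $P$ raises the relevant parameter spaces from $1$-dimensional to $2$-dimensional (not just keeping them collapsed), and that the three $2$-planes $\pav{\hat P}(k)$ intersect in exactly a point rather than a line — this is precisely the content of the extensivity of $\hat P$ combined with $\hat P$ being a lift, and it is what makes $\hat P_2$ genuinely Laplace degenerate (a point) rather than one-dimensional.
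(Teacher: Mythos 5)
Your overall strategy is the same as the paper's: lift, use Lemma~\ref{lem:expllaplacetransform} to write $\hat P_2$ as a triple intersection of the lifted parameter planes $\pav{\hat P}(k)$, and argue that these planes are concurrent at a point of the projection center. However, there are two concrete gaps. First, you identify $\hat P_2(0,0)$ with the center $C$ itself. For an extensive lift, $C$ is a projective subspace of dimension $(a+b)-n-1$, where $n$ is the dimension joined by $P$; it is a point only in the special case $n=a+b-1$ (which is essentially the non-extensive picture of Figure~\ref{fig:laplacelift} that the remark after Lemma~\ref{lem:liftnotgoursat} flags as atypical). In general each $2$-plane $\pav{\hat P}(i)$ meets $C$ in a single point $Q(i)$ (because it projects onto the line $\pav{P}(i)$), and the substance of the proof is showing that the $Q(i)$ all coincide. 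Your phrase ``they share the line $C\vee\hat P_1(\cdot)$'' presupposes that $C$ is a point; what actually does the job is that consecutive planes share the line $\hat P_1(i,0)\vee\hat P_1(i,1)$, which projects to the single Goursat point and hence meets $C$ --- or, as in the paper, a dimension count showing that $\pav{\hat P}(i)\vee\pav{\hat P}(i+1)$ is $3$-dimensional upstairs but projects to a $2$-dimensional space, so it meets $C$ in only one point.

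Second, Laplace degeneracy of the \emph{forward} transform $\hat P_2$ means independence of $i$ for all $j$ (Definition~\ref{defn:laplace}), not independence of $j$; your conclusion ``$\hat P_2(0,j)$ is independent of $j$, so $\hat P_2$ is Laplace degenerate'' invokes the wrong clause. Moreover, on a $\Sigma_{2,2}$ patch $\hat P_2$ consists of the single point $\hat P_2(0,0)$, so there is nothing to compare. You need (at least) a $\Sigma_{3,2}$ patch and the equality
\begin{align*}
\hat P_2(0,0)=\pav{\hat P}(0)\cap\pav{\hat P}(1)\cap\pav{\hat P}(2) \;=\; \pav{\hat P}(1)\cap\pav{\hat P}(2)\cap\pav{\hat P}(3)=\hat P_2(1,0),
\end{align*}
which follows once all four points $Q(0),\dots,Q(3)$ are shown to coincide. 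Both gaps close with the same fix --- the common-intersection-point argument run across four consecutive columns --- after which your reduction via Lemma~\ref{lem:expllaplacetransform} is exactly the paper's proof.
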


\proof{
	It suffices to prove the claim for a $\Sigma_{3,2}$ patch, thus we assume that $P$ is defined on  $\Sigma_{3,2}$. Since $P$ is non-degenerate and $P_1$ is Goursat degenerate, the four parameter spaces $\pav P(i)$ are 1-dimensional. On the other hand, the lift $\hat P$ is extensive and thus all four $\pav {\hat P}(i)$ are 2-dimensional. Recall that we obtain $P$ from $\hat P$ by a projection with center $C$.
	 Consequently, each plane $\pav {\hat P}(i)$ needs to intersect the center $C$ of the projection in a point $Q(i)$. Moreover, the three joins $\pav P(i) \vee \pav P(i+1)$ are 2-dimensional, but the lifts $\pav {\hat P}(i) \vee \pav {\hat P}(i+1)$ are 3-dimensional. Therefore, $\pav {\hat P}(i) \vee \pav {\hat P}(i+1)$ also intersects the center $C$ in a point. Hence, all four planes $\pav {\hat P}(i)$ intersect $C$ in the same point.
	Since $\hat P$ is extensive and $\hat P_1$ is non-degenerate note[AF]{by previous lemma}, $\hat P_2$ is determined by Lemma~\ref{lem:expllaplacetransform}, that is
	\begin{align}
		\hat P_2(0,0) &= \pav {\hat P}(0) \cap \pav {\hat P}(1) \cap \pav {\hat P}(2), & \hat P_2(1,0) &= \pav {\hat P}(1) \cap \pav {\hat P}(2) \cap \pav {\hat P}(3).
	\end{align}
	By the arguments above $\hat P_2(0,0)$ equals $\hat P_2(1,0)$, therefore $\hat P_2$ is Laplace degenerate.\qed

}

\begin{lemma}\label{lem:liftnotgoursat}
	Let $P\colon \Sigma_{a,b} \to \RP^n$ be a non-degenerate Q-net such that $P_1$ is Goursat degenerate, and let $\hat P$ be a lift of $P$. Then $\hat P_2(i,j) \neq \hat P_2(i,j+1)$ for all $i,j$.
\end{lemma}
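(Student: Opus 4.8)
The plan is to argue by contradiction. Since the assertion is local, it suffices to prove it for a $\Sigma_{2,3}$ subpatch of $\hat{P}$, and (after a harmless relabeling of indices) to show that $\hat{P}_2(0,0) \ne \hat{P}_2(0,1)$. So suppose instead that $\hat{P}_2(0,0) = \hat{P}_2(0,1) =: X$. Recall that restrictions of extensive Q-nets are extensive, so each column $\hat{P}(k,0), \hat{P}(k,1), \hat{P}(k,2), \hat{P}(k,3)$ with $k \in \{0,1,2\}$ spans a three-dimensional projective subspace, each face of $\hat{P}$ is a plane, and (by the previous lemma) $\hat{P}_1$ is a non-degenerate Q-net; in particular $\hat{P}_2 = \mathcal{L}_+\hat{P}_1$ is well-defined, so the statement makes sense.

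First I would locate $X$ inside the partial columns of $\hat{P}$. Applying Lemma~\ref{lem:expllaplacetransform} to the $\Sigma_{2,2}$ subpatch on rows $0,1,2$ yields $X = \hat{P}_2(0,0) = \bigcap_{k=0}^{2}\bigl(\hat{P}(k,0) \vee \hat{P}(k,1) \vee \hat{P}(k,2)\bigr)$, and applying it to the $\Sigma_{2,2}$ subpatch on rows $1,2,3$ yields $X = \hat{P}_2(0,1) = \bigcap_{k=0}^{2}\bigl(\hat{P}(k,1) \vee \hat{P}(k,2) \vee \hat{P}(k,3)\bigr)$. Hence, for each $k \in \{0,1,2\}$, the point $X$ lies on both planes $\hat{P}(k,0) \vee \hat{P}(k,1) \vee \hat{P}(k,2)$ and $\hat{P}(k,1) \vee \hat{P}(k,2) \vee \hat{P}(k,3)$. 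Since $\hat{P}(k,0),\hat{P}(k,1),\hat{P}(k,2),\hat{P}(k,3)$ span a three-space, these two planes intersect in exactly the line $\hat{P}(k,1) \vee \hat{P}(k,2)$, so $X \in \hat{P}(k,1) \vee \hat{P}(k,2)$ for all $k \in \{0,1,2\}$.

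Now I would read off the contradiction directly from the definition of the Laplace transform. By definition $(\hat{P}(0,1) \vee \hat{P}(0,2)) \cap (\hat{P}(1,1) \vee \hat{P}(1,2)) = \hat{P}_1(0,1)$, and this is a single point because both lines lie in the two-dimensional face of $\hat{P}$ on rows $1,2$ between columns $0,1$ and are distinct by non-degeneracy of $\hat{P}$; likewise $(\hat{P}(1,1) \vee \hat{P}(1,2)) \cap (\hat{P}(2,1) \vee \hat{P}(2,2)) = \hat{P}_1(1,1)$. From the previous step $X$ lies on each of $\hat{P}(0,1) \vee \hat{P}(0,2)$, $\hat{P}(1,1) \vee \hat{P}(1,2)$ and $\hat{P}(2,1) \vee \hat{P}(2,2)$, hence $X = \hat{P}_1(0,1)$ and $X = \hat{P}_1(1,1)$, so $\hat{P}_1(0,1) = \hat{P}_1(1,1)$. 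But $(0,1)$ and $(1,1)$ are the endpoints of an edge of $\hat{P}_1$, so this contradicts the non-degeneracy of $\hat{P}_1$, completing the proof.

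The proof is short and every individual step is routine once the right objects are written down; the things to watch are the bookkeeping of patch sizes (a $\Sigma_{2,3}$ patch is exactly the size needed for $\hat{P}_2$, $\hat{P}_1$, and four-point columns all to be defined) and the fact that non-degeneracy of $\hat{P}_1$ — the crux of the contradiction — is an input from the previous lemma, not something automatic. The only real idea is to use the two expressions for $X$, one for each shift of the $\Sigma_{2,2}$-window in the $j$-direction, to trap $X$ on the ``vertical'' edges $\hat{P}(k,1)\vee\hat{P}(k,2)$ of $\hat{P}$, where it is then simultaneously pinned to $\hat{P}_1(0,1)$ and $\hat{P}_1(1,1)$.
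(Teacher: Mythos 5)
Your proof is correct. It differs in mechanics from the paper's own argument, which is a one-line dimension count: the paper observes that if $\hat P_2(0,0)=\hat P_2(0,1)$ then the $\Sigma_{2,3}$ patch of $\hat P$ would join at most a $4$-dimensional space rather than a $5$-dimensional one, contradicting extensiveness of the lift, and it never invokes the explicit intersection formula nor the non-degeneracy of $\hat P_1$. You instead apply Lemma~\ref{lem:expllaplacetransform} to the two $j$-shifted $\Sigma_{2,2}$ windows, trap the hypothetical common point $X$ on the three lines $\hat P(k,1)\vee\hat P(k,2)$, and conclude $X=\hat P_1(0,1)=\hat P_1(1,1)$, contradicting the edge condition in Definition~\ref{def:nondegqnet} for $\hat P_1$, which is supplied by the preceding (unlabelled) lemma on lifts of Goursat degenerate nets. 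Both arguments ultimately rest on extensiveness of $\hat P$ (your use of it enters through the $3$-dimensional column spans and, indirectly, through the non-degeneracy of $\hat P_1$), so neither actually needs the Goursat hypothesis beyond guaranteeing that $\hat P_2$ is well-defined and $\hat P_1$ is non-degenerate. The paper's route is shorter but leaves the ``not hard to see'' dimension bound to the reader; yours is longer but fully explicit, and it has the mild added value of identifying exactly where the degeneration would have to sit (a coincidence $\hat P_1(i,j)=\hat P_1(i+1,j)$, i.e.\ a Laplace-type coincidence in $\hat P_1$), at the cost of leaning on the preceding lemma as an extra input. The only points worth policing are bookkeeping ones you already handled: restrictions of extensive nets are extensive, the Laplace transform commutes with restriction, and Lemma~\ref{lem:expllaplacetransform} is applied inside the $4$-dimensional join of each $\Sigma_{2,2}$ window rather than literally in $\RP^{2m}$ -- the same licence the paper itself takes in the proof of Lemma~\ref{lem:goursatliftstolaplace}.
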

\proof{
	It suffices to prove the claim for a $\Sigma_{2,3}$ patch. Moreover, it is not hard to see that if $\hat P_2(0,0) = \hat P_2(0,1)$ then $\hat P$ would join at most a 4-dimensional space and not a 5-dimensional space. This would be a contradiction with $\hat P$ being extensive. \qed
    }

\begin{remark}
	Note that in Figure~\ref{fig:laplacelift} we lift a Q-net $P$ defined on $\Sigma_{3,2}$ with Goursat degenerate $P_1$ from $\RP^2$ to $\RP^3$ instead of lifting it to $\RP^5$, since it is difficult to visualize more than three dimensions. If we would extend the Q-net $P$ to be defined on a larger $\Sigma_{3,3}$ patch, then one would observe that
	\begin{align}
		\hat P_2(0,0) &= \hat P_2(0,1), & \hat P_2(1,0) = \hat P_2(1,1),
	\end{align}
	in apparent contradiction to Lemma~\ref{lem:liftnotgoursat}. This is a general phenomenon if we consider a non-extensive lift to ``one dimension more'' of a Q-net $P$ with Laplace degenerate $\hat P_1$: $\hat P_2$ is degenerate of mixed type. However, this is not really a contradiction with Lemma~\ref{lem:liftnotgoursat} since in this lemma we assume that the lift is extensive.

\end{remark}

Let us make a practical algebraic observation on Goursat degenerate Q-nets. Of course, if a Q-net $P_m$ is Goursat or Laplace degenerate, then $P_{m+1}$ is not defined. Moreover, if $P_m$ is Laplace degenerate for some $m>0$, then $H_m$ is $1$ as stated in Lemma~\ref{lem:Laplaceterminatemstepsalgebraic}. Thus the recurrence formula for $H$ as given in Theorem~\ref{th:laplacerecurrence} is singular and $H_{m+1}$ is not defined, as expected. However, this is not the case for Goursat degenerate Q-nets. Instead, we arrive at the following lemma, which follows from Lemma~\ref{lem:goursatliftstolaplace}.

\begin{lemma}\label{lemma:HorKequals1Goursatdegenerate}
	Let $P\colon \Sigma \to \RP^n$ be a non-degenerate Q-net.
	\begin{enumerate}
		\item If $P_{1}$ is Goursat degenerate, then the right-hand side of \eqref{eq:H1formula} equals $1$ for all $(i,j)$. 
		\item If $P_{-1}$ is Goursat degenerate, then the right-hand side of \eqref{eq:K1formula} equals $1$ for all $(i,j)$.
	\end{enumerate}
\end{lemma}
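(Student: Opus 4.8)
The plan is to reduce the statement about the recurrence formula to the geometric statement proved in Lemma~\ref{lem:goursatliftstolaplace}. Consider the first claim, about \eqref{eq:H1formula}; the second follows by the symmetric argument (exchanging the roles of $H$ and $K$, or equivalently applying the first claim to the transpose net $(i,j)\mapsto P(j,i)$). Suppose $P_1$ is Goursat degenerate. First I would pass to a lift $\hat P$ of $P$, using Lemma~\ref{lem:lift}; by the remarks following Lemma~\ref{lem:lift}, the Laplace invariants of $\hat P$ coincide with those of $P$, and $\hat P_1$ is a lift of $P_1$, so it suffices to prove the statement for $\hat P$. The advantage of working with the lift is that $\hat P$ is extensive and, by Lemma~\ref{lem:goursatliftstolaplace}, $\hat P_2$ is Laplace degenerate.

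Now apply the algebraic characterization of Laplace degeneracy, Lemma~\ref{lem:Laplaceterminatemstepsalgebraic}: since $\hat P_2 = (\hat P_1)_1$ is Laplace degenerate, the Laplace invariant of $\hat P_1$ satisfies $H_1(i,j) = 1$ for all $(i,j)$ --- here $H_1$ denotes the $H$-invariant of $\hat P_1$, which is well-defined precisely because $\hat P_1$ is non-degenerate (Lemma preceding Lemma~\ref{lem:goursatliftstolaplace}). On the other hand, the recurrence of Corollary~\ref{cor:laplacerecurrence}, formula \eqref{eq:H1formula}, expresses $H_1(i,j)$ exactly as the right-hand side of \eqref{eq:H1formula} evaluated at the invariants $H$ and $K$ of $\hat P$, which are the same as those of $P$. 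Hence the right-hand side of \eqref{eq:H1formula} equals $1$ for all $(i,j)$, as claimed.

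The one point that needs a little care --- and which I expect to be the main (minor) obstacle --- is making sure the recurrence \eqref{eq:H1formula} is actually applicable, i.e.\ that all the quantities appearing in it are well-defined and that no spurious $0/0$ cancellation hides the conclusion. Concretely: the invariants $H(i+1,j)$, $H(i,j+1)$, $H(i,j)$, $H(i+1,j+1)$ and $K(i,j+1)$ of $P$ (equivalently of $\hat P$) are all defined because $P$ is a non-degenerate Q-net and $P_1$ is Goursat --- not Laplace --- degenerate, so $H$ is not identically $1$ and, since $P_1$ is nowhere Laplace degenerate, none of the relevant factors $1 - H^{-1}(\cdot)$ vanish on the patch where $P_1$ exists. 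Thus the formula genuinely evaluates to the number $H_1(i,j)$ of the non-degenerate net $\hat P_1$, and that number is $1$. I would phrase the argument for a fixed $(i,j)$ and a $\Sigma_{3,2}$ (resp.\ $\Sigma_{2,3}$) subpatch, exactly as in the proof of Lemma~\ref{lem:goursatliftstolaplace}, so that all the invocations above are legitimate. This gives claim~(1); claim~(2) is obtained verbatim with $H$ and $K$ interchanged and \eqref{eq:K1formula} in place of \eqref{eq:H1formula}, using the Goursat degeneracy of $P_{-1}$ and Lemma~\ref{lem:goursatliftstolaplace} applied in the backwards direction.
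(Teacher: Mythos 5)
Your proposal is correct and follows essentially the same route as the paper's own proof: lift $P$, invoke Lemma~\ref{lem:goursatliftstolaplace} to get that $\hat P_2$ is Laplace degenerate, apply Lemma~\ref{lem:Laplaceterminatemstepsalgebraic} to conclude $\hat H_1 \equiv 1$, and use that the Laplace invariants of $P$ and $\hat P$ coincide. Your added care about the well-definedness of the factors in \eqref{eq:H1formula} is a reasonable refinement that the paper leaves implicit.
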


\begin{proof}
	It suffices to prove the theorem under the assumption that $\Sigma$ is finite. Due to Lemma~\ref{lem:goursatliftstolaplace}, $P$ has a lift $\hat P$ such that $\hat P_2$ is Laplace degenerate. As a consequence of Lemma~\ref{lem:Laplaceterminatemstepsalgebraic} the Laplace invariants $\hat{H}_{1}(i,j)$ of the lift are equal to 1. Since the Laplace invariants of $P$ are the same as those of $\hat P$, we have shown that the right-hand side of equation \eqref{eq:H1formula} equals $1$. The claim for $P_{-1}$ follows analogously.
\end{proof}

Lemma~\ref{lemma:HorKequals1Goursatdegenerate} shows that -- from the perspective of Laplace invariants -- the condition that $P_{\pm 1}$ is Goursat degenerate is not distinguishable from the condition that $P_{\pm 2}$ is Laplace degenerate. 

Recall that given a BS-K{\oe}nigs net $P$, the diagonal intersection net $D$ is also a Q-net (and a D-K{\oe}nigs net) which therefore comes with its own sequence of Laplace invariants. As we show in the following theorem, it turns out that the sequence of Laplace invariants of $D$ coincides with that of $P$ -- but with reversed order. We use superscripts to indicate whether the Laplace invariants $H$ and $K$ are determined from $P$ or $D$.

\begin{theorem}\label{thm:symmetryLaplaceinvariants}
	Let $P$ be a BS-K{\oe}nigs net and let $D$ be the diagonal intersection net of $P$. For each $m \in \Z$ the Laplace invariants satisfy
	\begin{align*}
		H^{P}_{m}(i+1,j) = K^{D}_{-m}(i,j),\qquad H^D_{m}(i,j) = K^{P}_{-m}(i,j+1),
	\end{align*}
    assuming that $P_{\pm m}$ and $D_{\mp m}$ exist.
\end{theorem}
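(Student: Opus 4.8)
The plan is to reduce the statement to a single identity at the level of geometry rather than grinding through the recurrence \eqref{eq:laplacerecurrence}. By Lemma~\ref{lem:hk} it suffices to establish a relation between one of the Laplace invariants of $D_{-m}$ and one of $P_m$, since the $K$'s are determined by the $H$'s up to an index shift; concretely, the claim $H^P_m(i+1,j) = K^D_{-m}(i,j)$ is equivalent (via $K^D_{-m}(i,j) = H^D_{-m-1}(i,j+1)$ and similar manipulations) to a statement purely about the forward-and-backward transforms of $P$ and $D$. First I would reduce to the case $m=0$, i.e.\ prove $H^P(i+1,j) = K^D_0(i,j)$ and $H^D(i,j) = K^P_0(i,j+1)$ for the base case, and then propagate along the Laplace sequence: here the key input is that $D_m$ is \emph{not} the diagonal intersection net of $P_m$, but the two are nonetheless related by a fixed index shift and share Laplace invariants through the recursion of Corollary~\ref{cor:laplacerecurrence} — so if the relation holds at one value of $m$ it holds at all of them, because both sequences $(H^P_\bullet)$ and $(H^D_\bullet)$ satisfy the same rational recurrence \eqref{eq:laplacerecurrence}, which is uniquely solvable forwards and backwards from two consecutive terms.

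The geometric heart of the base case is the observation, from Lemma~\ref{lem:koenigslift}, that a lift $\hat P$ of a BS-Kœnigs net lies alternately on two hyperplanes $U_1,U_2$, and that the diagonal intersection net $\hat D$ lies in and joins $U_1 \cap U_2$; moreover $\hat D$ is a lift of $D$ and has the same Laplace invariants. So I would work with the extensive lift, where all the joins and intersections in Definition~\ref{def:discreteLaplaceinvariants} are transverse and of the expected dimension, and compute the cross-ratio $K^D(i,j) = \cro(D(i,j), D_{-1}(i,j), D(i+1,j), D_{-1}(i,j-1))$ by projecting $U_1 \cap U_2$ — or better, by exhibiting a projectivity from the line through $P(i+1,j), P_1(i+1,j), P(i+1,j+1), P_1(i,j)$ (the line carrying $H^P(i+1,j)$) to the line carrying $K^D(i,j)$ that matches the four relevant points in order. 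The incidences needed are exactly the quad-planarity and diagonal-point incidences of $P$ and $D$; I expect this matching of points to be somewhat delicate bookkeeping but ultimately a finite check using \cite[Theorem 2.27]{BS2008DDGbook} as in the proof of Lemma~\ref{lem:koenigslift}.

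An alternative, possibly cleaner, route is purely algebraic: show directly that $K^D_{-m}(i,j)$, as a function of $m$, satisfies the same recurrence \eqref{eq:K1formula} as $K^P_{m}$ does when $m$ is replaced by $-m$ (i.e.\ reversing the direction of the Laplace sequence negates the role of forwards/backwards and hence reflects the index), and then pin down the two sequences at a single index — say $m=0$ and $m=1$ — using the base-case geometric identity above plus Lemma~\ref{lem:hk}. Reversal of the Laplace sequence is built into the recurrence because \eqref{eq:laplacerecurrence} is palindromic in an appropriate sense (it is the same relation read in either time direction after the substitution $H \mapsto H^{-1}$, which corresponds to swapping the two parameter directions); combined with the BS-Kœnigs constraint \eqref{eq:BScrossratioequ}, which by Definition~\ref{defn:DoliwaKoenigs} and Lemma~\ref{lem:BSKoenigsandDiagonalIntersectionNetisDKoenigs} is exactly what turns the $P$-sequence into the $D$-sequence run backwards, this should force the claimed identities for all $m$. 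The main obstacle in either approach is establishing the base case correctly — getting the index shifts in Lemma~\ref{lem:hk} and in the definition of $D_m$ to line up so that $H^P_m(i+1,j)$ (and not, say, $H^P_m(i,j)$ or $H^P_{m}(i,j+1)$) is the quantity that equals $K^D_{-m}(i,j)$ — since an off-by-one here invalidates the whole propagation; I would verify it very carefully on a small explicit patch $\Sigma_{2,2}$ lifted to $\RP^4$ before trusting the inductive step.
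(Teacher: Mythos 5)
Your overall architecture coincides with the paper's: prove the identity at $m=0$, then propagate along the Laplace sequence using the fact that the forwards and backwards recurrences of Corollary~\ref{cor:laplacerecurrence} have the same form (so that interchanging $H$ and $K$ between $P$ and $D$ at one index forces it at all indices). That propagation step is sound and is exactly what the paper does. The genuine gap is in the base case, which is where all the content of the theorem lives and which you explicitly defer to ``delicate bookkeeping'' and ``a finite check''. Your proposed mechanism --- exhibiting a projectivity from the line carrying $H^P(i+1,j)$ to the line carrying $K^D(i,j)$ matching the four points in order --- is not a proof until that projectivity is produced from the incidence structure: the mere \emph{existence} of a projectivity of $\RP^1$ matching four ordered points on one line to four ordered points on another is logically equivalent to the equality of the two cross-ratios, so asserting it without constructing it (say, as a composition of concrete perspectivities coming from the quad planes and the relations $D_{-1}(i,j)\in\pah{P}(j+1)$) is circular. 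The paper instead settles the base case by a direct computation: it normalizes six points of $P$ with representative vectors $u,v,w,x$ and scalars $\alpha,\beta$, derives $D$, $P_1$ and $D_{-1}$ explicitly, and finds both cross-ratios equal to $\beta/\alpha$. Some such explicit verification is unavoidable, and your proposal does not contain it.

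A smaller but telling point: your own index manipulation already slips. From Lemma~\ref{lem:hk} one has $K_Q(i,j)=H_{Q_{-1}}(i,j-1)$, so $K^D_{-m}(i,j)=H^D_{-m-1}(i,j-1)$, not $H^D_{-m-1}(i,j+1)$ as written. Since you correctly identify the index bookkeeping as the place where an off-by-one invalidates the whole propagation, this reinforces that the base case must actually be computed rather than sketched. Likewise, the remark that the recurrence is ``palindromic after $H\mapsto H^{-1}$'' is unnecessary and not quite what is used; the relevant symmetry is simply that \eqref{eq:H1formula} and \eqref{eq:K1formula} are the same relation under $H\leftrightarrow K$ and reversal of the direction of the sequence.
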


\begin{figure}[tb] 
	\centering
	\begin{tikzpicture}
		\small
		\coordinate[wvert, label={left:$P(1,0)$}] (p00) at (0,0) {};
		\coordinate[wvert, label={left:$P(1,1)$}] (p01) at (0,2) {};
		\coordinate[wvert, label={left:$P(1,2)$}] (p02) at (0,4) {};
		\coordinate[wvert, label={right:$P(2,0)$}] (p10) at (2,0) {};
		\coordinate[wvert, label={right:$P(2,1)$}] (p11) at (2,2) {};
		\coordinate[wvert, label={right:$P(2,2)$}] (p12) at (2,4) {};		
		\draw[-]
			(p00) -- (p01) -- (p02) -- (p12) -- (p11) -- (p10) -- (p00)
			(p01) edge[line width=2pt] (p11)
		;
		\coordinate[blvert, label={left:$D(0,0)$}] (d00) at (-1,1) {};
		\coordinate[blvert, label={below:$D(1,0)$}] (d10) at (1,1) {};
		\coordinate[blvert, label={right:$D(2,0)$}] (d20) at (3,1) {};
		\coordinate[blvert, label={left:$D(0,1)$}] (d01) at (-1,3) {};
		\coordinate[blvert, label={above:$D(1,1)$}] (d11) at (1,3) {};
		\coordinate[blvert, label={right:$D(2,1)$}] (d21) at (3,3) {};		
		\draw[-,blue]
			(d00) -- (d01) -- (d11) -- (d21) -- (d20) -- (d10) -- (d00)
			(d10) edge[line width=2pt] (d11)
		;		
	\end{tikzpicture}
	\hspace{5mm}
	\begin{tikzpicture}
		\small
		\coordinate[blvert, label={left:$D_1(0,-1)$}] (p00) at (0,0) {};
		\coordinate[blvert, label={left:$D_1(0,0)$}] (p01) at (0,2) {};
		\coordinate[blvert, label={left:$D_1(0,1)$}] (p02) at (0,4) {};
		\coordinate[blvert, label={right:$D_1(1,-1)$}] (p10) at (2,0) {};
		\coordinate[blvert, label={right:$D_1(1,0)$}] (p11) at (2,2) {};
		\coordinate[blvert, label={right:$D_1(1,1)$}] (p12) at (2,4) {};		
		\draw[-,blue]
			(p00) -- (p01) -- (p02) -- (p12) -- (p11) -- (p10) -- (p00)
			(p01) edge[line width=2pt] (p11)
		;
		\coordinate[wvert, label={left:$P_{-1}(0,0)$}] (d00) at (-1,1) {};
		\coordinate[wvert, label={below:$P_{-1}(1,0)$}] (d10) at (1,1) {};
		\coordinate[wvert, label={right:$P_{-1}(2,0)$}] (d20) at (3,1) {};
		\coordinate[wvert, label={left:$P_{-1}(0,1)$}] (d01) at (-1,3) {};
		\coordinate[wvert, label={above:$P_{-1}(1,1)$}] (d11) at (1,3) {};
		\coordinate[wvert, label={right:$P_{-1}(2,1)$}] (d21) at (3,3) {};		
		\draw[-,black]
			(d00) -- (d01) -- (d11) -- (d21) -- (d20) -- (d10) -- (d00)
			(d10) edge[line width=2pt] (d11)
		;		
	\end{tikzpicture}		
	\caption{The two Laplace invariants associated to the emphasized edges coincide on the left, and on the right as well. Specifically, on the left we have $K^P(1,1) = H^D(1,0)$ and on the right $H^P_{-1}(1,0) = K^D_{1}(0,0)$.}
\label{fig:symmetryLaplaceInvariants2}
\end{figure}
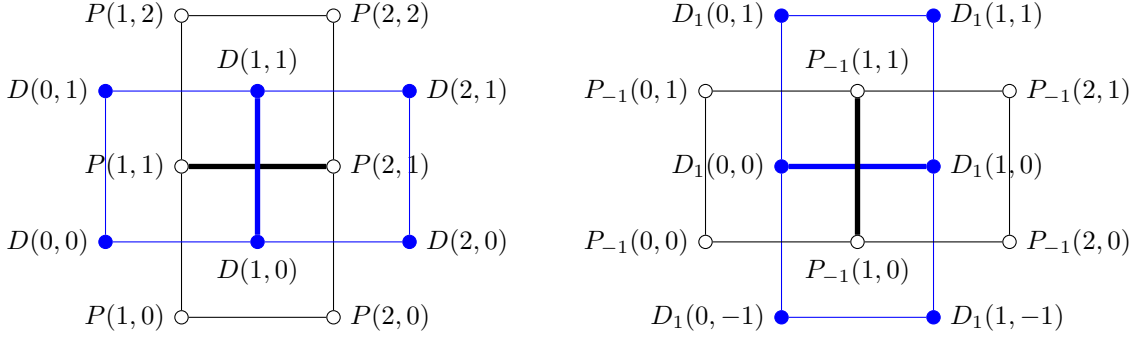

\begin{proof}
	Let us begin by proving the claim for $m=0$. We choose representative vectors $u,v,w,x \in \R^{n+1}\setminus \{0\}$ and scalars $\alpha, \beta \in \R$ such that 
	\begin{align*}
		P(i,j)&=[u], & P(i+1,j) &= [v], & P(i+1,j+1) &= [w], \\ 
		P(i,j+1) &= [u+v+w], & P(i+2,j) &= [x], & P(i+2,j+1) &= [\a v +\b w +x].
	\end{align*}
	Consequently, straight-forward computations show that 
	\begin{align*}
		D(i,j) &= [u+w], & P_1(i,j) &= [v+w], & D(i+1,j) &= [\b w+x], \\ 
		P_1(i+1,j) &= [\a v + \b w], & D_{-1}(i,j-1) &= [x-\b u], & D_{-1}(i,j) &= [-\a u +(\b -\a)w +x],
	\end{align*}
	where we used that \cite[Fig. 4]{BS2009Koenigsnets}
	\begin{align}
		D_{-1}(i,j-1) &= (D(i,j)\vee D(i+1,j)) \cap (P(i,j)\vee P(i+2,j)), \\
		D_{-1}(i,j) &= (D(i,j)\vee D(i+1,j)) \cap (P(i,j+1)\vee P(i+2,j+1)). \label{eq:LaplaceDinpahP}
	\end{align}	
	Inserting these expressions into the cross-ratios that define the Laplace invariants we obtain 
	\begin{align*}
		H^{P}(i+1,j) &= \mathrm{cr}([v], [\a v+\b w], [w], [v+w]) = \frac{\b}{\a},\\
		K^D(i,j)&= \mathrm{cr}([u+w], [-\a u + (\b-\a)w +x], [\b w+x], [x-\b u]) = \frac{\b}{\a}.
	\end{align*}	
	Clearly, we have shown $H^{P}(i+1,j) = K^D(i,j)$. In analogous fashion, one may compute that $H^D(i,j) = K^{P}(i,j+1)$. Hence, we have proven the claim for $m = 0$.
	
	To prove the claim for $m \neq 0$, recall that the Laplace invariants in the Laplace sequence satisfy a recurrence, see Corollary~\ref{cor:laplacerecurrence}. In particular, the Laplace invariants $H^P, K^P$ of $P$ determine the Laplace invariants of all $P_m$, and analogously for $D$. Moreover, one sees from Corollary~\ref{cor:laplacerecurrence} that the forwards and backwards recurrence for the Laplace invariants are the same. Since for $m=0$ the role of $H$ and $K$ in $P$ and $D$ is interchanged, we obtain the full claim of the theorem for all $m$.
\end{proof}

\begin{remark}
	Note that the index shifts in Theorem~\ref{thm:symmetryLaplaceinvariants} are an artifact of the $\Z^2$ coordinate notation we use. As visible in Figure~\ref{fig:symmetryLaplaceInvariants2}, the Laplace invariants that agree always belong to two edges that are combinatorial duals of each other. Via index shifts one may get at most half of the invariants to agree, for example if we define $E\colon \Z^2 \to \RP^n$ by $E(i,j)=D(i-1,j)$, then we get the formula $H^P_m(i,j) = K^E_{-m}(i,j)$. If we define $F\colon \Z^2 \to \RP^n$ by $F(i,j)=D(i,j-1)$, then we get the formula $H^F_m(i,j) = K^P_{-m}(i,j)$.
\end{remark}

\begin{remark}
    In differential geometry, the smooth Laplace invariants $h_i, k_i$ of the Laplace sequence of K{\oe}nigs nets satisfy $h_i = k_{-i}$ for all $i \in \Z$, see \cite{Tzitzeica1924geometrie}. Thus, the symmetry in Theorem~\ref{thm:symmetryLaplaceinvariants} is in agreement with the smooth theory.
\end{remark}

\begin{remark}
	Theorem~\ref{thm:symmetryLaplaceinvariants} was independently discovered in a slightly more general setup (of the so called \emph{K{\oe}nigs binets}), as will be shown in an upcoming publication \cite{adtbinets}.
\end{remark}

The symmetry in the Laplace invariants of $P$ and $D$ has the following consequences for Laplace degenerations.

\begin{proposition} \label{prop:bsdlaplace}
	Let $P$ be a BS-K{\oe}nigs net and let $D$ be its diagonal intersection net. Then $P_{m}$ is Laplace degenerate if and only if $D_{-m}$ is Laplace degenerate (assuming $P_m$ and $D_{-m}$ exist).
\end{proposition}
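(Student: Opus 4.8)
The plan is to deduce Proposition~\ref{prop:bsdlaplace} directly from the combination of Theorem~\ref{thm:symmetryLaplaceinvariants} and Lemma~\ref{lem:Laplaceterminatemstepsalgebraic}. The point is that ``Laplace degenerate'' has a purely algebraic characterization in terms of a single Laplace invariant being identically $1$, and Theorem~\ref{thm:symmetryLaplaceinvariants} tells us precisely that the Laplace invariants of $P$ and of $D$ agree up to reversal of the order in the Laplace sequence (and a combinatorial index shift). So the whole statement should reduce to matching up which invariant must equal $1$ on which side.

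First I would reduce to the case $m=1$: if $P_m$ is Laplace degenerate, then applying the (assumed to exist) iterated Laplace transform $\mathcal L_+^{\,m-1}$, the net $Q := P_{m-1}$ satisfies that $Q_1$ is Laplace degenerate, and similarly on the $D$-side one works with $D_{-(m-1)}$. (Here one uses that an iterated Laplace transform of a BS-Kœnigs net is again a Q-net whose diagonal intersection net is the corresponding iterated Laplace transform of $D$ — this follows from Lemma~\ref{lem:BSKoenigsandDiagonalIntersectionNetisDKoenigs} applied along the sequence, together with Theorem~\ref{thm:symmetryLaplaceinvariants} which already presupposes the existence of all these maps.) Actually it is cleaner not to reduce to $m=1$ at all: by Lemma~\ref{lem:Laplaceterminatemstepsalgebraic} applied to the Q-net $P_{m-1}$, we have that $P_m$ is Laplace degenerate if and only if $H^P_{m-1}(i,j)=1$ for all $i,j$. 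Likewise, $D_{-m}$ is Laplace degenerate if and only if $K^D_{-m+1}(i,j)=1$ for all $i,j$ (using the second part of Lemma~\ref{lem:Laplaceterminatemstepsalgebraic} applied to $D_{-m+1}$, since $\mathcal L_-(D_{-m+1}) = D_{-m}$).

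The key step is then Theorem~\ref{thm:symmetryLaplaceinvariants}, which gives $H^P_{m-1}(i+1,j) = K^D_{-(m-1)}(i,j)$ for all $i,j$. Reading off from this identity: the function $(i,j)\mapsto H^P_{m-1}(i,j)$ is identically $1$ if and only if the function $(i,j)\mapsto K^D_{-m+1}(i,j)$ is identically $1$, since the two are related by a mere index shift $(i,j)\mapsto(i+1,j)$ which is a bijection of the index set (ranging over all of $\Z^2$, or over the appropriate finite patch once one has checked the shift stays inside it). Combining with the two applications of Lemma~\ref{lem:Laplaceterminatemstepsalgebraic} above yields exactly ``$P_m$ Laplace degenerate $\iff$ $D_{-m}$ Laplace degenerate,'' which is the claim.

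The main obstacle I anticipate is bookkeeping rather than mathematics: making sure the index shifts and the existence hypotheses line up. Specifically, one must be careful that (a) Lemma~\ref{lem:Laplaceterminatemstepsalgebraic} is being applied to a \emph{non-degenerate} Q-net one step before the degeneration, which is legitimate precisely under the stated assumption that $P_m$ (hence $P_{m-1}$) and $D_{-m}$ (hence $D_{-m+1}$) exist; (b) Theorem~\ref{thm:symmetryLaplaceinvariants} is quoted with the correct index, namely with $m$ replaced by $m-1$, so that it relates $H^P_{m-1}$ to $K^D_{-(m-1)}$; and (c) in the finite-patch case the index shift $(i,j)\mapsto(i+1,j)$ maps the index set of $H^P_{m-1}$ onto the index set of $K^D_{-m+1}$, which it does by the combinatorial duality of edges illustrated in Figure~\ref{fig:symmetryLaplaceInvariants2}. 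Once these are checked, the proof is a two-line chain of equivalences, so I would write it as: apply Lemma~\ref{lem:Laplaceterminatemstepsalgebraic} twice, apply Theorem~\ref{thm:symmetryLaplaceinvariants} once, done.
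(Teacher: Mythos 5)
Your argument is exactly the paper's proof: apply Lemma~\ref{lem:Laplaceterminatemstepsalgebraic} to identify Laplace degeneracy of $P_m$ with $H^P_{m-1}\equiv 1$ and of $D_{-m}$ with $K^D_{-m+1}\equiv 1$, then use Theorem~\ref{thm:symmetryLaplaceinvariants} (with index $m-1$) to equate these two conditions up to a harmless shift. The extra bookkeeping you flag is handled correctly, so the proposal is correct and matches the paper's approach.
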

\proof{
	By Lemma~\ref{lem:Laplaceterminatemstepsalgebraic}, $P_m$ is Laplace degenerate if and only if $H_{m-1}^P =1$, which by Theorem~\ref{thm:symmetryLaplaceinvariants} is equivalent to  $K_{-m+1}^D =1$. By Lemma~\ref{lem:Laplaceterminatemstepsalgebraic}, this is equivalent to $D_{-m}$ being Laplace degenerate.\qed
}

\begin{proposition}\label{prop:bsdgoursat}
	Let $P$ be a BS-K{\oe}nigs net and let $D$ be its diagonal intersection net. If $P_{m}$ is Goursat degenerate, then $D_{-m-1}$ is Laplace degenerate (assuming $D_{-m-1}$ exists).
\end{proposition}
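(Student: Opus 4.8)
The plan is to reduce the Goursat case to the already-established Laplace case of Proposition~\ref{prop:bsdlaplace} by passing to a lift. First I would invoke Lemma~\ref{lem:goursatliftstolaplace}: since $P_m$ is Goursat degenerate, for a suitable lift $\hat P$ (say into $\RP^{a+b}$, after restricting to a finite patch, which suffices because the statement is local in $\Sigma$) the Laplace transform $\hat P_{m+1}$ is Laplace degenerate. Crucially, $\hat P$ is still a BS-K{\oe}nigs net because lifting does not change Laplace invariants, so the BS-K{\oe}nigs condition \eqref{eq:BScrossratioequ} is preserved; thus $\hat P$ has its own diagonal intersection net $\hat D$, which by the earlier remark is a lift of $D$, and $\hat D$ is a D-K{\oe}nigs net.

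Next I would apply Proposition~\ref{prop:bsdlaplace} to the pair $(\hat P, \hat D)$ with the index $m+1$ in place of $m$: since $\hat P_{m+1}$ is Laplace degenerate, $\hat D_{-m-1}$ is Laplace degenerate (here one needs $\hat D_{-m-1}$ to exist, which follows from the hypothesis that $D_{-m-1}$ exists together with the fact that the Laplace sequence of a lift is a lift of the Laplace sequence). Finally, because $\hat D_{-m-1}$ is a lift of $D_{-m-1}$, and Laplace degeneracy is detected by the Laplace invariant being $1$ (Lemma~\ref{lem:Laplaceterminatemstepsalgebraic}), which is unchanged under lifting, we conclude that $D_{-m-1}$ itself is Laplace degenerate. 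Alternatively, and perhaps more cleanly, one could argue purely algebraically: $P_m$ Goursat degenerate means (via Lemma~\ref{lemma:HorKequals1Goursatdegenerate} applied to $P_{m-1}$, or rather its lift) that the Laplace invariant $\hat H^P_{m+1}$ equals $1$; by Theorem~\ref{thm:symmetryLaplaceinvariants} this forces $K^D_{-m-1} = 1$; and by Lemma~\ref{lem:Laplaceterminatemstepsalgebraic} this is exactly the statement that $D_{-m-1}$ is Laplace degenerate.

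I would likely present the algebraic route as the main proof since it most closely mirrors the proof of Proposition~\ref{prop:bsdlaplace} and avoids fussing over the existence of lifts on the infinite domain. The key input chain is: Goursat degeneracy of $P_m$ $\Rightarrow$ the right-hand side of the $H$-recurrence at level $m$ equals $1$ (equivalently $\hat H_{m+1}^P = 1$ for a lift) by Lemma~\ref{lemma:HorKequals1Goursatdegenerate}; then $H^P_{m+1} = K^D_{-m-1}$ by Theorem~\ref{thm:symmetryLaplaceinvariants}; then $K^D_{-m-1} = 1$ $\Leftrightarrow$ $D_{-m-1}$ Laplace degenerate by Lemma~\ref{lem:Laplaceterminatemstepsalgebraic}.

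The main obstacle I anticipate is bookkeeping with the indices and with the subtle distinction that Goursat degeneracy of $P_m$ does \emph{not} mean $H^P_m = 1$ (that would be Laplace degeneracy) but rather that $P_{m+1}$ fails to exist while the \emph{formula} for $H^P_{m+1}$ still evaluates to $1$; one must be careful that Theorem~\ref{thm:symmetryLaplaceinvariants} and Lemma~\ref{lemma:HorKequals1Goursatdegenerate} are being applied at compatible indices and that the ``assuming $D_{-m-1}$ exists'' hypothesis is genuinely what licenses writing down $K^D_{-m-1}$. A secondary subtlety is making sure the shift between $H$ at level $m+1$ and the degeneracy of $D$ at level $-m-1$ lands correctly — tracing through the $m=0$ base identity $H^P(i+1,j) = K^D(i,j)$ together with the recurrence symmetry from Corollary~\ref{cor:laplacerecurrence} should pin it down.
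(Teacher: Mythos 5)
Your first route (restrict to a finite patch, lift, apply Lemma~\ref{lem:goursatliftstolaplace} to get $\hat P_{m+1}$ Laplace degenerate, then Proposition~\ref{prop:bsdlaplace} to the pair $(\hat P,\hat D)$, then project back down) is exactly the paper's proof. The ``alternative'' algebraic route is not genuinely different --- it just inlines the proof of Proposition~\ref{prop:bsdlaplace} --- and it carries a uniform off-by-one that you yourself flag: by Lemma~\ref{lem:Laplaceterminatemstepsalgebraic} the Laplace degeneracy of $\hat P_{m+1}$ is encoded by $\hat H^{P}_{m}=1$ (not $\hat H^{P}_{m+1}$), Theorem~\ref{thm:symmetryLaplaceinvariants} then gives $K^{\hat D}_{-m}=1$, and it is $K_{-m}=1$ (not $K_{-m-1}=1$) that says $\hat D_{-m-1}$ is Laplace degenerate; with those indices corrected the chain closes as intended.
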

\proof{
    It suffices to consider a finite patch of $P$ with size $\Sigma_{m+2,m+3}$ (so that the corresponding patch of $D$ has size $\Sigma_{m+1,m+2}$).
	By Lemma~\ref{lem:goursatliftstolaplace}, any lift $\hat P$ 
    of $P$ is such that $\hat P_{m+1}$ is Laplace degenerate. Hence it follows from Proposition~\ref{prop:bsdlaplace} that $\hat D_{-m-1}$ is Laplace degenerate, which implies that $D_{-m-1}$ is Laplace degenerate as well.\qed
}

Combined, Proposition~\ref{prop:bsdlaplace} and Proposition~\ref{prop:bsdgoursat} provide a proof of Theorem~\ref{thm:symmetryLaplacedegenerateBSandD}.

\FloatBarrier

\section{Terminating Laplace sequences}\label{section:terminating}\label{sec:terminating}

In this section we show that Laplace (and Goursat) terminating Laplace sequences of Kœnigs nets are finite. We begin with a proof using Laplace invariants and the symmetry between the Laplace invariants of $P$ and $D$ shown in Theorem~\ref{thm:symmetryLaplaceinvariants}. 
In the following, if a Laplace transform $P_m$ Laplace degenerates to a curve, we simply denote by $P_m(j)$ the point $P_m(0,j)$ which equals $P_m(i,j)$ for all $i$.  If $P_{-m}$ is Laplace degenerate, we write $P_{-m}(i) = P_{-m}(i,0)$.

\begin{theorem}\label{thm:BSandDiagonoalIntersectionNetcompatible}
	Let $P\colon \Sigma \to \RP^n$ be a BS-Kœnigs net such that $P_{m}$ is Laplace degenerate. Suppose that $P_{-m-1}$ and $D_{-m}$ exist. Then, $P_{-m-1}$ and $D_{-m}$ are Laplace degenerate and $P_{-m-1}(i) = D_{-m}(i)$ for all $i \in \Z$, see Figure~\ref{fig:koenigslaplace}.
\end{theorem}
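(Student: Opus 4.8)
The plan is to combine the algebraic characterization of Laplace degeneracy with the symmetry theorem relating the Laplace invariants of a BS-Kœnigs net $P$ and its diagonal intersection net $D$. First I would observe that, by Lemma~\ref{lem:Laplaceterminatemstepsalgebraic}, the hypothesis that $P_m$ is Laplace degenerate is equivalent to $H^P_{m-1} = 1$ identically. By Proposition~\ref{prop:bsdlaplace} this immediately yields that $D_{-m}$ is Laplace degenerate, and hence (using that $\mathcal{L}_+$ and $\mathcal{L}_-$ are mutually inverse up to an index shift, together with Lemma~\ref{lem:Laplaceterminatemstepsalgebraic} applied to $D$) that $P_{-m-1}$ is also Laplace degenerate — here one passes through $D$, using that $P_{-m-1}$ is obtained from $D_{-m}$ by an inverse-Laplace type relation, or more directly by applying Theorem~\ref{thm:BSandDKoenigslaplacedegunsymmetric} to $P$. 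So the first half of the statement, that both $P_{-m-1}$ and $D_{-m}$ are Laplace degenerate, follows formally from the machinery already assembled.

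The substantive content is the pointwise identity $P_{-m-1}(i) = D_{-m}(i)$ for all $i$. My plan is to prove this by a local, geometric argument using the explicit Laplace transform formula of Lemma~\ref{lem:expllaplacetransform} together with the characterization of BS-Kœnigs nets via hyperplanes from Lemma~\ref{lem:koenigslift}. Concretely, I would pass to an extensive lift $\hat P$ of $P$ (so $\hat D$ is the corresponding lift of $D$, and both degeneracy statements are preserved), restrict to a suitable finite patch of size $\Sigma_{m+1,m+1}$ or so, and use Lemma~\ref{lem:koenigslift} to obtain the two hyperplanes $U_1, U_2$ with $\hat P(i,j) \in U_1$ when $i+j$ is even and $\hat P(i,j) \in U_2$ when $i+j$ is odd, while $\hat D$ lies in $U_1 \cap U_2$. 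Then I would express $\hat P_{-m-1}(i)$ via Lemma~\ref{lem:expllaplacetransform} (appropriately transposed/reflected for the backward direction) as an intersection $\bigcap_k \pah{\hat P}(k)$ of the parameter spaces of $\hat P$ running along the degenerate direction, and similarly $\hat D_{-m}(i)$ as the analogous intersection of parameter spaces of $\hat D$. The key point is that each parameter space of $\hat D$ in this direction is contained in $U_1 \cap U_2$ and in the join of the corresponding two parameter spaces of $\hat P$ (one from the even sublattice, one from the odd), so that the intersection computing $\hat D_{-m}(i)$ is forced to coincide with the point computing $\hat P_{-m-1}(i)$; projecting back down with the center $C$ then gives the claim for $P$ and $D$ themselves.

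An alternative route, which may be cleaner and which I would develop in parallel, is purely dimension-counting: since $P_m$ is Laplace degenerate, Corollary~\ref{cor:Laplaceterminatemsteps} (applied in the lifted, extensive setting) says the relevant $m$-fold intersections of vertical parameter spaces of $\hat P$ are points independent of one coordinate; reversing the direction, the backward degenerations $\hat P_{-m-1}$ and $\hat D_{-m}$ are each computed as intersections of horizontal parameter spaces, and one shows that the horizontal parameter space $\pah{\hat D}(j)$ equals the intersection of $U_1 \cap U_2$ with $\pah{\hat P}(j) \vee \pah{\hat P}(j+1)$ (or a similar explicit expression coming from the diagonal-intersection-net definition), which then forces the two nested intersections to produce the same point. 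Either way, the identity $P_{-m-1}(i) = D_{-m}(i)$ is the geometric heart of the matter.

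The main obstacle I anticipate is bookkeeping the index shifts and the even/odd sublattice structure correctly: the diagonal intersection net lives on a shifted, coarser-feeling lattice, the backward Laplace transforms introduce their own index shifts (as in Theorem~\ref{thm:symmetryLaplaceinvariants} and its following remark), and Lemma~\ref{lem:koenigslift} distinguishes $U_1$ and $U_2$ by parity — so I must be careful that the parameter spaces of $\hat D$ I intersect are genuinely the images under the diagonal construction of the $\hat P$-parameter spaces I think they are, and that the ``$+1$'' offset between $P_{-m-1}$ and $D_{-m}$ comes out on the correct side. A secondary subtlety is justifying that the various intersections are nonempty points (not higher-dimensional or empty) precisely under the stated existence hypotheses on $P_{-m-1}$ and $D_{-m}$; this is where $\Sigma_{m,m}$-extensivity of the lift and Corollary~\ref{cor:Laplacedegeneracy} do the work, and I would invoke them explicitly to rule out degenerate dimension drops.
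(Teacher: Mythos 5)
Your first paragraph is essentially sound: $D_{-m}$ Laplace degenerate follows from Proposition~\ref{prop:bsdlaplace}, and $P_{-m-1}$ Laplace degenerate can be imported from Theorem~\ref{thm:BSandDKoenigslaplacedegunsymmetric} (whose later proof, via Propositions~\ref{prop:bskoenigslaplace}--\ref{prop:dkoenigsgoursat}, does not use the present theorem, so there is no circularity). One caveat: the parenthetical route ``$P_{-m-1}$ is obtained from $D_{-m}$ by an inverse-Laplace type relation'' is not correct --- $P_{-m-1}$ is not a Laplace transform of $D_{-m}$, and the algebraic detour through the recurrence of Corollary~\ref{cor:laplacerecurrence} passes through a singular step ($H_{m-1}=1$), so only the ``apply Theorem~\ref{thm:BSandDKoenigslaplacedegunsymmetric} directly'' branch of your argument actually works. (The paper, for what it is worth, derives the degeneracy of $P_{-m-1}$ as a byproduct of the geometric argument below rather than by citation.)

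The genuine gap is in the pointwise identity $P_{-m-1}(i)=D_{-m}(i)$, which you rightly call the heart of the matter but do not establish. The containments you propose --- $\pah{\hat D}(j)\subset U_1\cap U_2$ and $\pah{\hat D}(j)\subset \pah{\hat P}(j)\vee\pah{\hat P}(j+1)$ --- are true but too weak: intersecting the joins $\pah{\hat P}(j)\vee\pah{\hat P}(j+1)$ over $j$ yields a space that contains but is in general strictly larger than $\bigcap_j\pah{\hat P}(j)$, and cutting with $U_1\cap U_2$ does not bridge this (a naive codimension count even predicts an empty intersection, so no dimension argument can identify the two sides; moreover $\bigcap_j\pah{\hat P}(j)\subset U_1\cap U_2$ is itself a nontrivial claim, since each $\pah{\hat P}(j)$ meets both hyperplanes). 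The input that actually closes the argument is Equation~\eqref{eq:LaplaceDinpahP}: the first \emph{backward Laplace transform} of the diagonal net satisfies $D_{-1}(i,j)\in\pah{P}(j+1)$, i.e.\ it lies in a \emph{single} horizontal parameter space of $P$, not merely in a join of two. Since $D_{-m}$ is Laplace degenerate, Corollary~\ref{cor:Laplaceterminatemsteps} (read backwards, applied to $D_{-1}$) makes all the spaces $\pah{D_{-1}}(j)$ concurrent at the single point $D_{-m}(0)$, whence $D_{-m}(0)\in\bigcap_j\pah{P}(j)$; by Corollary~\ref{cor:Laplaceterminatemsteps} again this is simultaneously the Laplace degeneracy of $P_{-m-1}$ and the equality $P_{-m-1}(0)=D_{-m}(0)$. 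Your sketch never isolates this containment in a single parameter space, so the two nested intersections are not, as written, ``forced to coincide.''
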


\begin{proof}
It suffices to prove the theorem locally for $$P\colon  \Sigma_{m+1,m+2} \to \RP^n \text{ and } D\colon \Sigma_m \times \{-1,\ldots, m+2\} \to \RP^n.$$ By Proposition~\ref{prop:bsdlaplace}, $D_{-m}$ is Laplace degenerate because $P_{m}$ is Laplace degenerate.  Without loss of generality, we assume that $P$ and $D$ are extensive. By Corollary~\ref{cor:Laplaceterminatemsteps}, the spaces $\pah{D}(j) $ for $j = -1, \ldots, m+2$ are concurrent at $D_{-m}(0)$ because $D_{-m}$ is Laplace degenerate. By Equation~\eqref{eq:LaplaceDinpahP}, we see that $D_{-1}(i,j)$ is contained in $\pah{P}(j+1)$. Then, $D_{-m}(0)$ is contained in $\pah{P}(j+1)$. Thus, $D_{-m}(0)$ is contained in $\bigcap^{m+2}_{j=0} \pah{P}(j)$. By Corollary~\ref{cor:Laplaceterminatemsteps}, $P_{-m-1}$ is Laplace degenerate.
\end{proof}

\begin{figure}[tb] 
	\centering
	\includegraphics[width=0.38\textwidth]{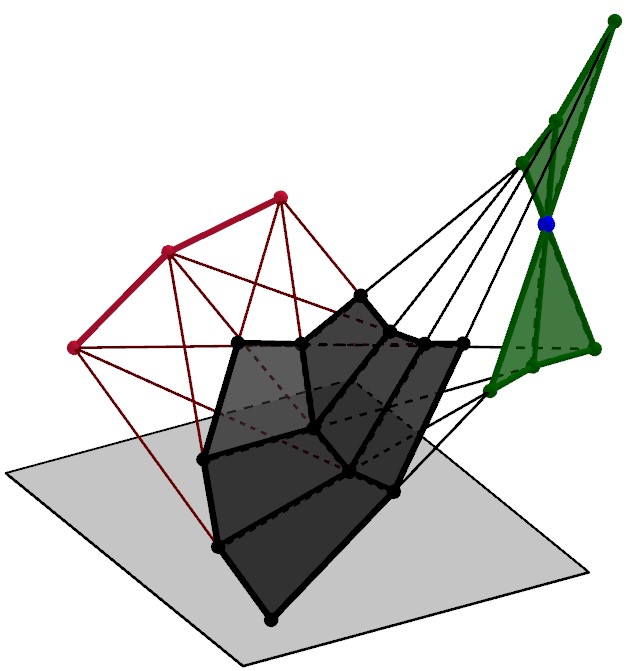}
	\includegraphics[width=0.61\textwidth]{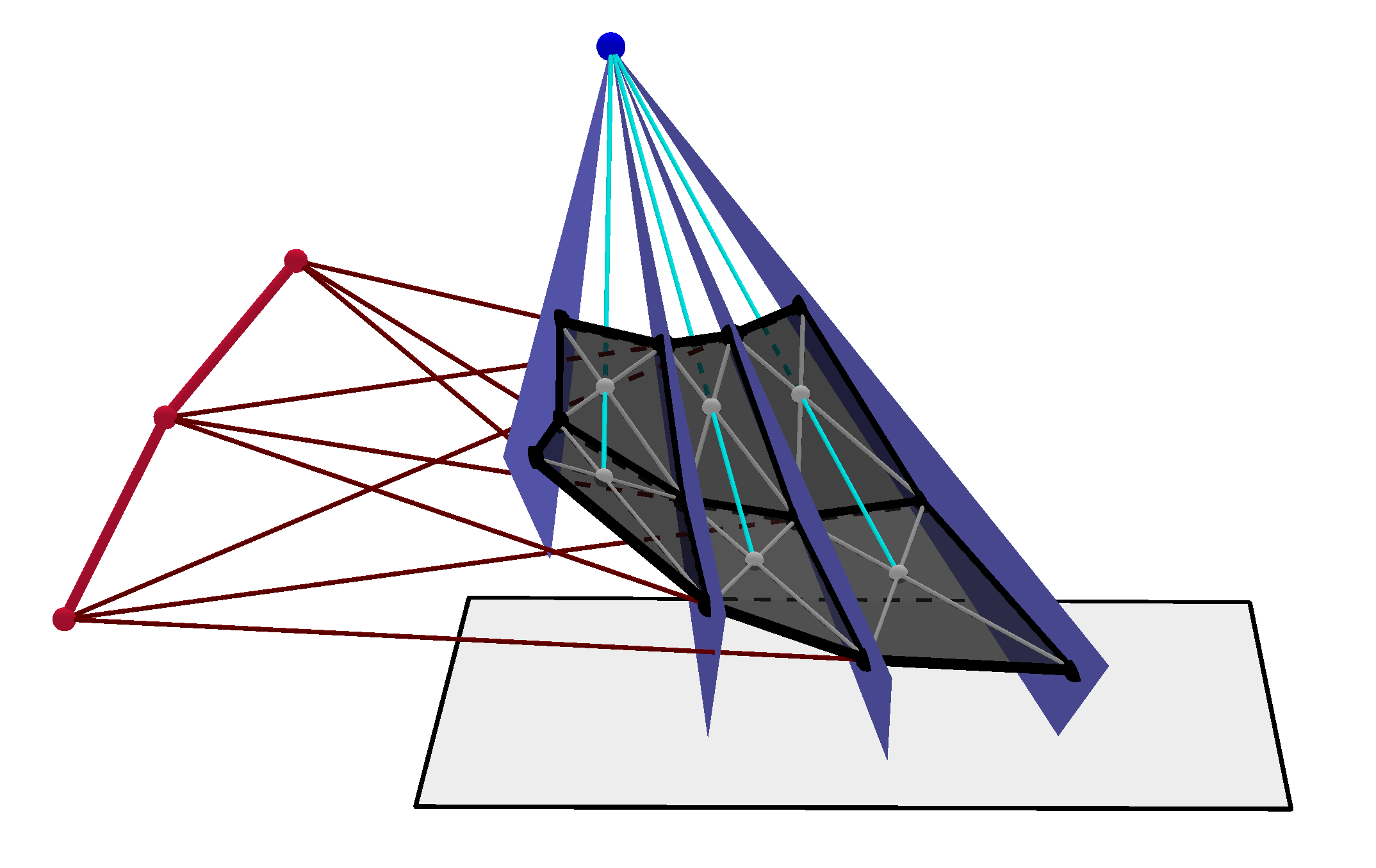}
	\caption{A BS-Kœnigs net $P \colon \Sigma_{2,3} \to \RP^3$ (black) with Laplace degenerate $P_1$ (red). By Theorem~\ref{thm:BSandDiagonoalIntersectionNetcompatible}, $D_{-1}$ and $P_{-2}$ (blue point) are equal and Laplace degenerate. The four blue planes $\pah P(j)$ are concurrent at $P_{-2}$.}
	\label{fig:koenigslaplace}
\end{figure}

Although we think that the proof of Theorem~\ref{thm:BSandDiagonoalIntersectionNetcompatible} is quite nice, the claim of Theorem~\ref{thm:BSandDiagonoalIntersectionNetcompatible} involves an additional assumption compared to our main theorem (Theorem~\ref{thm:BSandDKoenigslaplacedegunsymmetric}): that $D_{-m}$ exists. To be able to drop this assumption we need a slightly different proof strategy, which we present in the following. This alternative strategy builds on prior work \cite{fairley2023thesis, bobenkofairley2023circularnets} concerning Laplace sequences of Q-nets that are inscribed in quadrics.

Recall that due to Lemma~\ref{lem:koenigslift} a (finite) extensive BS-Kœnigs net is contained in a pair of hyperplanes $U_1, U_2$. In this section it turns out to be practical to view
\begin{align}
	\mathcal U := U_1 \cup U_2 \label{eq:uquadric}
\end{align}
as a degenerate quadric. The singular points of this quadric are the points in $U_1 \cap U_2$.
The plan is to apply the following theorem \cite{fairley2023thesis, bobenkofairley2023circularnets}, which we recall without proof.

\begin{theorem}\label{thm:Qnetinquadricconjugatelaplacepoints}
	Let $m,n\in \N$ with $n \geq 2$ and let $\mathcal Q \subset \RP^n$ be a quadric. Consider a Q-net $P\colon \Sigma_{m,m} \to \RP^n$ such that
	\begin{enumerate}
		\item the Laplace transforms $P_{-m}$, $P_{m}$ are well-defined, 
		\item and all points of $P$ except maybe $P(m,m)$ are in $\mathcal Q$.
	\end{enumerate}

	Then $P(m,m)$ is in $\mathcal Q$ if and only if $P_{m}(0,0)$ is conjugate to $P_{-m}(0,0)$ with respect to $\mathcal Q$.
\end{theorem}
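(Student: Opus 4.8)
The plan is to reduce to the case that $P$ is extensive and then to induct on $m$, with the base case $m=1$ a classical fact about complete quadrilaterals. For the reduction, restrict to the span of the image of $P$ so that $n\le 2m$, and if $P$ is not already extensive, lift it: view $\RP^n$ as a subspace $E\subset\RP^{2m}$, pick $C$ supplementary to $E$, let $\pi\colon\RP^{2m}\setminus C\to E$ be the central projection, take an extensive lift $\hat P\colon\Sigma_{m,m}\to\RP^{2m}$ of $P$ as in Lemma~\ref{lem:lift}, and replace $\mathcal Q$ by the cone $\hat{\mathcal Q}:=\pi^{-1}(\mathcal Q)\cup C$ with bilinear form $\hat\varphi(x,y):=\varphi(\pi(x),\pi(y))$ for a fixed linear section of $\pi$. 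Cross-ratios and incidence with $\mathcal Q$ are preserved, so $\hat P$ has all points on $\hat{\mathcal Q}$ except possibly $\hat P(m,m)$, the transforms $\hat P_{\pm m}$ are lifts of $P_{\pm m}$ (hence well-defined and disjoint from $C$), and $\hat\varphi(\hat P_m(0,0),\hat P_{-m}(0,0))=\varphi(P_m(0,0),P_{-m}(0,0))$. Thus it suffices to treat an extensive $P\colon\Sigma_{m,m}\to\RP^{2m}$, allowing $\mathcal Q$ to be degenerate, which I assume from now on.

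For the base case $m=1$, the four points $P(0,0),P(1,0),P(1,1),P(0,1)$ are coplanar with no three collinear (since $P_{\pm1}$ exist), and $P_1(0,0)=(P(0,0)\vee P(0,1))\cap(P(1,0)\vee P(1,1))$ and $P_{-1}(0,0)=(P(0,0)\vee P(1,0))\cap(P(0,1)\vee P(1,1))$ are two of the three vertices of the diagonal triangle of the complete quadrilateral they span. I would work in a projective frame of the quad's plane $\Pi$ in which these four points are the standard points; then a short computation shows that the linear equation in the coefficients of a conic through $P(0,0),P(1,0),P(0,1)$ expressing that it also passes through $P(1,1)$ is literally the same as the one expressing that $P_1(0,0)$ and $P_{-1}(0,0)$ are conjugate with respect to it. Applied to the conic $\mathcal Q\cap\Pi$ (the sub-cases $\Pi\subset\mathcal Q$ and $\mathcal Q\cap\Pi$ a line pair being read off directly from $\varphi$) this yields the equivalence. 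Phrased classically: the diagonal triangle of a complete quadrilateral is self-polar with respect to every conic through its four vertices, and conversely a conic through three of the vertices that makes two diagonal-triangle vertices conjugate contains the fourth.

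For the inductive step, let $m\ge2$ and assume the statement for $m-1$. Consider the four $\Sigma_{m-1,m-1}$-subpatches $\SW P,\NW P,\SE P,\NE P$ (notation as in the proof of Lemma~\ref{lem:expllaplacetransform}). All points of the first three lie on $\mathcal Q$, and all points of $\NE P$ lie on $\mathcal Q$ except possibly its far corner $P(m,m)$. Their iterated forward and backward Laplace transforms at $(0,0)$ are exactly the eight points $P_{m-1}(\varepsilon)$ and $P_{-(m-1)}(\varepsilon)$, $\varepsilon\in\{0,1\}^2$, so the inductive hypothesis gives that $P_{m-1}(\varepsilon)$ and $P_{-(m-1)}(\varepsilon)$ are conjugate for $\varepsilon\in\{(0,0),(0,1),(1,0)\}$, and that $P_{m-1}(1,1)$ and $P_{-(m-1)}(1,1)$ are conjugate if and only if $P(m,m)\in\mathcal Q$. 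Together with the identities
\begin{align*}
	P_m(0,0) &= (P_{m-1}(0,0)\vee P_{m-1}(0,1))\cap(P_{m-1}(1,0)\vee P_{m-1}(1,1)),\\
	P_{-m}(0,0) &= (P_{-(m-1)}(0,0)\vee P_{-(m-1)}(1,0))\cap(P_{-(m-1)}(0,1)\vee P_{-(m-1)}(1,1)),
\end{align*}
the theorem reduces to a purely projective statement about these eight points.

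The main obstacle is that last projective statement: the three conjugacy relations alone do not suffice, so one must also use that $\{P_{m-1}(\varepsilon)\}$ and $\{P_{-(m-1)}(\varepsilon)\}$ are each planar quads and the additional incidences linking the two quads that come from extensivity of $P$ (concurrences of the parameter spaces, cf.\ Lemma~\ref{lem:expllaplacetransform}) and from $P_{m-1}$ and $P_{-(m-1)}$ sharing the inner transform $P_{m-2}$. My plan for this step is to lift the quad of $P_{m-1}$ and the quad of $P_{-(m-1)}$ into one small projective space, record the polar data forced by the three conjugacies, and then finish by invoking the $m=1$ case inside these quads; checking that conjugacy propagates correctly through this final Laplace step in both directions is the delicate point. (This is carried out in \cite{fairley2023thesis, bobenkofairley2023circularnets}.)
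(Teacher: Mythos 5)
First, note that the paper does not prove this theorem at all: it is explicitly ``recalled without proof'' from \cite{fairley2023thesis, bobenkofairley2023circularnets}, so there is no in-paper argument to compare your attempt against. Judged on its own terms, your reduction to an extensive net in $\RP^{2m}$ with the pulled-back (degenerate) quadric is sound: conjugacy and incidence are preserved by the lift, and the lifted Laplace transforms stay well-defined because any degeneracy of $\hat P_k$ would project to a degeneracy of $P_k$. Your base case $m=1$ is also correct and complete: taking $P(0,0),P(1,0),P(0,1)$ as coordinate points, the restricted form is $a\,xy+b\,xz+c\,yz$, membership of $P(1,1)=[1{:}1{:}1]$ reads $a+b+c=0$, and the polarization evaluated at the focal points $[1{:}0{:}1]$ and $[1{:}1{:}0]$ gives $\tfrac{1}{2}(a+b+c)$, so the two conditions coincide, degenerate restrictions included.

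The genuine gap is the inductive step. After applying the hypothesis to the four $\Sigma_{m-1,m-1}$ subpatches, what remains is precisely the hard content of the theorem: given $P_{m-1}(\varepsilon)\perp P_{-(m-1)}(\varepsilon)$ for three of the four $\varepsilon$, show that the fourth conjugacy is equivalent to $P_m(0,0)\perp P_{-m}(0,0)$. You rightly observe that the three conjugacies alone do not suffice and that extra incidence data must enter, but the plan you sketch does not close this: the vertices of the quad of $P_{m-1}$ do not lie on $\mathcal Q$, so the $m=1$ case cannot be invoked for that quad with respect to $\mathcal Q$, and its backward focal point is $P_{m-2}(1,1)$ rather than $P_{-m}(0,0)$, so even with some auxiliary quadric the $m=1$ case would relate the wrong pair of points. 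You would need to exhibit the auxiliary quadric (or polarity argument) explicitly and verify the propagation of conjugacy through the final Laplace step in both directions; deferring exactly this ``delicate point'' to the very references from which the theorem is quoted means the proposal establishes the surrounding bookkeeping but not the theorem itself.
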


Note that it is permissible that the quadric $\mathcal{Q}$ in Theorem~\ref{thm:Qnetinquadricconjugatelaplacepoints} is degenerate, hence it applies to extensive BS-Kœnigs nets (and thus to the lifts of BS-Kœnigs nets) with $\mathcal Q = \mathcal U$.
This observation is the key idea for the proof of Theorem~\ref{thm:BSandDKoenigslaplacedegunsymmetric}. In a sense, the realization that BS-Kœnigs nets lift to BS-Kœnigs nets inscribed in a quadric allows us to proceed analogously to \cite{fairley2023thesis,bobenkofairley2023circularnets}, where it is shown that Q-nets that are inscribed in quadrics have the property that the Laplace sequence terminates in one direction if and only if it terminates in both directions. 

We begin with two lemmas that we need for our new proof strategy.

\begin{lemma}\label{lemma:nosingularpointslaplacesequence}
	Let $P\colon \Sigma_{m,m} \to \RP^{2m}$ be an extensive BS-Kœnigs net, and let $\mathcal U$ be the degenerate quadric  -- as in Equation~\eqref{eq:uquadric} -- that contains $P$. Suppose that the Laplace transforms $P_{m}$ and $P_{-m}$ are well defined. Then, $P_{m}(0,0)$ and $P_{-m}(0,0)$ are not singular points of $\mathcal U$.
\end{lemma}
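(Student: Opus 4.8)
The plan is to show that if, say, $P_m(0,0)$ were a singular point of $\mathcal U$, then it would lie in $U_1 \cap U_2$, and then to derive a contradiction with extensivity by exhibiting that the net $P$ would have to join a proper subspace. First I would recall from Lemma~\ref{lem:koenigslift} that the extensive BS-Kœnigs net $P$ satisfies $P(i,j) \in U_1$ when $i+j$ is even and $P(i,j) \in U_2$ when $i+j$ is odd, where $U_1 \neq U_2$ are hyperplanes, and that the singular points of $\mathcal U = U_1 \cup U_2$ are exactly the points of the codimension-two subspace $U_1 \cap U_2$. So the assertion to prove is that $P_m(0,0) \notin U_1 \cap U_2$ and $P_{-m}(0,0) \notin U_1 \cap U_2$.

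Next I would use the explicit description of $P_m(0,0)$ from Lemma~\ref{lem:expllaplacetransform}, namely $P_m(0,0) = \bigcap_{k=0}^m \pav{P}(k)$, each $\pav{P}(k)$ being a parameter line (a $1$-dimensional space, since $P$ is $\Sigma_{m,m}$-extensive and $P_m$ exists and is a point). The point $P_m(0,0)$ lies on the line $\pav{P}(0) = P(0,0) \vee P(0,1) \vee \cdots \vee P(0,m)$, which is spanned by points that alternate between $U_1$ and $U_2$. If $P_m(0,0)$ were in $U_1 \cap U_2$, then in particular it would lie in $U_2$; but the line $\pav{P}(0)$ meets $U_2$ only in the points $P(0,j)$ with $j$ odd (a single point if $m \geq 1$, since a line not contained in a hyperplane meets it in at most one point, and $\pav{P}(0) \not\subset U_2$ because it contains $P(0,0) \in U_1 \setminus U_2$). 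Hence $P_m(0,0)$ would have to coincide with that point $P(0,j)$. But I can also run the same argument with $\pav{P}(1)$, forcing $P_m(0,0)$ to be a point of $P$ in the column $i=1$; these two points are distinct by non-degeneracy of $P$, giving the contradiction. The argument for $P_{-m}(0,0)$ is entirely analogous using the dual description $P_{-m}(0,0) = \bigcap_{k=0}^m \pah{P}(k)$ along rows instead of columns (one should first set up, or invoke, the row-analogue of Lemma~\ref{lem:expllaplacetransform}, which holds by the obvious symmetry $i \leftrightarrow j$).

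Alternatively, and perhaps more cleanly, one can argue directly: suppose $P_m(0,0) \in U_1 \cap U_2$. Since $P_m(0,0) = \bigcap_{k=0}^m \pav P(k)$ lies on every parameter line $\pav P(k)$, and each such line is spanned by $\{P(k,0), \ldots, P(k,m)\}$ whose points alternate between $U_1$ and $U_2$, adding $P_m(0,0) \in U_1 \cap U_2$ to this spanning set does not increase the join. Thus $\pav P(k) \subset U_1 \cap U_2$ would fail, but rather: the join of all of $P$, which is $\bigvee_k \pav P(k)$, is contained in $\left(\bigvee_{k,j \text{ even}} P(k,j)\right) \vee \left(\bigvee_{k,j \text{ odd}} P(k,j)\right) \vee P_m(0,0)$. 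Working this through shows the total join lands inside $U_1 \vee U_2$ minus a dimension, i.e.\ $P$ joins at most a $(2m-1)$-dimensional space, contradicting extensivity (which demands dimension $2m$). I would pick whichever of these two routes is shortest to write.

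The main obstacle I anticipate is bookkeeping the dimension count carefully enough to make the extensivity contradiction airtight — in particular being precise about why forcing $P_m(0,0)$ into $U_1 \cap U_2$ genuinely collapses a dimension of the total join rather than merely appearing to. The line-intersection version (showing $P_m(0,0)$ would have to equal two distinct vertices of $P$) sidesteps most of this, so that is likely the cleaner path; the only subtlety there is checking that $\pav P(0)$ and $\pav P(1)$ are not contained in $U_2$, which follows because each contains a vertex $P(k,0)$ with $k$ even, hence in $U_1$, and $P(k,1) \in U_2$, so the line genuinely crosses and meets $U_2$ in exactly one point.
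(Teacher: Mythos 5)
Both routes you sketch break down at the same place, and for the same underlying reason. The central technical error is the claim that each parameter space $\pav{P}(k)$ is a line. Since $P\colon \Sigma_{m,m}\to\RP^{2m}$ is extensive, its restriction to the column $\{k\}\times\Sigma_m$ is extensive as well, so $\pav{P}(k)$ is an $m$-dimensional subspace --- this is precisely why Lemma~\ref{lem:expllaplacetransform} must intersect all $m+1$ of them to cut down to the single point $P_m(0,0)$. For $m\geq 2$ the intersection $\pav{P}(0)\cap U_2$ is therefore an $(m-1)$-plane, not a single point, and it contains far more than the vertices $P(0,j)$ with $j$ odd; so $P_m(0,0)\in U_2$ does not force $P_m(0,0)$ to coincide with a vertex of $P$, and the contradiction you aim for never materialises. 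The fallback dimension count fails as well, and not merely for bookkeeping reasons: the join of $P$ equals $U_1\vee U_2=\RP^{2m}$ whatever the position of $P_m(0,0)$, so placing $P_m(0,0)$ in $U_1\cap U_2$ costs no dimension of the total join. The step you yourself flag as the main obstacle is a genuine non-implication, not a detail to be tightened.

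There is also a structural reason why no argument of this shape can succeed: you never use the hypothesis that $P_{-m}$ is well defined, yet the conclusion about $P_m(0,0)$ is false without it. Lemma~\ref{lemma:laplacesingularpointinline} shows that singular points appear on the line $P_m(0,0)\vee P_m(0,1)$ exactly when the backward transform degenerates, and in the proof of Proposition~\ref{prop:bskoenigslaplace} the points of $P_{-m-1}$ are shown to \emph{be} singular --- precisely in a situation where the opposite-direction transform fails to exist. Singularity of an iterated Laplace point is thus governed by the behaviour of the sequence in the \emph{other} direction, which a purely forward, combinatorial-alternation argument cannot detect. The paper instead perturbs: it replaces $P(0,0)$ by a generic point $O(0,0)$ on the line $P(0,0)\vee P(0,1)$, which leaves all forward transforms unchanged while making $O_{-m}(0,0)$ well defined; if $P_m(0,0)=O_m(0,0)$ were singular it would be conjugate to $O_{-m}(0,0)$, so Theorem~\ref{thm:Qnetinquadricconjugatelaplacepoints} would force $O(0,0)$, and hence the whole line $P(0,0)\vee P(0,1)$, into $\mathcal U$ --- impossible because $P(0,0)$ and $P(0,1)$ lie in different hyperplanes of $\mathcal U$. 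Any correct proof needs some such mechanism that genuinely invokes the backward transform.
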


\begin{proof}
	We give a proof by contradiction, that is we assume that $P_{m}(0,0)$ is a singular point.
	Define a non-degenerate Q-net $O\colon \Sigma_{m,m} \to \RP^{2m}$ such that:
	\begin{enumerate}
		\item $O(0,0)$ is a point on the line spanned by $P(0,0)$ and $P(0,1)$ not equal to $P(0,0)$, $P(0,1)$ or $P_1(0,0)$, 
		\item $O$ agrees with $P$ everywhere else,
		\item and $O_{-m}(0,0)$ is well-defined.
	\end{enumerate}
	Such a Q-net exists because the condition that $O_{-m}(0,0)$ is well-defined is an open condition. Moreover, $O$ is non-degenerate by construction. As a result of the definition of $O$, the iterated Laplace transforms of $P$ and $O$ are equal in the positive direction, that is $P_k = O_k$ for all $k > 0$. Consequently, $O_{m}(0,0)$ is also a singular point. Therefore $O_{-m}(0,0)$ is conjugate to $O_{m}(0,0)$ because every point is conjugate to a singular point. Thus, by Theorem~\ref{thm:Qnetinquadricconjugatelaplacepoints}, we get that $O(0,0)$ is contained in the quadric $\mathcal{U}$.  Therefore, the whole line $P(0,0) \vee P(0,1)$ is contained in $\mathcal{U}$, because this line contains the three points $P(0,0), P(0,1)$ and $O(0,0)$ that are also in $\mathcal U$. However, for the lift of a BS-Kœnigs net this is impossible, as $P(0,0)$ and $P(0,1)$ are always in different hyperplanes of $\mathcal{U}$. Thus we have found a contradiction, and therefore $P_{m}(0,0)$ cannot be a singular point. The argument for $P_{-m}(0,0)$ is analogous.
\end{proof}

\begin{lemma}\label{lemma:laplacesingularpointinline}
	Let $P\colon  \Sigma_{m,m+1} \to \RP^{2m+1}$ be an extensive BS-Kœnigs net, and let $\mathcal U$ be the degenerate quadric that contains $P$. Suppose that the Laplace transforms $P_{m}$ and $P_{-m}$ are well defined and that the two points of $P_{m}$ are distinct. The line $P_{m}(0,0) \vee P_{m}(0,1)$ contains a singular point of $\mathcal{U}$ if and only if  $P_{-m}(0,0) = P_{-m}(0,1)$.
\end{lemma}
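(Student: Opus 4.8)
The plan is to combine Theorem~\ref{thm:Qnetinquadricconjugatelaplacepoints} and Lemma~\ref{lemma:nosingularpointslaplacesequence} with an elementary analysis of how a line can meet the singular locus of $\mathcal U$. First I would pass to the extensive case: replacing $P$ by a lift affects neither its Laplace sequence nor the pair of hyperplanes $U_1,U_2$ of Lemma~\ref{lem:koenigslift}, and since $m+(m+1)=2m+1$ such a lift spans $\RP^{2m+1}$. Fix a symmetric bilinear form $\p$ defining $\mathcal U$; in suitable coordinates $U_i=\{[x]:\xi_i(x)=0\}$ for linear functionals $\xi_1,\xi_2$ and $\p(x,y)=\tfrac12(\xi_1(x)\xi_2(y)+\xi_1(y)\xi_2(x))$, so the singular locus is $\mathcal S:=U_1\cap U_2=\{[x]:\xi_1(x)=\xi_2(x)=0\}$, of projective codimension $2$. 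Abbreviate $A:=P_m(0,0)$, $B:=P_m(0,1)$ (distinct by hypothesis), $A':=P_{-m}(0,0)$, $B':=P_{-m}(0,1)$, and $\ell:=A\vee B$.

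The patch $\Sigma_{m,m+1}$ contains exactly two $\Sigma_{m,m}$ subpatches, supported on the columns $\{0,\dots,m\}$ and on the columns $\{1,\dots,m+1\}$. All of their points are points of $P$, hence lie on $\mathcal U$, so Theorem~\ref{thm:Qnetinquadricconjugatelaplacepoints} applied with $\mathcal Q=\mathcal U$ gives that $A$ is conjugate to $A'$ and $B$ is conjugate to $B'$ relative to $\mathcal U$. Running the perturbation argument of Lemma~\ref{lemma:nosingularpointslaplacesequence} on each subpatch -- move a corner off $\mathcal U$ along an incident edge of $P$ that leaves the relevant iterated Laplace transform unchanged, and contradict Theorem~\ref{thm:Qnetinquadricconjugatelaplacepoints} -- shows that none of $A,B,A',B'$ is a singular point of $\mathcal U$; hence $A^\perp,B^\perp,{A'}^\perp,{B'}^\perp$ are honest hyperplanes, each of which contains $\mathcal S$.

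The geometric core is the following dichotomy: for non-singular $X,Y$, the line $X\vee Y$ meets $\mathcal S$ if and only if $X^\perp=Y^\perp$. Indeed $X\vee Y$ meets $\mathcal S$ iff $\xi_1$ and $\xi_2$ have a common zero on it, i.e.\ iff $\xi_1(x)\xi_2(y)=\xi_1(y)\xi_2(x)$ for representatives (equivalently $\p(x,y)^2=\p(x,x)\p(y,y)$), which is exactly the condition $[\xi_1(x):\xi_2(x)]=[\xi_1(y):\xi_2(y)]$, i.e.\ that $X^\perp=\{\xi_1(x)\xi_2+\xi_2(x)\xi_1=0\}$ and $Y^\perp$ coincide; moreover two \emph{distinct} such polar hyperplanes both contain $\mathcal S$, which has codimension one in each, so they meet exactly in $\mathcal S$. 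Given this, ``$A'=B'\Rightarrow\ell$ meets $\mathcal S$'' is immediate: $A'$ is conjugate to $A$ and $B'=A'$ is conjugate to $B$, so $A'\in A^\perp\cap B^\perp$; were $A^\perp\neq B^\perp$ this intersection would be $\mathcal S$, making $A'$ singular -- a contradiction -- so $A^\perp=B^\perp$ and $\ell$ meets $\mathcal S$ by the dichotomy. For the converse, suppose $\ell$ meets $\mathcal S$, so $A^\perp=B^\perp$; writing $\theta(Z):=[\xi_1(z):\xi_2(z)]$, conjugacy of $Z,W$ reads $\theta(W)=\sigma(\theta(Z))$ for the fixed involution $\sigma([p:q])=[p:-q]$ of $\RP^1$, whence $\theta(A')=\sigma(\theta(A))=\sigma(\theta(B))=\theta(B')$, i.e.\ ${A'}^\perp={B'}^\perp$; by the dichotomy again, either $A'=B'$ -- as desired -- or the line $A'\vee B'$ meets $\mathcal S$.

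It therefore remains to exclude the case $A'\neq B'$ with $A'\vee B'$ meeting $\mathcal S$ while $A\neq B$, and this I expect to be the main obstacle (the two easy directions above do not by themselves break the symmetry between $\ell$ and $A'\vee B'$). Here I would fall back on the explicit descriptions from Lemma~\ref{lem:expllaplacetransform}: $\ell$ lies in every column parameter space $\pav{P}(k)$ for $0\le k\le m$, while $\{A'\}=\bigcap_{k=0}^{m}\pah{P}(k)$ and $\{B'\}=\bigcap_{k=1}^{m+1}\pah{P}(k)$ are genuine points, so $A'\vee B'\subseteq\bigcap_{k=1}^{m}\pah{P}(k)$. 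A singular point lying on $\ell$ then forces, for each $k$, an incidence between $\pav{P}(k)$ and $\mathcal S$, and -- by the same extensivity and codimension bookkeeping used to prove $E_0=L_0$ in the proof of Lemma~\ref{lem:expllaplacetransform} -- these incidences are jointly compatible only when $\bigcap_{k=0}^{m+1}\pah{P}(k)\neq\emptyset$, i.e.\ $A'=B'$. This final incidence chain is the degenerate-quadric analogue of the corresponding step for Q-nets in nondegenerate quadrics carried out in \cite{bobenkofairley2023circularnets,fairley2023thesis}, and adapting it to $\mathcal U=U_1\cup U_2$ is the technical heart of the argument.
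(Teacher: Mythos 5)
Your reduction of the statement to linear algebra on the pencil of hyperplanes through $\mathcal S=U_1\cap U_2$ is sound: the non-singularity of $A,B,A',B'$, the conjugacies $A\sim A'$, $B\sim B'$ from Theorem~\ref{thm:Qnetinquadricconjugatelaplacepoints} applied to the two $\Sigma_{m,m}$ subpatches, and the dichotomy ``$X\vee Y$ meets $\mathcal S$ iff $X^\perp=Y^\perp$'' are all correct, and your ``$\Leftarrow$'' direction is a complete proof essentially equivalent to the paper's (the paper phrases it as: the polar hyperplane of $A'=B'$ contains $\mathcal S$ with codimension one and contains the line $A\vee B$, which must therefore meet $\mathcal S$).

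The ``$\Rightarrow$'' direction, however, has a genuine gap exactly where you flag it. Your bookkeeping with $\theta$ only delivers ${A'}^\perp={B'}^\perp$, hence the alternative ``$A'=B'$ \emph{or} $A'\vee B'$ meets $\mathcal S$'', and as you correctly observe the conjugacy relations are symmetric between $\ell$ and $A'\vee B'$, so no amount of polar-hyperplane algebra will break the tie. The proposed fallback --- that the incidences of the column spaces $\pav P(k)$ with $\mathcal S$ are ``jointly compatible only when'' $\bigcap_{j=0}^{m+1}\pah P(j)\neq\emptyset$ --- is not an argument; no mechanism is given that converts an incidence of each $\pav P(k)$ with the codimension-2 locus $\mathcal S$ into a non-empty intersection of the row spaces, and it is not clear such a mechanism exists. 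The paper breaks the symmetry by a different device, the perturbation argument already used in Lemma~\ref{lemma:nosingularpointslaplacesequence}: assuming $A'\neq B'$, the singular point on $\ell$ together with the two conjugacies shows that the \emph{lines} $L_m=A\vee B$ and $L_{-m}=A'\vee B'$ are conjugate (each of $A',B'$ is conjugate to two distinct points of $L_m$, namely $S$ and its partner). One then replaces $P(0,0)$ by a generic point $O(0,0)$ on the edge line $P(0,0)\vee P(0,1)$ off the quadric; this leaves all forward transforms unchanged (so $O_m(0,0)=A\in L_m$) and, because $L_{-m}=P_{-m+1}(0,1)\vee P_{-m+1}(1,1)$ is built only from data unaffected by the perturbation, forces $O_{-m}(0,0)\in L_{-m}$. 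Conjugacy of the two lines then makes $O_m(0,0)$ and $O_{-m}(0,0)$ conjugate, so Theorem~\ref{thm:Qnetinquadricconjugatelaplacepoints} puts $O(0,0)$ on $\mathcal U$, forcing the whole edge line into $\mathcal U$ --- impossible since its endpoints lie in different hyperplanes $U_1,U_2$. You need this (or some equally concrete substitute) to close the case $A'\neq B'$; as written, the proof is incomplete.
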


\begin{proof}
	``$\Rightarrow$'' Consider the two joins
	\begin{align}
		L_m &:= P_{m}(0,0) \vee P_{m}(0,1), & L_{-m} := P_{-m}(0,0) \vee P_{-m}(0,1).
	\end{align}
	Let $S$ be the singular point contained in $L_m$. Let us assume that $P_{-m}(0,0)$ does not equal $P_{-m}(0,1)$, hence $L_{-m}$ is a line. First, let us show that with this assumption $L_m$ is conjugate to $L_{-m}$. By Theorem~\ref{thm:Qnetinquadricconjugatelaplacepoints}, $P_m(0,0)$ is conjugate to $P_{-m}(0,0)$ and $P_m(0,1)$ is conjugate to $P_{-m}(0,1)$. Moreover, since $S$ is a singular point, $S$ is conjugate to all points in $\RP^{2m+1}$. In particular, $S$ is conjugate to both $P_{-m}(0,0)$ and $P_{-m}(0,1)$ and therefore to all of $L_{-m}$. Moreover, due to Lemma~\ref{lemma:laplacesingularpointinline}, $P_m(0,0)$ and $P_m(0,1)$ are non-singular and therefore do not coincide with $S$. Since there are now two points on $L_m$ that are conjugate to $P_{-m}(0,0)$ and two points that are conjugate to $P_{-m}(0,1)$, we obtain that $L_m$ is indeed conjugate to $L_{-m}$. Now, define a non-degenerate Q-net $O: \Sigma_{m,m+1} \to \RP^{2m+1}$ with the same conditions as in the proof of Lemma~\ref{lemma:nosingularpointslaplacesequence}, and therefore with the same properties. We observe that

	\begin{align}
		L_{-m} &= P_{-m}(0,0) \vee P_{-m}(0,1) =  P_{-m+1}(0,1) \vee P_{-m+1}(1,1) \\
		&= O_{-m+1}(0,1) \vee O_{-m+1}(1,1) = O_{-m}(0,0) \vee O_{-m}(0,1),
	\end{align}
	where the second line follows because $P_{-m+1}(0,1)$ equals $O_{-m+1}(0,1)$ and $P_{-m+1}(1,1)$ equals $O_{-m+1}(1,1)$, and the other equalities follow from the definition of the Laplace transform. Hence, $O_{-m}(0,0)$ is on $L_{-m}$. Additionally, $O_m(0,0)$ is on $L_m$, since $O_m(0,0)$ equals $P_m(0,0)$. Therefore $O_{-m}(0,0)$ is conjugate to $O_{m}(0,0)$, which means that the conditions of Theorem~\ref{thm:Qnetinquadricconjugatelaplacepoints} are met and thereby $O(0,0)$ is contained in $\mathcal U$. Thus we obtain the same contradiction as in the end of the proof of Lemma~\ref{lemma:laplacesingularpointinline}. Hence,  $P_{-m}(0,0)$ equals $P_{-m}(0,1)$.
	
	``$\Leftarrow$'' As a consequence of Theorem~\ref{thm:Qnetinquadricconjugatelaplacepoints}, $P_m(0,0)$ and $P_m(0,1)$ are contained in the polar of the point $P_{-m}(0,0) = P_{-m}(0,1)$. By Lemma~\ref{lemma:nosingularpointslaplacesequence}, $P_{-m}(0,0)$ is not a singular point. Therefore, the polar of $P_{-m}(0,0)$ is a hyperplane, which necessarily contains $U_1 \cap U_2$, that is the codimension 2 space of singular points of $\mathcal U$. Since $P_m(0,0)$ and $P_{m}(0,1)$ span a line in the polar of $P_{-m}(0,0)$, this line has to intersect $U_1 \cap U_2$ in a point, which is a singular point.
\end{proof}

Note that the condition that $P_{-m}(0,0) = P_{-m}(0,1)$ in Lemma~\ref{lemma:laplacesingularpointinline} is equivalent to saying that $P_{-m}$ is Laplace degenerate.
We are now ready to prove Theorem~\ref{thm:BSandDKoenigslaplacedegunsymmetric}.
For improved readability, we split it into four propositions.

\begin{proposition} \label{prop:bskoenigslaplace}
	Let $P\colon \Sigma \to \RP^n$ be a BS-Kœnigs net. If $P_{m}$ is Laplace degenerate, then $P_{-(m+1)}$ is Laplace degenerate (assuming it exists). 
\end{proposition}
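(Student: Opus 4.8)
The plan is to reduce to a finite extensive patch and use that an extensive BS-Kœnigs net lies on the degenerate quadric $\mathcal U = U_1\cup U_2$ of \eqref{eq:uquadric}, so that the conjugacy statement of Theorem~\ref{thm:Qnetinquadricconjugatelaplacepoints} becomes available. By translation invariance it is enough to prove $P_{-m-1}(i,j)=P_{-m-1}(i,j+1)$ at a single base point, so I would restrict $P$ to $\Sigma_{m+1,m+2}$; there $P_m$ is defined on $\Sigma_{1,2}$ (so that its Laplace degeneracy is visible in the $i$-direction) and $P_{-m-1}$ is defined on $\Sigma_{0,1}$. After lifting (Lemma~\ref{lem:lift}) I may assume $P\colon\Sigma_{m+1,m+2}\to\RP^{2m+3}$ is extensive: a lift of a BS-Kœnigs net is a BS-Kœnigs net, the Laplace degeneracy of $P_m$ is preserved (Lemma~\ref{lem:Laplaceterminatemstepsalgebraic} and the remark after it detect it via the Laplace invariant, which is unchanged under lifting), the lift inherits the existence of $P_{-m-1}$, and a Laplace degenerate lift projects to a Laplace degenerate net. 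By Lemma~\ref{lem:koenigslift} the lift lies on $\mathcal U=U_1\cup U_2$ for two distinct hyperplanes $U_1,U_2\subset\RP^{2m+3}$ whose singular locus $W:=U_1\cap U_2$ has codimension $2$.

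Next, set $A_j:=P_m(0,j)=P_m(1,j)$ for $j\in\{0,1,2\}$ (the two points coincide because $P_m$ is Laplace degenerate) and $L_j:=P_{-m}(0,j)\vee P_{-m}(1,j)$; each $L_j$ is a genuine line since $\mathcal L_-P_{-m}=P_{-m-1}$ is assumed to exist, so $P_{-m}$ is a non-degenerate Q-net. For $i\in\{0,1\}$ apply Theorem~\ref{thm:Qnetinquadricconjugatelaplacepoints} with $\mathcal Q=\mathcal U$ to the $\Sigma_{m,m}$-subpatch of $P$ based at $(i,j)$, all of whose points lie on $\mathcal U$: its iterated Laplace points are $P_m(i,j)=A_j$ and $P_{-m}(i,j)$, hence $A_j$ is conjugate to $P_{-m}(i,j)$, and therefore to the whole line $L_j$, i.e.\ $L_j\subseteq A_j^{\perp}$. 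The same subpatch, viewed in its span $\RP^{2m}$ with the restricted degenerate quadric (singular locus $W\cap\RP^{2m}$), is an extensive BS-Kœnigs net, so Lemma~\ref{lemma:nosingularpointslaplacesequence} applies and tells us that $A_j$ and $P_{-m}(i,j)$ are not singular points of $\mathcal U$. In particular $A_j^{\perp}$ is a hyperplane, it automatically contains $W$, and $L_j\not\subseteq W$.

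The conclusion is then a short incidence argument. Since $P_{-m-1}(0,0)=L_0\cap L_1$ lies in $A_0^{\perp}\cap A_1^{\perp}$, and $A_0^{\perp},A_1^{\perp}$ are two distinct hyperplanes each containing the codimension-$2$ subspace $W$, we get $A_0^{\perp}\cap A_1^{\perp}=W$, hence $P_{-m-1}(0,0)\in W$. Likewise $P_{-m-1}(0,1)=L_1\cap L_2\in W$. Both of these points lie on $L_1$, and $W\cap L_1$ is a single point: it is nonempty (it contains $P_{-m-1}(0,0)$) and it is not all of $L_1$ (we noted $L_1\not\subseteq W$). Therefore $P_{-m-1}(0,0)=P_{-m-1}(0,1)$, and running the same reasoning at every base point shows that $P_{-m-1}(i,j)$ is independent of $j$, i.e.\ $P_{-m-1}$ is Laplace degenerate. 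As a remark, the auxiliary fact that each $L_j$ meets $W$ can also be obtained from Lemma~\ref{lemma:laplacesingularpointinline}, applied after interchanging the two lattice directions to the $\Sigma_{m+1,m}$-subpatches, on which $P_m$ restricts to a pair of coinciding points; the argument above reaches it as a by-product.

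The main obstacle I anticipate is the genericity bookkeeping. One must justify that $A_0^{\perp}\neq A_1^{\perp}$ (equivalently, that $A_0$ and $A_1$ are not positioned in the same way relative to $\{U_1,U_2\}$), that the quadrics induced on the various subpatches really are the restrictions of $\mathcal U$ to their spans, and that an extensive lift can be chosen even though $P_m$ is already degenerate (this last point is unproblematic, since the lift is built from the non-degenerate net $P$, not from $P_m$). These are exactly the issues handled in the proofs of Lemmas~\ref{lemma:nosingularpointslaplacesequence} and \ref{lemma:laplacesingularpointinline} by replacing $P$ with a nearby non-degenerate Q-net $O$ that agrees with $P$ outside a single corner vertex, and I would carry over the same perturbation device here.
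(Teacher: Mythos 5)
Your argument is essentially the paper's own proof: reduce to an extensive lift on a $\Sigma_{m+1,m+2}$ patch lying on the degenerate quadric $\mathcal U=U_1\cup U_2$, use Theorem~\ref{thm:Qnetinquadricconjugatelaplacepoints} and Lemma~\ref{lemma:nosingularpointslaplacesequence} to place each line $L_j$ inside the polar hyperplane of the non-singular point $A_j$, and intersect two distinct such polars in the singular locus $W$ to force $P_{-m-1}(0,0)=P_{-m-1}(0,1)$ as the unique point of $W\cap L_1$. The one step you leave as an anticipated obstacle, $A_0^\perp\neq A_1^\perp$, is exactly what the paper closes by applying Lemma~\ref{lemma:laplacesingularpointinline} in the forward direction: since $P_{-m}$ is nowhere Laplace degenerate (as $P_{-m-1}$ is assumed to exist), the line $A_0\vee A_1$ contains no singular point, whereas coinciding polars would place one on it (the lines $A_j\vee A_{j+1}$ being genuine lines thanks to Lemma~\ref{lem:liftnotgoursat} for the extensive lift).
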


\proof{
	To show that $P_{-m-1}$ is Laplace degenerate, it suffices to consider a finite patch of $P$ with domain $\Sigma_{m+1,m+2}$ since we only have to show that $P_{-m-1}(i,j)$ coincides with $P_{-m-1}(i,j+1)$ for all $i,j$. Moreover, it suffices to consider the case that $P$ is extensive. If $P$ is not extensive we may instead consider the lift of $P$ as in Lemma~\ref{lem:lift}. It follows from Lemma~\ref{lemma:laplacesingularpointinline} that the lines
	\begin{align}\label{eq:linesoflifts}
		P_m(0) \vee P_m(1), \quad P_m(1) \vee P_m(2),
	\end{align}	
	do not contain a singular point, since $P_{-m}$ is nowhere Laplace degenerate. Consequently, the polar spaces $P_m^\perp(0)$, $P_m^\perp(1)$ do not coincide, nor do the polar spaces $P_m^\perp(1)$ and $P_m^\perp(2)$ coincide. Due to Theorem~\ref{thm:Qnetinquadricconjugatelaplacepoints} the point $P_{-m}(0,j)$ and the point $P_{-m}(1,j)$ are in $P_m^\perp(j)$ for each $j \in \{0,1,2\}$. Hence the line 
	\begin{align}
		\ell(j) := P_{-m}(0,j) \vee P_{-m}(1,j)
	\end{align}
	is also in $P_m^\perp(j)$ for each $j$. Moreover, the intersection of $\ell(j)$ and $\ell(j+1)$ is contained in the intersection of $P_m^\perp(j)$ and $P_m^\perp(j+1)$ which is the locus of singular points of $\mathcal U$. Thus, the intersection of $\ell(j)$ and $\ell(j+1)$ is a singular point. Now, the definition of the Laplace transform implies that
	\begin{align}
		P_{-m-1}(0,j) = \ell(j) \cap \ell(j+1).
	\end{align}
	Therefore the points of $P_{-m-1}$ are singular. If the point $P_{-m-1}(0,0)$ were different from $P_{-m-1}(0,1)$, this would imply that there are multiple singular points on the line $\ell(1)$, which would therefore be an isotropic line. But then that would imply that $P_{-m}(0,1)$ and $P_{-m}(1,1)$ (both on $\ell(1)$) are singular, which is not possible due to Lemma~\ref{lemma:nosingularpointslaplacesequence}. Hence the points $P_{-m-1}(0,0)$ and $P_{-m-1}(0,1)$ coincide, which shows that $P_{-m-1}$ is Laplace degenerate. \qed
}

\begin{remark}
    Proposition~\ref{prop:bskoenigslaplace} does not exclude the possibility that $P_m$ is degenerate of mixed type (see Remark~\ref{rem:mixedtype}). Therefore, let us note that the proof is valid even in this case. Indeed, since we are working with an (extensive) lift in the proof of Proposition~\ref{prop:bskoenigslaplace}, Lemma~\ref{lem:liftnotgoursat} ensures that the lines in Equation~\eqref{eq:linesoflifts} are well-defined.
\end{remark}

We now turn to finishing the proof of Theorem~\ref{thm:BSandDKoenigslaplacedegunsymmetric}.

\begin{proposition}\label{prop:BSGoursatpositiveandLaplacenegative}
	Let $P\colon \Sigma \to \RP^n$ be a BS-Kœnigs net. If $P_{m}$ is Goursat degenerate, then $P_{-m-2}$ is Laplace degenerate (assuming it exists).
\end{proposition}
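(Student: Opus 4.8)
The plan is to reduce the Goursat case to the already-proved Laplace case of Proposition~\ref{prop:bskoenigslaplace} by passing to a lift. Concretely, suppose $P$ is a BS-Kœnigs net with $P_m$ Goursat degenerate. As usual it suffices to work on a finite patch, say $\Sigma_{m+2,m+3}$, and we may assume $P$ is extensive (otherwise replace $P$ by a lift as in Lemma~\ref{lem:lift}; recall that a lift of a BS-Kœnigs net is again a BS-Kœnigs net, and that $\hat P_i$ is a lift of $P_i$). Now apply Lemma~\ref{lem:goursatliftstolaplace} to the Q-net $P_{m-1}$, whose forward Laplace transform $P_m$ is Goursat degenerate: any lift of $P_{m-1}$ has its second Laplace transform Laplace degenerate. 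But since $P$ is already extensive, $P_m$ sits inside the already-chosen ambient space and I instead want to lift the whole net $P$ so that the induced lift of $P_{m-1}$ is genuinely a lift in the sense of Lemma~\ref{lem:goursatliftstolaplace}; the cleanest route is to observe that, because $P_m$ is Goursat degenerate, the net $P$ itself cannot be $\Sigma_{\bullet,m}$-more-than-extensive (Lemma~\ref{lem:Goursatterminatemsteps} says the vertical parameter spaces $\pav P(i)$ are exactly $m$-dimensional when $P$ spans maximally), so the forward Laplace transform $P_m$ genuinely degenerates one dimension "too late", and a lift of $P$ to one higher dimension makes $\hat P_{m+1}$ Laplace degenerate.

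The key step, then, is: \emph{the lift $\hat P$ of $P$ satisfies $\hat P_{m+1}$ is Laplace degenerate}. This is exactly Lemma~\ref{lem:goursatliftstolaplace} applied in the forward direction to $P_{m-1}$ (whose image spans a space of dimension $2$ less than maximal, since its forward transform Goursat-degenerates to a curve), combined with the observation that $\hat P_{m-1}$ is a lift of $P_{m-1}$ so $\hat P_{m+1} = (\hat P_{m-1})_2$ is Laplace degenerate; also Lemma~\ref{lem:liftnotgoursat} guarantees $\hat P_{m+1}$ is Laplace degenerate and \emph{not} of mixed type, so the hypotheses of Proposition~\ref{prop:bskoenigslaplace} are met. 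Having established that $\hat P_{m+1}$ is Laplace degenerate, Proposition~\ref{prop:bskoenigslaplace} (applied to the BS-Kœnigs net $\hat P$, with $m$ there equal to $m+1$ here) yields that $\hat P_{-(m+1)-1} = \hat P_{-m-2}$ is Laplace degenerate, provided it exists. Finally, $\hat P_{-m-2}$ is a lift of $P_{-m-2}$, and by Lemma~\ref{lem:Laplaceterminatemstepsalgebraic} Laplace degeneracy is detected by the Laplace invariant $K_{-m-2}=1$ (equivalently $H_{-m-3}=1$), which is unchanged under lifting — hence $P_{-m-2}$ is itself Laplace degenerate (assuming it exists, which is the stated genericity hypothesis).

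The main obstacle I anticipate is bookkeeping around which net gets lifted and making the ``assuming it exists'' hypothesis interact correctly with the lift: one must be careful that the lift $\hat P_{-m-2}$ exists whenever $P_{-m-2}$ does, so that Proposition~\ref{prop:bskoenigslaplace} is applicable to $\hat P$. This is fine because the Laplace transforms of the lift project onto those of the original net (the remark after Lemma~\ref{lem:lift}), so existence downstairs is implied by existence upstairs and, conversely, if $P_{-m-2}$ exists then the relevant lines defining $\hat P_{-m-2}$ are non-degenerate by the same lifting argument — but this does need to be said explicitly rather than waved through. A secondary subtlety is invoking Lemma~\ref{lem:liftnotgoursat} to rule out mixed-type degeneracy of $\hat P_{m+1}$ so that Proposition~\ref{prop:bskoenigslaplace} really applies; the remark following Proposition~\ref{prop:bskoenigslaplace} already addresses exactly this point, so it can simply be cited.

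\begin{proof}
It suffices to prove the claim on a finite patch, so assume $P \colon \Sigma_{m+2,m+3} \to \RP^n$, and without loss of generality assume $P$ is extensive (else replace $P$ by a lift as in Lemma~\ref{lem:lift}, which is again a BS-Kœnigs net, and note that $P_{-m-2}$ exists if and only if its lift does, since Laplace transforms of a lift project onto those of the original net). Let $\hat P$ be a lift of $P$ to one higher dimension. By Lemma~\ref{lem:goursatliftstolaplace} applied to the Q-net $P_{m-1}$, whose forward Laplace transform $P_m$ is Goursat degenerate, the lift $\hat P_{m-1}$ of $P_{m-1}$ satisfies that $\hat P_{m+1} = (\hat P_{m-1})_2$ is Laplace degenerate. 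Moreover, by Lemma~\ref{lem:liftnotgoursat} the Q-net $\hat P_{m+1}$ is not of mixed type. Since $\hat P$ is a BS-Kœnigs net with $\hat P_{m+1}$ Laplace degenerate, Proposition~\ref{prop:bskoenigslaplace} (with $m$ there replaced by $m+1$, and using the remark following that proposition to handle the lift) yields that $\hat P_{-m-2}$ is Laplace degenerate, assuming it exists. By Lemma~\ref{lem:Laplaceterminatemstepsalgebraic}, this is equivalent to the Laplace invariant being $1$ on the relevant edges, a condition unchanged under lifting. Therefore $P_{-m-2}$ is Laplace degenerate, assuming it exists.
\end{proof}
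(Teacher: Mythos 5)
Your proof is correct and follows essentially the same route as the paper: pass to a lift $\hat P$, use Lemma~\ref{lem:goursatliftstolaplace} to convert the Goursat degeneracy of $P_m$ into Laplace degeneracy of $\hat P_{m+1}$, apply Proposition~\ref{prop:bskoenigslaplace} to $\hat P$, and project back down. One small internal inconsistency to fix: you cannot ``assume without loss of generality that $P$ is extensive'' here, since Goursat degeneracy of $P_m$ forces the parameter spaces $\pav P(i)$ to be only $m$-dimensional (Lemma~\ref{lem:Goursatterminatemsteps}), which is incompatible with extensivity on a $\Sigma_{m+2,m+3}$ patch — but the argument never actually uses that assumption (only the extensivity of the lift $\hat P$), so simply delete that clause.
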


\proof{
	It suffices to consider $P$ defined on a patch of size $\Sigma_{m+2,m+3}$. Then, $P_{-m-2}$ consists of two points which we show coincide. Consider a lift $\hat P$ of $P$. Due to Lemma~\ref{lem:goursatliftstolaplace}, $\hat P_{m+1}$ is Laplace degenerate. Hence, Proposition~\ref{prop:bskoenigslaplace} implies that $\hat P_{-m-2}$ is Laplace degenerate. Consequently, $P_{-m-2}$ is also Laplace degenerate.\qed
}

\begin{proposition} \label{prop:dkoenigslaplace}
	Let $D\colon \Sigma \to \RP^n$ be a D-Kœnigs net. If $D_{m}$ is Laplace degenerate, then $D_{-m-1}$ is Laplace degenerate (assuming it exists). 
\end{proposition}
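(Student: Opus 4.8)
\emph{The plan.} I would reduce the D-Kœnigs case to the BS-Kœnigs case already settled in Proposition~\ref{prop:bskoenigslaplace}, using the diagonal intersection net as the bridge. As in that proof it is enough to argue locally: restrict $D$ to the patch $\Sigma_{m+1,m+2}$, so that $D_{-m-1}$ consists of the two points $D_{-m-1}(0,0)$ and $D_{-m-1}(0,1)$, and show that these coincide. The first step is to realize this patch of $D$ as the diagonal intersection net of a non-degenerate BS-Kœnigs net $P\colon \Sigma_{m+2,m+3}\to\RP^{N}$ (for a suitable $N$) — this is the converse of Lemma~\ref{lem:BSKoenigsandDiagonalIntersectionNetisDKoenigs}; such a $P$ can be built from boundary data, with the remaining free choices made generically so that $P$ is non-degenerate (and hence, if needed, admits an extensive lift by Lemma~\ref{lem:lift}).

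\emph{The chain of implications.} With $P$ fixed, Proposition~\ref{prop:bsdlaplace} applied with index $-m$ shows that $P_{-m}$ is Laplace degenerate, since $D_m$ is Laplace degenerate by hypothesis. Next I would use that the transpose $(i,j)\mapsto P(j,i)$ of a BS-Kœnigs net is again a BS-Kœnigs net: the cross-ratio relation in Definition~\ref{defn:BSKoenigs} is invariant under the $90^{\circ}$ rotation, which exchanges $\mathcal{L}_{+}$ with $\mathcal{L}_{-}$ and $H$ with $K$. (Equivalently, the proof of Proposition~\ref{prop:bskoenigslaplace} already treats the two Laplace directions symmetrically, relying only on Theorem~\ref{thm:Qnetinquadricconjugatelaplacepoints} and Lemmas~\ref{lemma:nosingularpointslaplacesequence} and~\ref{lemma:laplacesingularpointinline}.) Hence Proposition~\ref{prop:bskoenigslaplace} applies in its ``backwards implies forwards'' form: $P_{-m}$ Laplace degenerate implies $P_{m+1}$ Laplace degenerate. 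Finally, Proposition~\ref{prop:bsdlaplace} applied with index $m+1$ turns this into the statement that $D_{-m-1}$ is Laplace degenerate, which is the claim.

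\emph{The main obstacle.} The delicate point is the realization step, and in particular ensuring that $P_{-m}$ and $P_{m+1}$ are well-defined. Existence of $P_{-m}$ is forced by $D_m$ being Laplace degenerate (Proposition~\ref{prop:bsdlaplace}). For $P_{m+1}$ one uses that the ``forced'' Laplace and Goursat degenerations of $P$ correspond, via Propositions~\ref{prop:bsdlaplace} and~\ref{prop:bsdgoursat}, to degenerations of $D$: if $P_k$ were Laplace or Goursat degenerate for some $k\le m-1$, or if $P_m$ were Laplace degenerate, then $D_{-m-1}$ would fail to exist, contrary to hypothesis. The only remaining possibility, that $P_m$ is Goursat degenerate, is harmless — in that case Proposition~\ref{prop:bsdgoursat} immediately gives that $D_{-m-1}$ is Laplace degenerate, so we are done. (Should the realization argument prove inconvenient, an alternative is to repeat the quadric-lift argument of Proposition~\ref{prop:bskoenigslaplace} directly for D-Kœnigs nets, or to read the result off geometrically from Theorem~\ref{thm:BSandDiagonoalIntersectionNetcompatible} together with the symmetry results of Section~\ref{sec:laplaceseq}.)
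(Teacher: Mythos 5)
Your proposal is correct and follows essentially the same route as the paper: realize $D$ as the diagonal intersection net of a BS-Kœnigs net $P$ (the paper cites a reference for this realization step), then chain Proposition~\ref{prop:bsdlaplace} ($D_m$ Laplace degenerate $\Rightarrow P_{-m}$ Laplace degenerate), the reversed form of Proposition~\ref{prop:bskoenigslaplace} ($\Rightarrow P_{m+1}$ Laplace degenerate), and Proposition~\ref{prop:bsdlaplace} again ($\Rightarrow D_{-m-1}$ Laplace degenerate). Your extra care about the existence of the intermediate transforms and about the direction-symmetry of Proposition~\ref{prop:bskoenigslaplace} is sound but not a different argument.
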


\proof{
	It was shown in \cite{Steinmeier2018} that there is a (non-unique) BS-Kœnigs net $P$ such that $D$ is the diagonal intersection net of $P$. Since $D_{m}$ is Laplace degenerate, Theorem~\ref{thm:symmetryLaplacedegenerateBSandD} implies that $P_{-m}$ is Laplace degenerate as well. Subsequently Proposition~\ref{prop:bskoenigslaplace} implies that $P_{m+1}$ is Laplace degenerate. Finally, we use Theorem~\ref{thm:symmetryLaplacedegenerateBSandD} again which implies that $D_{-m-1}$ is Laplace degenerate.\qed
}

\begin{proposition} \label{prop:dkoenigsgoursat}
	Let $D\colon \Sigma \to \RP^n$ be a D-Kœnigs net. If $D_{m}$ is Goursat degenerate, then $D_{-m-2}$ is Laplace degenerate (assuming it exists).
\end{proposition}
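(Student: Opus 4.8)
The plan is to reduce the Goursat case to the Laplace case already handled in Proposition~\ref{prop:dkoenigslaplace}, in exactly the same way that Proposition~\ref{prop:BSGoursatpositiveandLaplacenegative} was deduced from Proposition~\ref{prop:bskoenigslaplace}. The mechanism is Lemma~\ref{lem:goursatliftstolaplace}: after passing to a lift, a Goursat degeneration becomes a Laplace degeneration one step later; and lifting preserves the Laplace invariants, hence the defining identity~\eqref{eq:Dcrossratioequ}, so a lift of a D-Kœnigs net is again a D-Kœnigs net.

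Concretely, I would first observe that it suffices to consider $D$ defined on the finite patch $\Sigma_{m+2,m+3}$, so that $D_{-m-2}$ consists of the two points $D_{-m-2}(0,0)$ and $D_{-m-2}(0,1)$, which we must show coincide. Next I would pass to a lift $\hat D$ of $D$ as in Lemma~\ref{lem:lift}; this $\hat D$ is an extensive D-Kœnigs net. Since $D_m=(D_{m-1})_1$ is Goursat degenerate and $\hat D_{m-1}$ is a lift of the non-degenerate Q-net $D_{m-1}$, Lemma~\ref{lem:goursatliftstolaplace} applied to $D_{m-1}$ shows that $\hat D_{m+1}=(\hat D_{m-1})_2$ is Laplace degenerate; the forward step producing $\hat D_{m+1}$ is legitimate because a lift of a Goursat degenerate Laplace transform is non-degenerate (the lemma immediately preceding Lemma~\ref{lem:goursatliftstolaplace}). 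Now $\hat D$ is a D-Kœnigs net whose $(m+1)$-st forward Laplace transform is Laplace degenerate, so Proposition~\ref{prop:dkoenigslaplace}, applied with $m$ replaced by $m+1$, yields that $\hat D_{-m-2}$ is Laplace degenerate (assuming it exists). Finally, $\hat D_{-m-2}$ projects onto $D_{-m-2}$, and Laplace degeneracy is preserved by central projection, so $D_{-m-2}$ is Laplace degenerate.

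An essentially equivalent route, if one prefers to keep everything in terms of the associated BS-Kœnigs net, is to invoke the BS-Kœnigs net $P$ whose diagonal intersection net is $D$ \cite{Steinmeier2018}: passing to a lift $\hat P$, whose diagonal intersection net $\hat D$ is a lift of $D$, Goursat degeneracy of $D_m$ forces $\hat D_{m+1}$ to be Laplace degenerate; then Proposition~\ref{prop:bsdlaplace} gives $\hat P_{-m-1}$ Laplace degenerate, Proposition~\ref{prop:bskoenigslaplace} read in the backward direction gives $\hat P_{m+2}$ Laplace degenerate, and Proposition~\ref{prop:bsdlaplace} once more gives $\hat D_{-m-2}$ Laplace degenerate, whence $D_{-m-2}$ is Laplace degenerate. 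I expect the only delicate point in either route to be the bookkeeping of existence and extensivity -- one must choose the lift so that $\hat D_{-m-2}$ (respectively $\hat P_{m+2}$) exists whenever the projected object does -- which is handled as in the proofs of Lemma~\ref{lemma:nosingularpointslaplacesequence} and Proposition~\ref{prop:BSGoursatpositiveandLaplacenegative} using that the relevant non-degeneracy conditions are open. No geometric input beyond the results already assembled in this section is needed; I would present the first route.
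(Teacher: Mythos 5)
Your first route is exactly the paper's proof: pass to a lift $\hat D$, use Lemma~\ref{lem:goursatliftstolaplace} to convert the Goursat degeneration of $D_m$ into a Laplace degeneration of $\hat D_{m+1}$, apply Proposition~\ref{prop:dkoenigslaplace} with $m$ replaced by $m+1$, and project back down. The argument is correct and essentially identical to the one in the paper (which states it more tersely), so nothing further is needed.
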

\proof{
	We follow the usual arguments. There is a lift $\hat D$ of $D$ such that $\hat D_{m+1}$ is Laplace degenerate due to Lemma~\ref{lem:goursatliftstolaplace}, and then the claim follows from Proposition~\ref{prop:dkoenigslaplace}.\qed
}

All combined, Propositions~\ref{prop:bskoenigslaplace} - \ref{prop:dkoenigsgoursat} provide a proof of one of our main theorems, namely Theorem~\ref{thm:BSandDKoenigslaplacedegunsymmetric}.

Recall that we have already discussed right after Theorem~\ref{thm:BSandDKoenigslaplacedegunsymmetric} what may appear as a paradox here. On a more technical level, note that we have shown in Lemma~\ref{lemma:nosingularpointslaplacesequence} that the points of $P_m$ are \emph{not} singular with respect to $\mathcal U$. On the other hand, in the proof of Proposition~\ref{prop:bskoenigslaplace} we see that the points of $P_{-(m+1)}$ \emph{are} singular. Thus, there really is a qualitative difference between $P_m$ and $P_{-(m+1)}$.

We also need to make sure that Proposition~\ref{prop:bskoenigslaplace} is not a void statement, which it would be if $P_{-(m+1)}$ never exists. In the next section we show how to construct such nets, and we show that restrictive conditions are needed so that $P_{-m}$ is Laplace degenerate (instead of $P_{-(m+1)}$).

\section{Symmetrically terminating Laplace sequences} \label{sec:symterminating}

In this section, we show how to construct Kœnigs nets such that both $P_m$ \emph{and} $P_{-m}$ are Laplace degenerate. First, with Lemma~\ref{lem:Koenigslaplacedegsymmetric} we provide a fundamental lemma which couples forwards and backwards Laplace degeneracy. This lemma is a generalization of the corresponding lemma in \cite{KMT2023conenets} which established the $m=1$ case that is shown in Figure~\ref{fig:koenigsdoublelaplace}. Then we show how to construct BS-Kœnigs net from initial data using Lemma~\ref{lemma:BSKoenigsline}. This lemma provides the basis for Lemma~\ref{lem:BSKoenigsPmLaplacedeg} which shows how to construct BS-Kœnigs nets from initial data such that one direction is Laplace degenerate after $m$ steps. Subsequently, we combine Lemma~\ref{lem:Koenigslaplacedegsymmetric} and  Lemma~\ref{lem:BSKoenigsPmLaplacedeg} to obtain Lemma~\ref{lem:BSKoenigsDoubleLaplacedeg}, which enables us to show Theorem~\ref{th:bskoenigsdoublelaplace}, which states how to construct BS-Kœnigs nets from initial data such that \emph{both} directions are Laplace degenerate after $m$ steps.

\begin{figure}[tb] 
	\centering
	\includegraphics[width=0.6\textwidth]{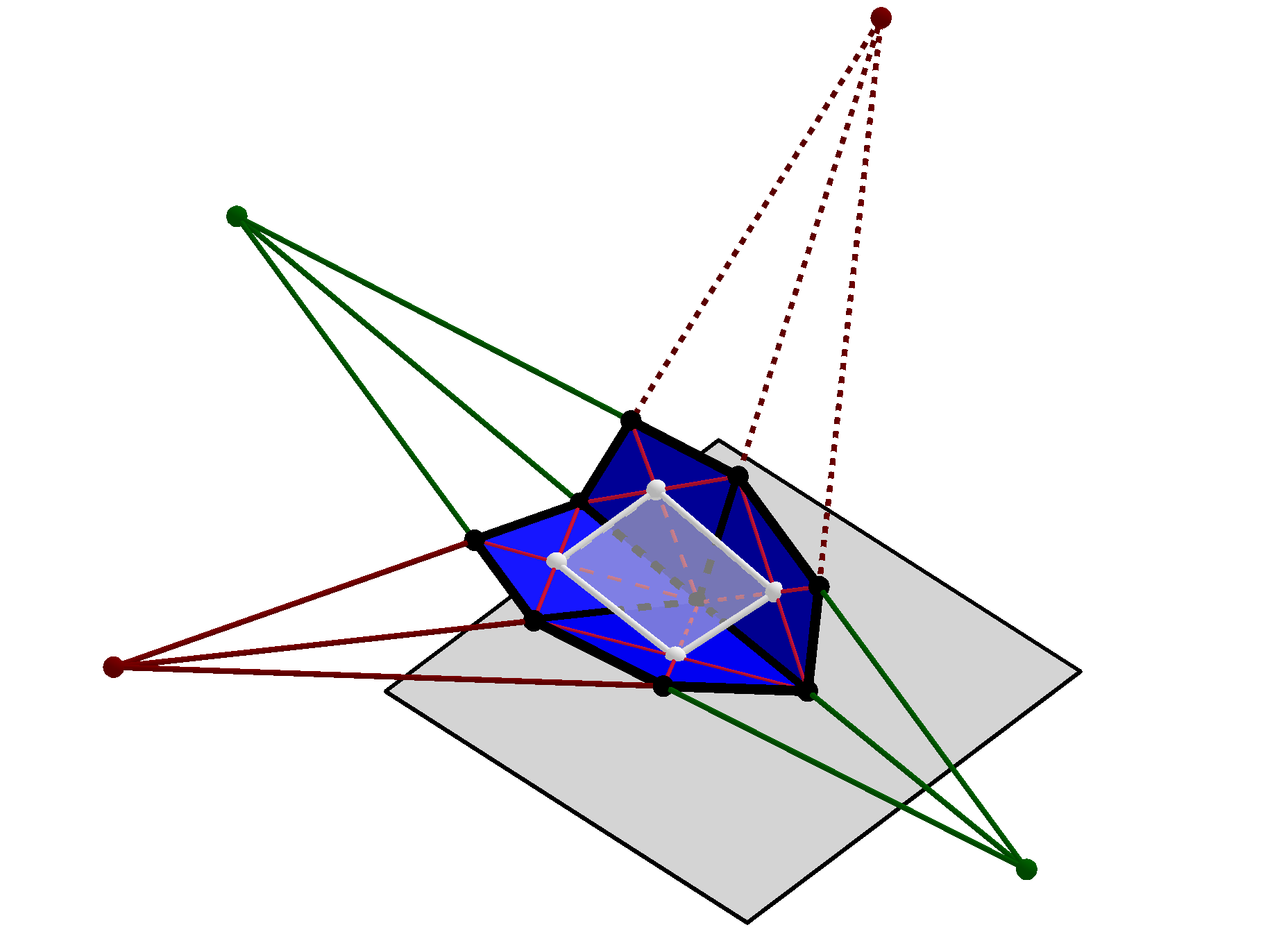}
	\caption{A BS-Kœnigs net $P$ (blue) with Laplace degenerate $P_1$ (green) and Laplace degenerate $P_{-1}$ (brown). We see that $P$ is a BS-Kœnigs net because the diagonal intersection points are in a plane (white). By Lemma~\ref{lem:Koenigslaplacedegsymmetric}, the three triples of concurrent solid lines imply the concurrency of the three dashed lines.}
	\label{fig:koenigsdoublelaplace}
\end{figure}

\begin{lemma}\label{lem:Koenigslaplacedegsymmetric}
	Let $P\colon \Sigma \to \RP^n$ be a BS-Kœnigs net such that $P_m$ is Laplace degenerate. If $P_{-m}(0, j)$ is independent of $j$, then $P_{-m}(i,j)$ is independent of $j$ for all $i$.
\end{lemma}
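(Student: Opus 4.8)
We want to show that if $P_m$ is Laplace degenerate and the curve $P_{-m}(0,\cdot)$ is constant, then every $P_{-m}(i,\cdot)$ is constant. The natural strategy is to work with an extensive lift $\hat P$ of $P$ (using Lemma~\ref{lem:lift}) and the degenerate quadric $\mathcal U = U_1 \cup U_2$ from Lemma~\ref{lem:koenigslift}, exactly as in the proof of Proposition~\ref{prop:bskoenigslaplace}. Since the conclusion is a statement about coincidences of points $P_{-m}(i,j) = P_{-m}(i,j+1)$, and lifts of lifts commute with Laplace transforms, it suffices to prove the claim for the extensive lift and then project down. So I fix attention on an extensive BS-K\oe nigs net $\hat P$ defined on a finite patch large enough to see the relevant points.

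\textbf{Key steps.} First I would record what Laplace degeneracy of $\hat P_m$ means via Theorem~\ref{thm:Qnetinquadricconjugatelaplacepoints} together with Lemma~\ref{lemma:nosingularpointslaplacesequence}: each point $\hat P_m(j)$ (constant in the $i$-direction) is non-singular, and for every $i,j$ the point $\hat P_{-m}(i,j)$ lies in the polar hyperplane $\hat P_m(j)^\perp$. Thus the whole parameter space $\pav{\hat P_{-m}}(\,\cdot\,)$ at a fixed second coordinate $j$ — spanned by the $\hat P_{-m}(i,j)$ over all $i$ — is contained in $\hat P_m(j)^\perp$. Second, the hypothesis says $\hat P_{-m}(0,j)$ is independent of $j$; call this common point $S$. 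Then $S \in \hat P_m(j)^\perp$ for all $j$, so $S$ is conjugate to $\hat P_m(j)$ for every $j$; equivalently $\hat P_m(j) \in S^\perp$ for all $j$. Third — and this is the crux — I want to deduce that $S$ is a singular point of $\mathcal U$. The reason: the points $\hat P_m(j)$, for $j$ ranging over $m+1$ consecutive values, together with the structure of the Laplace sequence, should span enough of $\RP^{2m}$ (after passing to a suitably large patch, say $\Sigma_{m+1,m+1}$ or one step larger) that the hyperplane $S^\perp$ containing all of them forces $S^\perp = \RP^{2m}$, i.e. $S$ is singular. Concretely, $\hat P_m$ Laplace degenerate means $\hat P_m$ is a discrete curve joining $\pav{\hat P}(k)$-type intersections; by extensivity this curve spans a complement of $U_1\cap U_2$, or at least spans enough dimensions, so that a hyperplane through all of it must be all of $\RP^{2m}$. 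Once $S$ is singular: $S$ lies in $U_1 \cap U_2$, and $S$ is conjugate to everything. Fourth, for each fixed $j$ the parameter space $\pav{\hat P_{-m}}(\,\cdot\,)$ lies in $\hat P_m(j)^\perp$, which is a genuine hyperplane (since $\hat P_m(j)$ is non-singular), and it contains the singular point $S = \hat P_{-m}(0,j)$. Now I would argue by a dimension/extensivity count that this parameter space is actually just the point $S$: an extensive net cannot have its $(-m)$-th backward Laplace transform spanning more than $0$ dimensions in a parameter line once one point of that line is pinned to a singular point shared across all $j$ — otherwise one recovers too-large a join, contradicting extensivity or contradicting that $\hat P_{-m}$ exists (i.e. $\hat P_{-m+1}$ was non-degenerate). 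This yields $\hat P_{-m}(i,j) = S$ for all $i,j$, hence in particular independence of $j$ for all $i$, and projecting down gives the claim for $P$.

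\textbf{Main obstacle.} The delicate point is the third step: showing that $S$ must be singular, i.e. that the hyperplane $S^\perp$ containing all the points $\hat P_m(j)$ is forced to be the whole space. This requires a careful dimension bookkeeping on how much of $\RP^{2m}$ the Laplace-degenerate curve $\hat P_m$ spans inside an extensive BS-K\oe nigs net, presumably using Lemma~\ref{lem:expllaplacetransform} (which identifies $\hat P_m$ with intersections of the parameter planes $\pav{\hat P}(k)$) and the hyperplane structure $U_1, U_2$ of Lemma~\ref{lem:koenigslift}. A clean alternative, which may be easier and which I would try first, is to avoid singularity entirely: argue directly that for each $j$ the line $\hat P_m(j)^\perp \cap \hat P_m(j+1)^\perp$ (a codimension-2 space, equal to $U_1 \cap U_2$ when those polars are distinct, as they are by non-singularity) contains $\hat P_{-m-1}(0,j)$-type data, mimic the concurrency argument of Proposition~\ref{prop:bskoenigslaplace} applied to \emph{all} translates in the $i$-direction simultaneously, and conclude that the entire degenerate backward transform collapses to the fixed locus $U_1\cap U_2$ and then to the single point $S$. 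Either way, the engine is Theorem~\ref{thm:Qnetinquadricconjugatelaplacepoints} plus Lemma~\ref{lemma:nosingularpointslaplacesequence}, and the work is purely the incidence-geometry of polars; no new cross-ratio computation should be needed.
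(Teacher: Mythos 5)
There is a genuine gap: the step you yourself flag as the crux is not just unproven but false. You want to deduce that the common point $S := \hat P_{-m}(0,j)$ is a \emph{singular} point of $\mathcal U$. But Lemma~\ref{lemma:nosingularpointslaplacesequence} --- which you invoke earlier in the same argument --- says precisely that whenever $P_m$ and $P_{-m}$ both exist, the point $P_{-m}(0,0)$ is \emph{not} singular. So no amount of dimension bookkeeping will establish your third step; knowing that the two points $\hat P_m(0), \hat P_m(1)$ lie in the hyperplane $S^\perp$ cannot force $S^\perp=\RP^{2m}$, and indeed it does not. The singular point that actually controls this situation is not $S$ but a point \emph{on the line} $P_m(0,0)\vee P_m(0,1)$, lying in $U_1\cap U_2$; that equivalence ($P_{-m}(0,0)=P_{-m}(0,1)$ if and only if this line meets the singular locus) is exactly Lemma~\ref{lemma:laplacesingularpointinline}, which your proposal never uses and which is the whole engine of the paper's proof. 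Your final conclusion is also too strong: you end with $\hat P_{-m}(i,j)=S$ for all $i,j$, i.e.\ that $P_{-m}$ collapses to a single point. The lemma only asserts that $P_{-m}(i,\cdot)$ is constant for each $i$; the resulting discrete curve $(P_{-m}(i))_i$ is nonconstant in general (otherwise $P_{-m}$ would be degenerate of mixed type, which is neither claimed nor true generically).

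The intended argument is much shorter and propagates the hypothesis in the $i$-direction rather than trying to identify the limit point. Pass to an extensive lift on a $\Sigma_{m+1,m+1}$ patch, as you do. By the ``$\Leftarrow$'' direction of Lemma~\ref{lemma:laplacesingularpointinline}, the hypothesis $P_{-m}(0,0)=P_{-m}(0,1)$ forces the line $L_m := P_m(0,0)\vee P_m(0,1)$ to contain a singular point of $\mathcal U$. Since $P_m$ is Laplace degenerate, $P_m(i,j)$ is independent of $i$, so $L_m$ is \emph{the same line} as $P_m(1,0)\vee P_m(1,1)$. Applying the ``$\Rightarrow$'' direction of Lemma~\ref{lemma:laplacesingularpointinline} to the translated patch then gives $P_{-m}(1,0)=P_{-m}(1,1)$, and iterating in $i$ (and in $j$ on larger patches) finishes the proof. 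Theorem~\ref{thm:Qnetinquadricconjugatelaplacepoints} and the polar hyperplanes do enter, but only inside the proof of Lemma~\ref{lemma:laplacesingularpointinline}; at the level of this lemma the correct object to track is the singular point on the line joining consecutive points of $P_m$, not the alleged singularity of $S$.
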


\begin{proof}
	It suffices to prove the theorem for a $\Sigma_{m+1,m+1}$ patch. Moreover, we may assume that $P$ is extensive, otherwise we consider a lift of $P$. Consider the line
	\begin{align}
		L_m := P_m(0,0) \vee P_{m}(0,1).
	\end{align}
	Since the theorem assumes that $P_{-m}(0,0)$ equals $P_{-m}(0,1)$, Lemma~\ref{lemma:laplacesingularpointinline} implies that $L_m$ contains a singular point. On the other hand, since $P_m$ is Laplace degenerate we observe that
	\begin{align}
		L_m = P_m(1,0) \vee P_{m}(1,1).
	\end{align}
	Thus, by applying Lemma~\ref{lemma:laplacesingularpointinline} in the other direction, we conclude that $P_{-m}(1,0)$ equals $P_{-m}(1,1)$. Iterating this argument proves the claim.		
\end{proof}

\begin{lemma}\label{lemma:BSKoenigsline}
	Consider a  BS-Kœnigs net $P\colon \Sigma_{2,2} \to \RP^n$ and a non-degenerate Q-net $P'\colon \Sigma_{2,2} \to \RP^n$ that agrees with $P$ everywhere except on $(2,2)$. Then $P'$ is a BS-Kœnigs net if and only if $P'(2,2)$ is on the line joining $P(1,1)$ and $P(2,2)$.
\end{lemma}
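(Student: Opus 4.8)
The plan is to reduce the defining equation of a BS-Kœnigs net on the patch $\Sigma_{2,2}$ to a single scalar equation and study how it depends on the one vertex in which $P$ and $P'$ may differ. On $\Sigma_{2,2}$ the only index $(i,j)$ for which all four Laplace invariants occurring in \eqref{eq:BScrossratioequ} are defined is $(i,j)=(1,0)$; hence, since $P'$ is non-degenerate by hypothesis, $P'$ is a BS-Kœnigs net if and only if
\[
  H(1,0)\cdot H(1,1)=K(0,1)\cdot K(1,1).
\]
Because $P'$ agrees with $P$ away from $(2,2)$, the invariants $H(1,0)$ and $K(0,1)$ are the same for $P$ and $P'$; only $H(1,1)$, which depends on $P'_1(1,1)=(P(1,1)\vee P(1,2))\cap(P(2,1)\vee P'(2,2))$, and $K(1,1)$, which depends on $P'_{-1}(1,1)=(P(1,1)\vee P(2,1))\cap(P(1,2)\vee P'(2,2))$, are affected. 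Writing $q:=P'(2,2)$, which must lie in the plane $\Pi:=P(1,1)\vee P(2,1)\vee P(1,2)$ of the moving face, the assignments $q\mapsto P'_1(1,1)$ and $q\mapsto P'_{-1}(1,1)$ are the central projections of $\Pi$ onto the lines $P(1,1)\vee P(1,2)$ and $P(1,1)\vee P(2,1)$ with centres $P(2,1)$ and $P(1,2)$ respectively; in particular they are projective-linear, so after clearing denominators the displayed equation becomes $\mathcal E(q)=0$ for a quadratic form (a conic) $\mathcal E$ on $\Pi$.

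It remains to identify the zero set of $\mathcal E$ with the line $\ell_0:=P(1,1)\vee P(2,2)$. First, $\mathcal E\not\equiv 0$: otherwise $H(1,1)/K(1,1)$ would be constant on $\Pi$, but this ratio is constant along every line through $P(2,1)$ and along every line through $P(1,2)$, hence globally constant, contradicting that $P'_1(1,1)$ sweeps the whole line $P(1,1)\vee P(1,2)$ as $q$ varies. Second, $\mathcal E$ vanishes on the diagonal line $\ell_D:=P(2,1)\vee P(1,2)$: for $q\in\ell_D$ one finds $P'_1(1,1)=P(1,2)$ and $P'_{-1}(1,1)=P(2,1)$, which cause the denominators of $H(1,1)$ and of $K(1,1)$ to vanish respectively, and after clearing denominators each of the two terms of $\mathcal E$ picks up one of these vanishing factors. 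Third, $\mathcal E$ vanishes at $P(1,1)$ (there $P'_1(1,1)=P'_{-1}(1,1)=P(1,1)$, killing the numerators of both $H(1,1)$ and $K(1,1)$) and at $P(2,2)$ (there $P'=P$ is a BS-Kœnigs net). Since the diagonal point $D(1,1)=\ell_0\cap\ell_D$ lies on $\ell_D$, the conic $\mathcal E$ vanishes at the three collinear points $P(1,1),P(2,2),D(1,1)$ of $\ell_0$, so it contains $\ell_0$; being a nonzero conic containing the two distinct lines $\ell_0$ and $\ell_D$ (distinct because $P(1,1),P(2,1),P(2,2)$ are not collinear, by non-degeneracy), its zero set is exactly $\ell_0\cup\ell_D$. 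Consequently $P'$ is a BS-Kœnigs net if and only if $P'(2,2)\in\ell_0\cup\ell_D$; but $P'(2,2)\in\ell_D=P(2,1)\vee P(1,2)$ would make the three face-vertices $P'(2,2),P(2,1),P(1,2)$ collinear, impossible since $P'$ is non-degenerate, so $P'$ is a BS-Kœnigs net if and only if $P'(2,2)\in\ell_0$, as claimed. (For the converse direction one also notes that for non-degenerate $P'$ the four-point configurations defining $H(1,1),K(1,1)$ are themselves non-degenerate — e.g.\ $P'_1(1,1)\ne P(1,1)$ because $\ell_0\cap(P(1,1)\vee P(2,1))=P(1,1)$ while $P'(2,2)\ne P(1,1)$ — so that $\mathcal E(q)=0$ is genuinely equivalent to the cross-ratio equation.)

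The step I expect to be the main obstacle is the bookkeeping in the second paragraph: establishing $\mathcal E\not\equiv 0$ and pinning its zero locus to precisely $\ell_0\cup\ell_D$, together with checking that none of the relevant cross-ratios (including $P$'s own invariants $H(1,0),K(0,1)$) degenerates — all of which is forced by the non-degeneracy of $P$ and $P'$. A more conceptual route that avoids most of this bookkeeping is to lift $P$ and $P'$ to extensive BS-Kœnigs nets $\hat P,\hat P'\colon\Sigma_{2,2}\to\RP^4$ that still agree away from $(2,2)$ (using Lemma~\ref{lem:lift}); by Lemma~\ref{lem:koenigslift} both are contained in the common hyperplane $U_1$ spanned by the even-parity non-corner vertices, so $\hat P'$ is a BS-Kœnigs net if and only if $\hat P'(2,2)\in U_1\cap\Pi$, and one checks $U_1\cap\Pi=\hat P(1,1)\vee\hat P(2,2)$ — the only delicate point being that $\Pi\not\subseteq U_1$, which is again equivalent to $\mathcal E\not\equiv 0$ — before projecting back down to $P'$.
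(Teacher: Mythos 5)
Your proof is correct, and your primary route is genuinely different from the paper's; in fact the ``more conceptual alternative'' you sketch at the end \emph{is} the paper's proof. The paper lifts $P$ and $P'$ to extensive nets in $\RP^4$, invokes Lemma~\ref{lem:koenigslift} to translate the BS-K{\oe}nigs condition into $P'(2,2)$ lying in the $3$-space $F=P(0,0)\vee P(1,1)\vee P(2,0)\vee P(0,2)$, and observes that $F$ meets the face plane $E=P(1,1)\vee P(2,1)\vee P(2,2)$ in the line $P(1,1)\vee P(2,2)$; your one ``delicate point'' ($E\not\subset F$) reduces, by a short coordinate computation, to the fact that $P(2,1)\in F$ would force $P(2,0),P(1,1),P(2,1)$ to be collinear, contradicting non-degeneracy. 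Your main argument instead works directly with the single cross-ratio identity at $(1,0)$ and identifies the solution locus as the degenerate conic $\ell_0\cup\ell_D$. This is more computational, but it is also more elementary (it avoids the two-hyperplane characterization and hence \cite[Thm 2.27]{BS2008DDGbook}), and it transparently exhibits the spurious component $\ell_D$ and why non-degeneracy of $P'$ removes it -- information the paper's three-line argument hides. One sentence needs repair: in showing $\mathcal E\not\equiv 0$ you assert that the ratio $H(1,1)/K(1,1)$ ``is constant along every line through $P(2,1)$ and along every line through $P(1,2)$, hence globally constant,'' which is not what you mean (the ratio is not constant along those lines). The intended argument is that $H(1,1)$ is constant along lines through $P(2,1)$ and $K(1,1)$ along lines through $P(1,2)$, so global constancy of the ratio would force $H(1,1)$ to be constant on all of $\Pi$; this contradicts the facts that $P'_1(1,1)$ sweeps the whole line $P(1,1)\vee P(1,2)$ and that the cross-ratio is a non-constant M\"obius function of that point (since $P(1,1)$, $P(1,2)$, $P_1(0,1)$ are pairwise distinct by non-degeneracy of $P$). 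With that sentence corrected, the argument is complete.
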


\begin{proof}
	We assume that $P$ is extensive, otherwise we consider a lift of $P$. Therefore, $P$ and $P'$ span $\RP^4$. As $P$ and $P'$ are Q-nets both $P(2,2)$ and $P'(2,2)$ are contained in the plane
	\begin{align}
		E = P(1,1) \vee P(2,1) \vee P(2,2).
	\end{align}
	
	By Lemma~\ref{lem:koenigslift}, $P'$ is a BS-Kœnigs net if and only if $P'(2,2)$ is contained in the 3-space
	\begin{align}
		F = P(0,0) \vee P(1,1) \vee P(2,0) \vee P(0,2).
	\end{align}
	Also by Lemma~\ref{lem:koenigslift}, $P(2,2)$ is contained in $F$ because $P$ is a BS-Kœnigs net. So, the intersection $E\cap F$ is the line joining $P(1,1)$ and $P(2,2)$.
\end{proof}

Informally, Lemma~\ref{lemma:BSKoenigsline} states that if we know a $\Sigma_{2,2}$ patch of a BS-Kœnigs net $P$ except for $P(2,2)$, then all the possible values of $P(2,2)$ are on a line that passes through $P(1,1)$.

Consider a Q-net $P^0$ defined on a horizontal and a vertical strip of quads (Figure~\ref{fig:boundarydata}, left), that is
\begin{align}
	P^0 \colon (\Sigma_1 \times \Z) \cup (\Z \times \Sigma_1) \to \RP^n.
\end{align} 
If we want to find a BS-Kœnigs net $P$ defined on all of $\Z^2$ that coincides with $P^0$ wherever $P^0$ is defined, Lemma~\ref{lemma:BSKoenigsline} implies that there is one free parameter per additional quad. The next lemma enables us to refine this extension procedure to obtain BS-Kœnigs nets with terminating Laplace sequences.

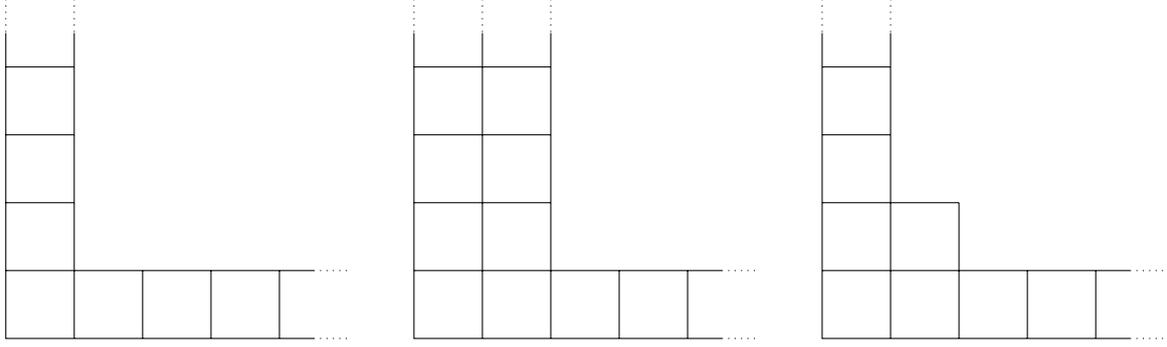
\begin{figure}
	\centering
	\begin{tikzpicture}[scale=0.9]
		\draw[-]
			(0,4.5) -- (0,0) -- (4.5,0)
			(0,1) -- (4.5,1)	(1,0) -- (1,4.5)
			(0,2) -- (1,2) (0,3) -- (1,3) (0,4) -- (1,4)
			(2,0) -- (2,1) (3,0) -- (3,1) (4,0) -- (4,1)
		;
		\draw[dotted]
			(0,5) -- (0,4.5) (1,5) -- (1,4.5)
			(4.5,0) -- (5,0) (4.5,1) -- (5,1)
		;		
	\end{tikzpicture}
	\hspace{6mm}
	\begin{tikzpicture}[scale=0.9]
		\draw[-]
			(0,4.5) -- (0,0) -- (4.5,0)
			(0,1) -- (4.5,1) (1,0) -- (1,4.5)
			(2,0) -- (2,4.5)
			(0,2) -- (2,2) (0,3) -- (2,3) (0,4) -- (2,4)
			(3,0) -- (3,1) (4,0) -- (4,1)
		;
		\draw[dotted]
			(0,5) -- (0,4.5) (1,5) -- (1,4.5) (2,5) -- (2,4.5)
			(4.5,0) -- (5,0) (4.5,1) -- (5,1)
		;		
	\end{tikzpicture}
	\hspace{6mm}
	\begin{tikzpicture}[scale=0.9]
		\draw[-]
			(0,4.5) -- (0,0) -- (4.5,0)
			(0,1) -- (4.5,1)	(1,0) -- (1,4.5)
			(0,2) -- (2,2) (0,3) -- (1,3) (0,4) -- (1,4)
			(2,0) -- (2,2) (3,0) -- (3,1) (4,0) -- (4,1)
		;
		\draw[dotted]
			(0,5) -- (0,4.5) (1,5) -- (1,4.5)
			(4.5,0) -- (5,0) (4.5,1) -- (5,1)
		;		
	\end{tikzpicture}
	\caption{Left: boundary data to construct a BS-Kœnigs net $P$, one degree of freedom remains for each non-boundary point. Center: boundary data if $P$ is additionally such that $P_2$ is Laplace degenerate, no further degrees of freedom remain. Right: boundary data if $P$ is additionally such that $P_{-2}$ is also Laplace degenerate. }
	\label{fig:boundarydata}
\end{figure}

\begin{lemma}\label{lem:BSKoenigsPmLaplacedeg}
	Consider a BS-Kœnigs net $P\colon \Sigma_{m+1,m} \to \RP^n$ with $m\geq 2$, and such that $P_{m}$ exists. Then, there is a unique BS-Kœnigs net $P': \Sigma_{m+1,m} \to \RP^n$ that agrees with $P$ everywhere except possibly on $(m+1,m)$, such that $P'_{m}$ is Laplace degenerate.
\end{lemma}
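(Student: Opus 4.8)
The plan is to reduce to the extensive case and then translate the two requirements on $P'$ — being a BS-K{\oe}nigs net, and having $P'_m$ Laplace degenerate — into two incidence conditions on the single point $P'(m+1,m)$, which together will pin it down uniquely. If $P_m$ is already Laplace degenerate we take $P'=P$; so assume it is not. By Lemma~\ref{lem:lift} we may pass to a lift and assume $P\colon\Sigma_{m+1,m}\to\RP^{2m+1}$ is extensive (lifts of BS-K{\oe}nigs nets are BS-K{\oe}nigs nets, a point modification of $P$ corresponds to one of the lift, and Laplace degeneracy is compatible with the projection). Write $\pav{P}(k):=\bigvee_{\ell=0}^{m}P(k,\ell)$ for the $k$-th column space, which is $m$-dimensional. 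By Corollary~\ref{cor:Laplaceterminatemsteps} (equivalently Lemma~\ref{lem:expllaplacetransform}) the two points of $P_m$ are $P_m(0,0)=\bigcap_{k=0}^{m}\pav{P}(k)=:A$ and $P_m(1,0)=\bigcap_{k=1}^{m+1}\pav{P}(k)$; in particular $A$ lies on the line $W:=\bigcap_{k=1}^{m}\pav{P}(k)$. Since $P'$ differs from $P$ only at $(m+1,m)$, one has $\pav{P'}(k)=\pav{P}(k)$ for $k\le m$ and $\pav{P'}(m+1)=V\vee P'(m+1,m)$ with $V:=\bigvee_{\ell=0}^{m-1}P(m+1,\ell)$ of dimension $m-1$; hence $P'_m(0,0)=A$ for every admissible choice of $P'(m+1,m)$.

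\textbf{The two conditions.} First, BS-K{\oe}nigs-ness is equivalent to a condition on each $\Sigma_{2,2}$-subpatch (the characterization via \cite[Thm.~2.27]{BS2008DDGbook} used in the proof of Lemma~\ref{lem:koenigslift}); since $P$ and $P'$ differ only at the corner $(m+1,m)$, which belongs to a unique $\Sigma_{2,2}$-subpatch — the one on columns $m-1,m,m+1$ and rows $m-2,m-1,m$, where $m\ge2$ is used — the net $P'$ is a BS-K{\oe}nigs net if and only if that subpatch is. By Lemma~\ref{lemma:BSKoenigsline} this holds exactly when
\[
P'(m+1,m)\in\ell:=P(m,m-1)\vee P(m+1,m).
\]
Second, $P'_m$ is Laplace degenerate precisely when $P'_m(1,0)=P'_m(0,0)=A$. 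As $P'_m(1,0)\in\bigcap_{k=1}^{m+1}\pav{P'}(k)=W\cap\pav{P'}(m+1)$ and $A\in W$, this forces $A\in\pav{P'}(m+1)=V\vee P'(m+1,m)$, i.e.
\[
P'(m+1,m)\in V\vee A,
\]
an $m$-dimensional subspace (note $A\notin V$, since $A\in V$ would already give $A\in\pav{P}(m+1)$ and make $P_m$ Laplace degenerate).

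\textbf{Uniqueness and existence by a dimension count.} Both $\ell$ and $V\vee A$ lie in $G:=\pav{P}(m)\vee\pav{P}(m+1)$, which is $(m+1)$-dimensional because columns $m,m+1$ form an extensive $\Sigma_{1,m}$-patch: indeed $P(m,m-1)\in\pav{P}(m)$, $P(m+1,m)\in\pav{P}(m+1)$, $V\subseteq\pav{P}(m+1)$, and $A=\bigcap_k\pav{P}(k)\subseteq\pav{P}(m)$. Inside the $(m+1)$-dimensional $G$, the line $\ell$ meets the hyperplane $V\vee A$; moreover $\ell\not\subseteq V\vee A$, since $P(m+1,m)\in V\vee A$ would give $\pav{P}(m+1)=V\vee P(m+1,m)\subseteq V\vee A$, hence $A\in\pav{P}(m+1)$ and again $P_m$ Laplace degenerate. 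Thus $\ell\cap(V\vee A)$ is a single point $Q^{\ast}$, which is the unique possibility for $P'(m+1,m)$; setting $P'(m+1,m):=Q^{\ast}$ one checks (using extensivity) that $P'$ is a non-degenerate BS-K{\oe}nigs net with $W\cap\pav{P'}(m+1)=\{A\}$, so $P'_m(1,0)=A$ and $P'_m$ is Laplace degenerate. This establishes existence and uniqueness.

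\textbf{Main obstacle.} I expect the delicate points to be (i) justifying that BS-K{\oe}nigs-ness genuinely localizes to the single corner $\Sigma_{2,2}$-patch and that Lemma~\ref{lemma:BSKoenigsline} applies there verbatim, and (ii) the dimension bookkeeping in $G$, i.e.\ ensuring $\ell$ and $V\vee A$ are in general position so that they meet in exactly one point — which amounts to excluding the degeneracies $A\in V$, $\ell\cap V\ne\emptyset$, and $P(m+1,m)\in V\vee A$, all ruled out by the standing hypothesis that $P_m$ exists and is not Laplace degenerate (together with the non-degeneracy tacitly required of $P'$).
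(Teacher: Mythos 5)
Your proposal is correct and follows essentially the same route as the paper: lift to an extensive net, translate Laplace degeneracy of $P'_m$ via Corollary~\ref{cor:Laplaceterminatemsteps} into membership of $P'(m+1,m)$ in the $m$-space $Q=V\vee A$, translate the BS-K{\oe}nigs condition via Lemma~\ref{lemma:BSKoenigsline} into membership in the line $\ell$, and intersect inside the $(m+1)$-dimensional join $\pav P(m)\vee\pav P(m+1)$. Your dimension bookkeeping is in fact slightly more careful than the paper's, since you verify $\ell\not\subseteq V\vee A$ directly (the paper only argues $\ell\not\subseteq\pav P(m)$), but the argument is the same.
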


\proof{
	We assume that $P$ is extensive. Otherwise we consider a lift of $P$, prove the claim for the lift, and then project back down. For $P'_m$ to be Laplace degenerate, Corollary~\ref{cor:Laplaceterminatemsteps} implies that $P'(m+1,m)$ needs to be in the $m$-space $Q$ that is the join of $P_m(0,0)$ and 
	\begin{align}
		P(m+1,0), P(m+1,1), \dots, P(m+1,m-1).
	\end{align}
	On the other hand, Lemma~\ref{lemma:BSKoenigsline} shows that $P'(m+1,m)$ is on the line $\ell$ through $P(m+1,m)$ and $P(m,m-1)$. The line $\ell$ is in the join of $\pav P(m)$ and $\pav P(m+1)$, which is an $(m+1)$-dimensional space. Moreover, $\ell$ is not contained in $\pav P(m)$, as otherwise $P(m+1,m)$ would also be in $\pav P(m)$, which in turn would be a contradiction with the extensivity of $P$. Therefore $\ell$ and the $m$-space $Q$ intersect in a point, which is $P'_m(m+1,m)$ -- which is therefore unique.\qed
}

In particular, consider a BS-Kœnigs net $P^0$ defined on the following ``broader'' horizontal and vertical strips of quads (see Figure~\ref{fig:boundarydata}, center, for the case $m=2$):
\begin{align}
	P^0 \colon (\Sigma_{m} \times \Z) \cup (\Z \times \Sigma_{m-1}) \to \RP^n. \label{eq:initialdatabslaplace}
\end{align}
Such a Q-net $P^0$ may be constructed using Lemma~\ref{lemma:BSKoenigsline}.
If we want to find a BS-Kœnigs net $P$ defined on all of $\Z^2$ that coincides with $P^0$ wherever $P^0$ is defined \emph{and} such that $P_m$ is Laplace degenerate, Lemma~\ref{lem:BSKoenigsPmLaplacedeg} implies that there is \emph{no} free parameter per additional quad. Then, $P$ is uniquely defined by the data on the broader strips.

\begin{lemma}\label{lem:BSKoenigsDoubleLaplacedeg}
	Consider a BS-Kœnigs net $P \colon \Sigma_{m+1,m+1} \to \RP^n$ with $m\geq2$, and such that $P_{m}$ exists. Then, there is a unique BS-Kœnigs net $P': \Sigma_{m+1,m+1} \to \RP^n$ that agrees with $P$ everywhere except possibly on $(m+1,m)$, $(m,m+1)$ and $(m+1,m+1)$, such that $P'_{m}$ and $P'_{-m}$ are Laplace degenerate.
\end{lemma}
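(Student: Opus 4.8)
The plan is to fix the three adjustable points $P'(m+1,m)$, $P'(m,m+1)$, $P'(m+1,m+1)$ one at a time: Lemma~\ref{lem:BSKoenigsPmLaplacedeg} (and its transpose) forces Laplace degeneracy along suitable boundary strips, the argument of Lemma~\ref{lemma:BSKoenigsline} completes the corner so that $P'$ remains a BS-Kœnigs net, and Lemma~\ref{lem:Koenigslaplacedegsymmetric} upgrades a one-sided backward degeneracy to a full one at the very end. As always we may assume $P$ is extensive, replacing it by a lift (Lemma~\ref{lem:lift}) otherwise and projecting down afterwards, since lifts of BS-Kœnigs nets are BS-Kœnigs nets, the lines involved project to lines, and both the BS-Kœnigs property and Laplace degeneracy are projection invariant.

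\emph{The two boundary points.} The restriction of $P$ to $\{0,\dots,m+1\}\times\{0,\dots,m\}$ is a BS-Kœnigs net of shape $\Sigma_{m+1,m}$ whose $m$-th forward transform exists, so Lemma~\ref{lem:BSKoenigsPmLaplacedeg} yields a unique value $a$ at $(m+1,m)$ for which this restriction is a BS-Kœnigs net with Laplace degenerate $m$-th forward transform; in the full grid this is the equality $P'_m(0,0)=P'_m(1,0)$. Transposing via $R(i,j):=P(j,i)$ — which preserves the BS-Kœnigs property by the symmetric characterization in Lemma~\ref{lem:koenigslift} and swaps $\mathcal L_+$ with $\mathcal L_-$ — turns $\{0,\dots,m\}\times\{0,\dots,m+1\}$ into a $\Sigma_{m+1,m}$-shaped BS-Kœnigs net, so the same lemma fixes a unique value $b$ at $(m,m+1)$ giving $P'_{-m}(0,0)=P'_{-m}(0,1)$. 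The first strip omits $(m,m+1)$ and the second omits $(m+1,m)$, so $a$ and $b$ are determined independently of one another and of the corner.

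\emph{The corner.} Let $\bar P$ agree with $P$ except with $a$ and $b$ installed, leaving the value at $(m+1,m+1)$ free. A direct combinatorial check shows that every Q-net and every BS-Kœnigs condition inside $\Sigma_{m+1,m+1}$ not involving $(m+1,m+1)$ already holds for $\bar P$: the only non-trivial one using $a$ is the relation $P(i+2,j+2)\in P(i,j)\vee P(i+2,j)\vee P(i,j+2)\vee P(i+1,j+1)$ with outer corner $(m+1,m)$, which holds because the first strip is a BS-Kœnigs net (and similarly for $b$), while every relation that would involve both $a$ and $b$ has a corner outside $\Sigma_{m+1,m+1}$. Hence $\bar P$ is a BS-Kœnigs net with the point $(m+1,m+1)$ deleted, and the argument of Lemma~\ref{lemma:BSKoenigsline} shows that the admissible values of $P'(m+1,m+1)$ form the line $\ell=E\cap F$, with $E=P(m,m)\vee a\vee b$ the plane of the corner face and $F=P(m-1,m-1)\vee P(m+1,m-1)\vee P(m-1,m+1)\vee P(m,m)$; extensivity gives $\dim(E\vee F)=4$, so $\ell$ is a line through $P(m,m)$. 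Installing any point of $\ell$ at the corner makes the strip $\{0,\dots,m+1\}\times\{1,\dots,m+1\}$ a $\Sigma_{m+1,m}$-shaped BS-Kœnigs net; a final application of Lemma~\ref{lem:BSKoenigsPmLaplacedeg} to it — which slides the corner along a line through $P(m,m)$, hence along $\ell$ — produces the unique value $c\in\ell$ with Laplace degenerate $m$-th forward transform, i.e. $P'_m(0,1)=P'_m(1,1)$. Since $c\in\ell$, the resulting net $P'$ (with $a,b,c$ installed) is a BS-Kœnigs net.

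\emph{Conclusion, uniqueness, and the hard part.} By construction $P'_m$ is Laplace degenerate and $P'_{-m}(0,0)=P'_{-m}(0,1)$, so Lemma~\ref{lem:Koenigslaplacedegsymmetric} applied to $P'$ gives $P'_{-m}(1,0)=P'_{-m}(1,1)$ as well, whence $P'_{-m}$ is Laplace degenerate. Conversely, any $P'$ with the asserted properties must realize these same three values, by the uniqueness clauses of the three uses of Lemma~\ref{lem:BSKoenigsPmLaplacedeg} (for $a$, for $b$ after transposition, for $c$ after passing to the upper strip), so $P'$ is unique. I expect the main obstacle to be the middle step: one must check carefully that installing the two boundary values at once still leaves a well-posed ``BS-Kœnigs net with one point deleted'' near the corner (so that the argument of Lemma~\ref{lemma:BSKoenigsline} applies), and that its completion line $\ell$ is exactly the line along which Lemma~\ref{lem:BSKoenigsPmLaplacedeg} moves the corner of the upper strip — the reason a single point can be made to discharge both tasks. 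The remaining bookkeeping, that the various one-sided Laplace transforms on the sub-patches exist, is routine given that $P_m$ exists, up to a standard genericity argument.
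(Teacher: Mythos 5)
Your proof is correct and follows essentially the same route as the paper's: fix $(m+1,m)$ and $(m,m+1)$ by Lemma~\ref{lem:BSKoenigsPmLaplacedeg} and its transposed/backwards version, fix the corner $(m+1,m+1)$ by a third application of that lemma to the upper strip, and then invoke Lemma~\ref{lem:Koenigslaplacedegsymmetric} to obtain the remaining identity $P'_{-m}(1,0)=P'_{-m}(1,1)$. The extra care you take at the corner step -- checking that the strip with $a$ and $b$ installed is still completable to a BS-Kœnigs net along a line through $P(m,m)$, so that the third application of Lemma~\ref{lem:BSKoenigsPmLaplacedeg} is legitimate -- addresses a genuine subtlety that the paper's terse proof passes over silently.
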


\proof{
	We assume that $P$ is extensive. Otherwise we consider a lift of $P$, prove the claim, and project back down. Lemma~\ref{lem:BSKoenigsPmLaplacedeg} to shows that $P'(m+1,m)$ is uniquely determined from the requirement that $P'_m(0,0) = P'_m(1,0)$, which follows from the requirement that $P'_m$ is Laplace degenerate. The backwards version of Lemma~\ref{lem:BSKoenigsPmLaplacedeg} shows that $P'(m,m+1)$ is uniquely determined from the requirement that $P'_{-m}(0,0) = P'_{-m}(0,1)$. Subsequently, employing Lemma~\ref{lem:BSKoenigsPmLaplacedeg} again shows that $P'(m+1,m+1)$ is uniquely determined from the requirement that $P'_m(0,1) = P'_m(1,1)$. Finally, Lemma~\ref{lem:Koenigslaplacedegsymmetric} shows that $P'_{-m}(1,0) = P'_{-m}(1,1)$. Thus, $P'_{-m}$ is also Laplace degenerate, which proves the claim.\qed
}

Consider a BS-Kœnigs net $P^0$ defined on the following horizontal and vertical strips of quads and one more point (Figure~\ref{fig:boundarydata}, right):
\begin{align}
	P^0 \colon (\Sigma_{m-1} \times \Z) \cup (\Z \times \Sigma_{m-1}) \cup \{(m,m)\} \to \RP^n. \label{eq:initialdatadoublebs}
\end{align} 
If we want to find a BS-Kœnigs net $P$ defined on all of $\Z^2$ that coincides with $P^0$ wherever $P^0$ is defined and such that \emph{both} $P_m$ and $P_{-m}$ are Laplace degenerate, Lemma~\ref{lem:BSKoenigsDoubleLaplacedeg} implies that there is \emph{no} free parameter per additional quad. Therefore $P$ is uniquely defined by the data given by $P^0$. Since we consider this one of our main results, let us put it into a theorem.

\begin{theorem}\label{th:bskoenigsdoublelaplace}
	For $m\geq 2$, BS-Kœnigs nets $P$ such that both $P_m$ and $P_{-m}$ are Laplace degenerate exist and are determined by initial data as given in \eqref{eq:initialdatadoublebs}.
\end{theorem}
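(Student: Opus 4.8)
The plan is to treat this theorem as a ``globalization'' of Lemma~\ref{lem:BSKoenigsDoubleLaplacedeg}: I would extend $P^0$ from the domain of \eqref{eq:initialdatadoublebs} to all of $\Z^2$ one quad at a time, showing that at each step the new vertex is \emph{forced} by the requirement that $P_m$ and $P_{-m}$ be Laplace degenerate, while this double degeneracy is preserved as the domain grows. Granting that, uniqueness of $P$ given the data in \eqref{eq:initialdatadoublebs} is immediate from the uniqueness clauses of Lemmas~\ref{lem:BSKoenigsPmLaplacedeg} and~\ref{lem:BSKoenigsDoubleLaplacedeg}; and since the only hypotheses carried along the construction are genericity conditions of the form ``$P_m$ exists'' (and ``$P_{-m}$ exists''), which are open, one can choose $P^0$ so that the construction never breaks down --- which gives existence and shows the class of such BS-K{\oe}nigs nets is non-empty. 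Throughout one may assume $P$ is extensive, since the lemmas invoked below already reduce to that case via Lemma~\ref{lem:lift}.

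The first step is to pin down $P$ on the single $\Sigma_{m+1,m+1}$ patch $\{0,\dots,m+1\}^2$. By Lemma~\ref{lemma:BSKoenigsline} the restriction of $P^0$ to this patch --- a BS-K{\oe}nigs net on the thick cross together with the vertex $(m,m)$ --- completes to \emph{some} BS-K{\oe}nigs net $P$ on it (generically with $P_m$ well-defined), and Lemma~\ref{lem:BSKoenigsDoubleLaplacedeg} then replaces the three remaining vertices $(m+1,m)$, $(m,m+1)$, $(m+1,m+1)$ by the unique values for which $P_m$ and $P_{-m}$ are Laplace degenerate; as every completion agrees with $P^0$ off those three vertices, this pins $P$ down there. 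The remaining vertices of $\Z^2$ lie outside the thick cross, and I would fill them by propagating this seed outward. Whenever a new vertex is reached by adjoining one more column to an already-determined patch, I would take the translate of a $\Sigma_{m+1,m}$ window having that vertex as its far corner and all other vertices already known; Lemma~\ref{lem:BSKoenigsPmLaplacedeg} (in the forward direction, and in its mirror image for columns on the other side) then forces the vertex by the single scalar equation saying that $P_m$ is Laplace degenerate on that window, and running through the whole new column yields Laplace degeneracy of $P_m$ on the enlarged patch; symmetrically for adjoining a row, using the backward form of Lemma~\ref{lem:BSKoenigsPmLaplacedeg} and the degeneracy of $P_{-m}$. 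To recover the \emph{complementary} degeneracy on each newly adjoined slice I would apply Lemma~\ref{lem:Koenigslaplacedegsymmetric} to the $\Sigma_{m+1,m+1}$ windows straddling it: $P_m$ is already Laplace degenerate there and the backward degeneracy holds on the old slice, hence it holds on the new one. Carrying this out in all four directions and taking the union of the patches produces a BS-K{\oe}nigs net $P$ on $\Z^2$ with $P_m$ and $P_{-m}$ Laplace degenerate.

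The step I expect to be the main obstacle is \emph{consistency}. One has to check that the different $\Sigma_{m+1,m}$ and $\Sigma_{m+1,m+1}$ windows touching a given vertex never force conflicting values; that imposing a single scalar equation per new quad --- rather than the full system of Laplace-degeneracy equations on every enlarged patch --- already produces the full degeneracy; and, in particular, that near the inner corner of each region being filled no extra degree of freedom survives beyond the one pinned down by the vertex $(m,m)$. The coupling supplied by Lemma~\ref{lem:Koenigslaplacedegsymmetric} is exactly what should make this go through, so the residual work should be largely organizational --- careful index bookkeeping for the nested windows in each direction --- together with making precise the (open) genericity conditions under which all the intermediate forward and backward Laplace transforms used along the way are well-defined.
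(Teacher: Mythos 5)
Your proposal is correct and follows essentially the same route as the paper: the paper's own argument is precisely the paragraph surrounding \eqref{eq:initialdatadoublebs}, which seeds the construction with Lemma~\ref{lem:BSKoenigsDoubleLaplacedeg} on the central $\Sigma_{m+1,m+1}$ patch and then observes (via Lemma~\ref{lem:BSKoenigsPmLaplacedeg} and the propagation of Lemma~\ref{lem:Koenigslaplacedegsymmetric}) that no free parameter remains per additional quad. Your explicit discussion of the consistency of overlapping windows is in fact more careful than the paper's, which leaves that point implicit.
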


The reason that we have excluded the $m=1$ case so far is that this case is slightly different. In particular, Lemma~\ref{lem:BSKoenigsPmLaplacedeg} and Lemma~\ref{lem:BSKoenigsDoubleLaplacedeg} do not hold for $m=1$. The reason for this is that there is no BS-Kœnigs condition on $\Sigma_{1,1}$ patches. On the other hand, the $m=1$ case was investigated in \cite{KMT2023conenets}. There, the authors showed that instead Theorem~\ref{th:bskoenigsdoublelaplace} holds for initial data 
\begin{align}
	P^0 \colon (\Sigma_{1} \times \Z) \cup (\Z \times \Sigma_{1}) \to \RP^n,
\end{align} 
which is not the same as the initial data in Equation~\eqref{eq:initialdatadoublebs} for $m=1$.
Additionally, they showed that any Q-net $P$ such that both $P_1$ and $P_{-1}$ are Laplace degenerate is automatically a BS-Kœnigs net. This is the reason that the initial data differs from the $m \geq 2$ cases.

Comparing Equation~\eqref{eq:initialdatabslaplace} and Equation~\eqref{eq:initialdatadoublebs}, we see that there are more degrees of freedom if we only require that $P_{m}$ is Laplace degenerate, compared to if we require that both $P_{m}$ and $P_{-m}$ are Laplace degenerate. This shows that if $P$ is a BS-Kœnigs net such that $P_{m}$ is Laplace degenerate, then generically $P_{-m}$ is not Laplace degenerate. Rather, the generic case is as in Theorem~\ref{thm:BSandDKoenigslaplacedegunsymmetric}: $P_{-m-1}$ is Laplace degenerate if $P_m$ is Laplace degenerate.

\begin{remark}
    One may also ask whether there are D-Kœnigs nets $D$ -- instead of BS-Kœnigs nets -- such that both $D_m$ and $D_{-m}$ are Laplace degenerate. The existence of these follows readily from Theorem~\ref{th:bskoenigsdoublelaplace}. Indeed, by Lemma~\ref{lem:BSKoenigsandDiagonalIntersectionNetisDKoenigs}, the diagonal intersection net $D$ of a BS-Kœnigs net $P$ is a D-Kœnigs net, and Proposition~\ref{prop:bsdlaplace} shows that $D_m$ is Laplace degenerate if and only if $P_m$ is Laplace degenerate. Therefore, the diagonal intersection net $D$ of a BS-Kœnigs net with Laplace degenerate $P_m$ and $P_{-m}$  also has Laplace degenerate $D_m$ and $D_{-m}$.
\end{remark}

\section{Concluding remarks}\label{sec:outlook}

\subsection{Discrete isothermic surfaces}

Let us briefly discuss how our results may apply to so called \emph{discrete isothermic surfaces} \cite{bpdisosurfaces}. Although the original characterization is different, one can show  \cite{BS2008DDGbook} that discrete isothermic surfaces are BS-Kœnigs nets that are also \emph{circular nets}. A circular net is a Q-net $P$ such that the points 
\begin{align}
    P(i,j), P(i+1,j), P(i,+1,j+1), P(i,j+1),    
\end{align}
are contained in a circle for all $(i,j)\in \Z^2$.
Let $P$ be a discrete isothermic surface in $\R^3$, and assume that $P_{-m}$ is Laplace degenerate.
From our results for BS-Kœnigs nets (specifically Theorem~\ref{thm:BSandDKoenigslaplacedegunsymmetric}), it follows that if $P_{m+1}$ exists, then $P_{m+1}$ is Laplace degenerate as well. On the other hand, from similar results for circular nets \cite[Theorem~3.5]{bobenkofairley2023circularnets} it follows that if $P_{m+3}$ were to exist, then $P_{m+3}$ would be Goursat degenerate. Thus, our result is ``stronger'' in the sense that our results imply an earlier termination of the Laplace sequence for discrete isothermic surfaces. However, since discrete isothermic nets are a special case of BS-Kœnigs nets, it is possible that the Laplace sequence may be even shorter. We consider this is an interesting open problem. 

Next, we want to consider an even more special case of isothermic surfaces, but first, we take a step back. Let us assume that $P$ is a circular net in $\R^3$ with one family of spherical curvature lines -- which is a circular net such that for all $j$ there is a sphere $\mathcal S(j)$ such that $P(i,j) \in \mathcal S(j)$. It turns out that an immediate consequence of this property is that $P_{-3}$ is Goursat degenerate \cite{bobenkofairley2023circularnets}. Then, a result of \cite{bobenkofairley2023circularnets} shows that $P_3$ is Laplace degenerate.

However, it was also shown in \cite{bobenkofairley2023circularnets}, that a better discretization of the corresponding smooth surfaces is obtained by considering the special case of circular nets with one family of spherical curvature lines such that $P_{2}$ is already Laplace degenerate (instead of $P_3$). 

Next, let us additionally assume that $P$ is a BS-Kœnigs net, that is $P$ is a discrete isothermic surface with one parameter family of spherical curvature lines, and such that $P_2$ is Laplace degenerate. Then, Theorem~\ref{thm:BSandDKoenigslaplacedegunsymmetric} implies that $P_{-3}$ is not just Goursat degenerate but also Laplace degenerate, and hence mixed type degenerate. Geometrically, this implies that all spheres $\mathcal S(j)$ belong to a 3-dimensional sphere pencil. That is, there is a sphere (or point, or imaginary sphere) $\mathcal X$, such that all spheres $\mathcal S(j)$ intersect $\mathcal X$ orthogonally.

Finally, from the viewpoint of smooth Kœnigs net it may make sense to put one more constraint on $P$, namely that $P_{-2}$ is already Laplace degenerate (instead of $P_{-3}$). Geometrically, this would imply that all spheres $\mathcal S(j)$ belong to a 2-dimensional sphere pencil. That is, there is a pair of points that is contained by all spheres $\mathcal S(j)$ (or the spheres all have a common orthogonal circle, or all the spheres touch a common line in a common point). However, it is not completely clear if such discrete isothermic surfaces can be constructed and how.

\begin{remark}
    There is also a recent paper \cite{hoffmannSzewieczek2024isothermic} investigating a different special class of discrete isothermic surfaces with a family of spherical curvature lines. We strongly suspect that these discrete surfaces also have terminating Laplace sequences, and also that the spheres $\mathcal S(j)$ all belong to a 2-dimensional sphere pencil.
\end{remark}

\begin{remark}
     Another possibility, which has not yet been investigated, would be to define discrete isothermic surfaces as circular nets that are also D-Kœnigs nets (instead of BS-Kœnigs nets). If $P_m$ Laplace degenerates for such a net, after how many steps $m'$ is $P_{-m'}$ Laplace degenerate?
\end{remark}

\subsection{CKP Darboux maps}

As mentioned in Remark~\ref{rem:cluster}, the Laplace invariants of a Q-net can be viewed as cluster variables, and Laplace transforms act as cluster mutations. There are two well known subvarieties (algebraic constraints) of cluster variables on $\Z^2$. The so called \emph{resistor subvariety} and the \emph{Ising subvariety}. Q-nets with cluster variables in the resistor subvariety are exactly the BS-Kœnigs nets (see~\cite{agprvrc}) and the D-Kœnigs nets (see \cite{athesis}), so the two types of Kœnigs nets that we investigated in the paper. This provokes the question: what about Q-nets with cluster variables in the Ising subvariety? These were shown in \cite{agprvrc} to be the so called \emph{CKP Darboux maps} \cite{schiefdarboux}. A Darboux map is a map $M$ defined on the edges of $\Z^2$, such that the four points around a quad of $\Z^2$ are contained in a line. Let $P$ be the restriction of $M$ to vertical edges, and $P_1$ the restriction of $M$ to horizontal edges. It is not hard to see that $P_1$ is indeed the Laplace transform of $P$. In this manner, a Darboux map defines a unique Laplace sequence. Moreover, it is known that the cluster variables in the Ising subvariety have a certain symmetry. Translated to Laplace invariants, this symmetry reads as $H_k = H_{-k}$ for all $k$, up to a shift of indices. Hence, if $P_m$ is Laplace degenerate, and if $P_{-m}$ exists, then $P_{-m}$ is also Laplace degenerate (due to Lemma~\ref{lem:Laplaceterminatemstepsalgebraic}).

\subsection{Periodic discrete Kœnigs nets}

The projection of a Laplace sequence to $\RP^1$ is a solution of the so called \emph{dSKP equation} \cite{athesis}. This allows to apply results on singularities of the dSKP equation \cite{adtmdskp, adtmdskpgeometry}. In particular, one gets the following statement: If $P$ is a $m$-periodic Q-net such that $P_{-1}$ is either Laplace degenerate (or Goursat degenerate), and if $P_{m-1}$ exists, then $P_{m-1}$ is also Laplace degenerate (or Goursat degenerate) -- a consequence of \cite[Theorem~1.6 with $p=1$]{adtmdskp}. It would be interesting to know what happens with terminating Laplace sequences for periodic discrete Kœnigs nets.

\subsection{Other singularities}

In this paper we dealt with Laplace sequences that terminated because the final Kœnigs net was Laplace or Goursat degenerate. However, a Kœnigs-net can also have local singularities. These cause the Laplace transformation to be undefined only in local patches of the Q-net. For example, it is possible that $P_k(i,j) = P_k(i+1,j)$ for only one pair $(i,j)$. We suspect that one such forward singularity is not enough to force a backwards singularity somewhere. If so, the question remains: how many local forward singularities are needed to force a backwards singularity?

\bibliographystyle{alpha}
\bibliography{references}

\end{document}